\let\eps\varepsilon
\newcommand{\N}{{\mathbb N}}
\newcommand{\R}{{\mathbb R}}
\newcommand{\dx}{\Delta x}
\newcommand{\dd}{{\mathrm d}}
\newcommand{\Z}{{\mathbb Z}}
\newcommand{\M}{{\mathbb M}}
\newcommand{\D}{{\mathcal D}}
\newcommand{\I}{{\mathcal I}}
\newcommand{\Tor}{{\mathbb T}}
\newcommand{\T}{{\mathcal T}}
\newcommand{\F}{\mathcal{F}}
\newcommand{\lla}{\left\langle}
\newcommand{\rra}{\right\rangle}
\newcolumntype{C}[1]{>{\centering\arraybackslash }b{#1}}
\newtheorem{theorem}{Theorem}
\newtheorem{lemma}[theorem]{Lemma}
\newtheorem{proposition}[theorem]{Proposition}
\newtheorem{remark}[theorem]{Remark}
\newtheorem{corollary}[theorem]{Corollary}
\newtheorem{definition}{Definition}
\begin{document}

\title[Finite volume scheme for nonlocal cross-diffusion]{Study of an entropy dissipating finite volume scheme for a nonlocal cross-diffusion system}

\author[M. Herda]{Maxime Herda}
\address{Inria, Univ. Lille, CNRS, UMR 8524--Laboratoire Paul Painlev\'e, 59000 Lille}
\email{maxime.herda@inria.fr}

\author[A. Zurek]{Antoine Zurek}
\address{Universit\'e de Technologie de Compi\`egne, LMAC, 60200 Compiègne, France}
\email{antoine.zurek@utc.fr}

\date{\today}

\begin{abstract}
In this paper we analyse a finite volume scheme for a nonlocal version of the Shigesada-Kawazaki-Teramoto (SKT) cross-diffusion system. We prove the existence of solutions to the scheme, derive qualitative properties of the solutions and prove its convergence. The proofs rely on a discrete entropy-dissipation inequality, discrete compactness arguments, and on the novel adaptation of the so-called duality method at the discrete level. Finally, thanks to numerical experiments, we investigate the influence of the nonlocality in the system: on convergence properties of the scheme, as an approximation of the local system and on the development of diffusive instabilities.

\smallskip

\textbf{Keywords: }Nonlocal cross-diffusion, finite volume schemes, entropy method, convergence.

\smallskip

\textbf{Mathematics Subject Classification: }65M08, 65M12, 35K51, 35Q92, 92D25.
\end{abstract}

\maketitle


\section{Introduction}

We are interested in the numerical discretization of the following nonlocal cross-diffusion system
\begin{align}\label{4.SKT1}
\partial_t u_1 - \Delta((d_1 + d_{11} \, \sigma_1 \ast u_1 + d_{12} \, \rho_1 \ast u_2 ) u_1) &= R_1(u_1,u_2),\\\label{4.SKT2}
\partial_t u_2 - \Delta((d_2 + d_{21} \, \rho_2 \ast u_1 + d_{22} \sigma_2 \ast u_2 ) u_2) &= R_2(u_1,u_2),
\end{align}
on a periodic domain $\Omega= \Tor^d$ ($d \leq 3$). For a given final time $T$ we denote the space time domain by  $Q_T = \Omega \times (0,T)$. The parameters $d_1$, $d_2$, $d_{11}$, $d_{12}$, $d_{21}$ and $d_{22}$ are some positive constants and $\rho_1$,  $\rho_2$, $\sigma_1$ and $\sigma_2$ are non-negative convolution kernels. System \eqref{4.SKT1}--\eqref{4.SKT2} is supplemented with initial conditions
\begin{align}\label{4.ic}
u_1(\cdot,0) = u_1^0(\cdot), \quad u_2(\cdot,0) = u_2^0(\cdot).
\end{align}
In the case where the convolution kernels are given by the Dirac measure $\rho_1 = \rho_2 = \sigma_1 = \sigma_2 = \delta_0$, the system coincides with the celebrated Shigesada, Kawasaki, and Teramoto (SKT) population model \cite{SKT79} which can describe segregation phenomena between competing species. It writes
\begin{align}\label{4.local.SKT1}
\partial_t u_1 - \Delta((d_1 + d_{11} \, u_1 + d_{12} \, u_2 ) u_1) &= R_1(u_1,u_2),\\\label{4.local.SKT2}
\partial_t u_2 - \Delta((d_2 + d_{21} \, u_1 + d_{22} \, u_2 ) u_2) &= R_2(u_1,u_2).
\end{align}

Nonlocal cross-diffusion systems appear naturally as a mean field type of limit of interacting many-particle systems. For instance, the model \eqref{4.SKT1}--\eqref{4.SKT2} was introduced in \cite{FM15} as the large population limit of a stochastic individual model. If these particle systems allow a precise description of the interactions between individuals, their numerical approximations are very time-consuming. Then, it is reasonable to investigate simpler macroscopic models. In this context we see nonlocal cross-diffusion models as intermediate models between individual based models and local cross-diffusion models. This interpretation has been mathematically justified in the literature, see \cite{CDHJ21,DM21,JPZ22,AM20}, where the derivation of some local cross-diffusion models from nonlocal models (some of them derived from microscopic models) are shown.

Besides, nonlocal cross-diffusion models can be more than a mathematical intermediate between two scales. Indeed, in population dynamics, they can model nonlocal sensing, as diffusion of a species is impacted by the population located (respectively to their position) on the support of the convolution kernels, see \cite{GHLP21,PL19}. In the model \eqref{4.SKT1}--\eqref{4.SKT2} assume for instance that $\rho_1$ is supported away from $0$. Then the resulting effect of the nonlocal cross-diffusion term is to enhance the diffusion of species $1$ when species $2$ is away, modeling for instance a hunting behavior in a predator-prey model. This could hardly be reproduced by local cross-diffusion terms. 

The ability of the nonlocal cross-diffusion terms to model the dynamics of some natural phenomena explain the use of such models in other contexts. They are for instance applied to describe cell sorting \cite{MT15,PBSG15}, tumour growth \cite{dtGC14}, opinion formation \cite{DMPW09} or interactions between spiking neurons \cite{BFFT12} (just to name a few). In particular, the development of reliable numerical methods to approximate the solutions of nonlocal cross-diffusion systems can enhance our understanding of the ``physical'' mechanisms described by them. As a by-product this could also help the development of efficient models describing complex phenomena.

Motivated by these reasons, this manuscript deals with the design and analysis of a robust numerical scheme for \eqref{4.SKT1}--\eqref{4.ic}. Our approach is inspired by the analysis performed at the continuous level in \cite{DM21,AM20}. In particular, in \cite{DM21} the authors show that there is a persisting entropy structure in the nonlocal case which yields a crucial \emph{a priori} estimate for the analysis of the model. This extends for instance the approach developed in \cite{Jue15,Jue16} in the local case. Indeed, it was shown that for the system  \eqref{4.SKT1}--\eqref{4.ic} without reaction terms $R_1 = R_2 = 0$ and under the following symmetry hypotheses on the convolution kernels
\begin{equation}\label{eq:hypoth}
\left\{
 \begin{aligned}
    &\rho_1(x) = \rho_2(-x)= \rho(x), \\
    &\sigma_1(x) = \sigma_1(-x),\\
    &\sigma_2(x) = \sigma_2(-x),
 \end{aligned}\right.
\end{equation}
that the following entropy functional
\begin{align*}
H(u_1,u_2) = \int_{\Omega} \frac{1}{d_{12}} \left[ u_1(\log(u_1)-1)+1 \right] \, \dd x + \int_{\Omega} \frac{1}{d_{21}} \left[ u_2(\log(u_2)-1)+1 \right] \, \dd x\,,
\end{align*}
is dissipated along solutions of \eqref{4.SKT1}--\eqref{4.SKT2}. More precisely one has
\begin{multline}\label{4.einonlocal.cont}
\frac{\mathrm{d}}{\mathrm{d}t}H(u_1,u_2) + \frac{2 d_{11}}{d_{12}} \int_{\Omega} \int_{\Omega} \sigma_1(y) \left(\sqrt{u_1(x-y)} \, \nabla \sqrt{u_1(x)} + \sqrt{u_1(x)} \nabla \sqrt{u_1(x-y)} \right)^2\, \mathrm{d}x\mathrm{d}y\\
+ \frac{2 d_{22}}{d_{21}} \int_{\Omega}\int_{\Omega} \sigma_2(y) \left(\sqrt{u_2(x-y)} \, \nabla \sqrt{u_2(x)} + \sqrt{u_2(x)} \nabla \sqrt{u_2(x-y)} \right)^2\, \mathrm{d}x\mathrm{d}y \\
+4 \frac{d_1}{d_{12}} \int_{\Omega} |\nabla \sqrt{u_1}|^2 \, \mathrm{d}x 
+ 4\frac{d_2}{d_{21}} \int_{\Omega} |\nabla \sqrt{u_2}|^2 +4 \int_{\Omega} \rho(y) \int_{\Omega} |\nabla \sqrt{u_1(x) u_2(x-y)}|^2 \, \mathrm{d}x \mathrm{d}y = 0\,.
\end{multline}
Observe that if the convolution kernels are given by the Dirac measure $\rho_1 = \rho_2 = \sigma_1 = \sigma_2 = \delta_0$, then 
\begin{multline*}
\frac{\mathrm{d}}{\mathrm{d}t}H(u_1,u_2) +2 \frac{d_{11}}{d_{12}} \int_{\Omega} |\nabla u_1|^2 \, \mathrm{d}x + 2 \frac{d_{22}}{d_{21}} \int_{\Omega} |\nabla u_2|^2 \, \mathrm{d}x \\+ 4 \frac{d_1}{d_{12}} \int_{\Omega} |\nabla \sqrt{u_1}|^2 \, \mathrm{d}x + 4\frac{d_2}{d_{21}} \int_{\Omega} |\nabla \sqrt{u_2}|^2 + 4 \int_{\Omega} |\nabla \sqrt{u_1 \, u_2}|^2 \, \mathrm{d}x = 0\,,
\end{multline*}
which was already known for the local SKT system (see \cite{CJ03,CJ06,GGJ03}).

The fact that \eqref{4.SKT1}--\eqref{4.SKT2} admits a Lyapunov functional is crucial for the study of the system. Indeed, in \cite{DM21}, the authors used \eqref{4.einonlocal.cont} together with the so-called duality method, see \cite{DLMT15,LM17,AM20}, in order to prove (assuming \eqref{eq:hypoth} and without reaction terms) the existence of distributional solutions to \eqref{4.SKT1}--\eqref{4.ic}.

\begin{definition}\label{def.distrib.solution}
Given $T> 0$, let $\rho_1$, $\rho_2$, $\sigma_1$ and $\sigma_2$ be some functions in $L^\infty(\Omega)$ and $u^0_1$ and $u^0_2$ be some initial functions in $L^1(\Omega)$. Then, we say that the measurable functions $u_1,u_2 : Q_T \to \R_+$ are distributional solutions to \eqref{4.SKT1}--\eqref{4.ic} if for every $\phi \in C^\infty_0(\Omega \times [0,T))$ it holds
\begin{align}\label{weak1}
 \int_{Q_T}\Big( u_1 \partial_t \phi +\left(d_1 u_1 + d_{11} \sigma_1 \ast u_1  u_1 + d_{12} \rho_1 \ast u_2  u_1 \right) \Delta \phi \Big) \mathrm{d}x\mathrm{d}t =- \int_{\Tor^d} u^0_1(x)  \phi(x,0) \mathrm{d}x,
\end{align}
and
\begin{align}\label{weak2}
 \int_{Q_T}\Big( u_2 \partial_t \phi +\left(d_2 u_2 + d_{21} \rho_2 \ast u_1  u_2 + d_{22} \sigma_2 \ast u_2  u_2 \right) \Delta \phi \Big) \mathrm{d}x\mathrm{d}t =- \int_{\Tor^d} u^0_2(x)  \phi(x,0) \mathrm{d}x.
\end{align}
\end{definition}

In this paper we propose and analyze a finite volume scheme for \eqref{4.SKT1}--\eqref{4.ic}. A particular focus is put on 
\begin{itemize}
 \item[(i)] the preservation of the entropy  dissipation property at the discrete level;
 \item[(ii)] the non-negativity of the solution;
 \item[(iii)] the possibility to use the scheme in both the nonlocal and local regimes.
\end{itemize}

In order to achieve these goals we will design a fully implicit two point flux approximation (TPFA) finite volume scheme. As in the study of some numerical schemes for local cross-diffusion systems, see for instance the following (non-exhaustive) list of contributions  \cite{ABR11,BPS22,CCGJ19,CB20,JZ20,SCS19}, the preservation of the entropy dissipation property at the discrete level is crucial. This ensures well-posedness and global stability in time \cite{ChHe20,FiHe17} as well as with respect to the choice of convolution kernels (see Theorem~\ref{thm.SKTexi}). Some of these methods are reminiscent of the second author's work in \cite{JZ20} concerning the study of a finite volume scheme for the local SKT system. Besides, we are able to obtain additional estimates on the solution (see Theorem~\ref{thm.prop}) by adapting the duality method (see \cite{DLMT15,LM17,AM20}) at the discrete level. This technique relies on the study of a discretized Kolmogorov equation, see Section \ref{sec.Kolmo}. The convergence of solutions of the numerical scheme towards distributional solutions in the sense of Definition \ref{def.distrib.solution} is shown in Theorem~\ref{thm.convSKT}. Let us mention that only $L^\infty$ regularity of the convolution kernels is required to obtain convergence of the scheme. With smoother kernels one additionally obtains local in time $L^\infty$ bounds on the discrete solution that are uniform in the mesh size (see Theorem~\ref{thm.prop}).

Let us notice that there already exists some works dealing with the design and the analysis of finite volume numerical schemes for nonlocal cross-diffusion systems. Indeed, in \cite{ABLS15,ABS15} the convergence of some semi-implicit TPFA finite volume schemes are proved. The convergence proofs are based (as in this work) on the adaptation at the discrete level of a Kruzhkov's compactness result \cite{Kru69} (see also \cite{Re30}) obtained in \cite{ABR11}. We mention \cite{BCPS20} where numerical experiments are shown to illustrate the formation of gaps for a class of nonlocal cross-diffusion systems. In this paper the authors applied an explicit in time finite
volume scheme first introduced in \cite{CCH15} and then extended for the multi-species case in \cite{CHS18}. This scheme is a positivity and entropy preserving method as shown in \cite{CHS18}. Finally, we also refer to \cite{CFS20}. In this contribution the convergence of a semi-discrete finite volume scheme is proved. This scheme is also positivity preserving which allows the authors to establish a discrete energy estimate.


In order to illustrate and complement the theoretical results, we present several numerical experiments in the last section of this paper. We compute the experimental order of convergence of the numerical method when the  mesh size goes to $0$ for various initial data and convolution kernels. Then, for a fixed mesh, we investigate the rate of convergence for different metrics of the discrete solutions of the nonlocal system towards solution of the local system when convolution kernels tends to Dirac measures. Finally, we perform simulations of the model with nonzero reaction terms with parameters chosen to describe a prey-predator system with either linear diffusion or non-local cross diffusion modelling hunting behavior. For these models we illustrate the persistence and the modification of Turing patterns in the presence of cross-diffusion. 

The paper is organized as follows, in Section \ref{sec.main} we introduce the scheme and state our main results. Section \ref{subsec.proofexiSKT} is concerned with the proof of existence of positive solutions to the scheme. We introduce the discrete Kolmogorov equation in Section \ref{sec.Kolmo}. Then we deduce from the study of this problem some qualitative properties satisfied by the solutions to our scheme in Section \ref{sec.proof.thm.prop}. Sections \ref{sec.conv} deals with the convergence of the scheme. Finally, in Section \ref{sec.num}, we discuss the implementation and show some numerical experiments in one and two space dimensions.

\section{Numerical scheme and main results}\label{sec.main}

The results of this paper apply to a periodic domain $\Omega = \prod_{i=1}^d \mathbb{R}/L_i\mathbb{Z}$, where $L_1,\dots,L_d>0$. However, for the sake of readability we will assume from now on that $d=1$ and $L_1 = 1$, namely $\Omega = \Tor = \mathbb{R}/\mathbb{Z}$. The generalization in higher dimensions on Cartesian grid is immediate by defining the scheme as the tensorization of the one dimensional scheme.

\subsection{Notations and definitions}

Let us define $N \geq 1$ and $\Delta x = 1/N$. A uniform mesh $\mathcal{T}$ of $\Tor$ consists in a finite sequence of cells denoted by
\[
K_i = (x_{i-\frac{1}{2}}, x_{i+\frac{1}{2}})\,, \quad i \in \I = \Z/N\Z.
\]
centered at $x_i=i\Delta x$ and with extremities $x_{i\pm\frac{1}{2}} = (i\pm\frac12)\Delta x$. For $T>0$ given, we define an integer $N_T$ and a time step $\Delta t= T/N_T$ and we introduce the sequence $(t_k)_{0\leq k \leq N_T}$ with $t_k = k \Delta t$. We denote by $\D$ a space-time discretization of $Q_T= \Tor \times (0,T)$ composed of a space discretization $\T$ of $\Tor$ and the values $(\Delta t, N_T)$.

Let us now introduce some discrete norms on the space of piecewise constant functions in space
\begin{align*}
\mathcal{H}_{\T} = \Bigg\lbrace w : \Tor  \to \R \, \, : \, \, w(x)= \sum_{i \in \I} w_i \mathbf{1}_{K_i}(x)  \Bigg\rbrace.
\end{align*}
For $p \in [1,\infty)$, we define the discrete $W^{1,p}$ seminorm and discrete $W^{1,p}$ norm on $\mathcal{H}_\T$ by
\begin{align*}
|w|_{1,p,\T} = \left(\sum_{i \in \I} \Delta x \left|\frac{w_{i+1}-w_i}{\Delta x}\right|^p\right)^{\frac{1}{p}}, \quad \|w\|_{1,p,\T} = |w|_{1,p,\T} + \|w\|_{L^p(\Tor)},
\end{align*}
where, for $p \in [1, \infty)$, the norm $\|\cdot\|_{L^p(\Tor)}$ denotes the usual $L^p(\Tor)$ norm. In the case $p=\infty$, we denote $\|\cdot\|_{L^\infty(\Tor)}$ the $L^\infty(\Tor)$ norm given by
\begin{align*}
\|w\|_{L^\infty(\Tor)} = \max_{i \in \I} |w_i|, \quad \forall w \in \mathcal{H}_\T.
\end{align*}
Let us also recall the definition of the space $BV(\Tor)$, see \cite{AFP00} for more details. A function $w \in L^1(\Tor)$ belongs to the space $BV(\Tor)$ if its total variation $TV(w)$ given by
\begin{align*}
TV(w) = \sup\left\lbrace \int_\Tor w(x) \, \partial_x \phi(x) \, \mathrm{d}x, \quad \phi \in C^1_c(\Tor), \quad |\phi(x)| \leq 1 \quad \forall x \in \Tor \right\rbrace,
\end{align*}
is finite. We endow the space $BV(\Tor)$ with the norm 
$$
	\|w\|_{BV(\Tor)} = \|w\|_{L^1(\Tor)} + TV(w), \quad \forall w \in BV(\Tor).
$$
In particular, we notice that for each function $w \in \mathcal{H}_\T \cap BV(\Tor)$ we have $\|w\|_{BV(\Tor)} = \|w\|_{1,1,\T}$.

Finally we introduce the space $\mathcal{H}_\D$ of piecewise constant in time functions with values in $\mathcal{H}_\T$,
\begin{align*}
\mathcal{H}_{\D} = \Bigg\lbrace w : \Tor \times [0,T]  \to \R \, \, : \, \, w(x,t)= \sum_{k=1}^{N_T} w^k(x) \mathbf{1}_{(t_{k-1},t_k]}(t)  \Bigg\rbrace.
\end{align*}
This space can be equipped, for $(p,q) \in [1,\infty)^2$, with the following discrete $L^q(0,T;W^{1,p}(\Tor))$ norm
\begin{align*}
\left(\sum_{k=1}^{N_T} \Delta t \, \|w\|^q_{1,p,\T} \right)^{\frac{1}{q}} \quad \forall w \in \mathcal{H}_\D,
\end{align*}
or with the $L^q(0,T;L^p(\Tor))$ norm
\begin{align*}
\left(\sum_{k=1}^{N_T} \Delta t \, \|w\|^q_{L^p(\Tor)} \right)^{\frac{1}{q}}, \quad \forall w \in \mathcal{H}_\D.
\end{align*}
In particular, in the case $p=q=2$, the $L^2(Q_T)$ norm can also be defined by duality as
\begin{align}\label{3.defdual.norm}
\|w\|^2_{L^2(Q_T)} = \sup \left\lbrace \int_0^T \int_{\Tor} w f \, \mathrm{d}x \mathrm{d}t  \, \, : \, \, f \in \mathcal{H}_{\D}, \, \, \sum_{k=1}^{N_T} \Delta t \sum_{i \in \I} \Delta x |f^k_i|^2 = 1  \right\rbrace, \quad \forall w \in \mathcal{H}_\D.
\end{align}
This dual formulation of $\|\cdot\|_{L^2(Q_T)}$ will be needed later on.

\subsection{Numerical scheme}\label{subsec.scheme}

We discretize the initial conditions \eqref{4.ic} as
\begin{align}\label{5.ic}
u^0_{j,i} = \frac{1}{\Delta x} \int_{K_i} u_j^0(x) \, \mathrm{d}x, \quad \forall i \in \I, \, j=1,2.
\end{align}
Now for given $(u^{k-1}_1,u^{k-1}_2) \in \R^{2N}$, the implicit in time numerical scheme writes as
\begin{align}\label{5.sch}
\frac{u^k_{j,i}-u^{k-1}_{j,i}}{\Delta t} - \left(\Delta_\mathcal{T}  (\mu_j^k u_j^k)\right)_i  = R_j(u^k_{1,i},u^k_{2,j}), \quad \forall i \in \I, \, j=1,2,
\end{align}
where $\Delta_\mathcal{T}$ denotes the discrete Laplacian, namely
\begin{align}\label{1.discreteLapl}
\left(\Delta_\mathcal{T}  (\mu_j^k u_j^k)\right)_i = \frac{\mu^k_{j,i+1} u^k_{j,i+1} - 2 \mu^k_{j,i} u^k_{j,i} + \mu^k_{j,i-1} u^k_{j,i-1}}{\Delta x^2}, \quad \forall i \in \I, \, j=1,2,
\end{align}
and
\begin{align}\label{5.mu1}
\mu_{1,i}^k &= d_1 + d_{11} \sum_{n \in \I} \dx \sigma_{1,i-n} u^k_{1,n}+ d_{12} \sum_{n\in\I} \Delta x \rho_{1,i-n} \, u^k_{2,n}, \quad \forall i \in \I,\\\label{5.mu2}
\mu_{2,i}^k &= d_2 + d_{21} \sum_{n\in\I} \Delta x \rho_{2,i-n} \, u^k_{1,n} + d_{22} \sum_{n \in \I} \dx \sigma_{2,i-n} u^k_{2,n}, \quad \forall i \in \I,
\end{align}
with
\begin{align}\label{5.approx.conv1}
\rho_{j,i-n} = \frac{1}{\dx} \int_{K_{i-n}} \rho_j(y) \, \mathrm{d}y, \quad \sigma_{j,i-n} = \frac{1}{\dx} \int_{K_{i-n}} \sigma_j(y) \, \mathrm{d}y, \quad \forall i, \, n \in \I, \, j=1,2.
\end{align}
Let us notice, by construction, that if we consider $\rho_1=\rho_2=\sigma_1=\sigma_2=\delta_0$ then \eqref{5.ic}--\eqref{5.mu2}  yields a finite volume scheme for the local SKT model \eqref{4.local.SKT1}--\eqref{4.local.SKT2}. We also remark that we could equivalently rewrite \eqref{5.sch} as 
\begin{align}\label{5.sch.div}
\Delta x \frac{u^k_{j,i}-u^{k-1}_{j,i}}{\Delta t} + \mathcal{F}^k_{j,i+\frac{1}{2}} - \mathcal{F}^k_{j,i-\frac{1}{2}} = R_j(u^k_{1,i},u^k_{2,i}), \quad \forall i \in \I, \, j=1,2,
\end{align}
where for all $i \in \I$ the numerical fluxes $\mathcal{F}^k_{j,i+\frac{1}{2}}$ are defined by
\begin{equation}\label{5.sch.flux}
\mathcal{F}^k_{j,i+\frac{1}{2}} = \frac{u^k_{j,i}\mu^k_{j,i}-u^k_{j,i+1}\mu^k_{j,i+1}}{\Delta x} = \mu^k_{j,i+\frac{1}{2}} \frac{u^k_{j,i}-u^k_{j,i+1}}{\Delta x} + u^k_{j,i+\frac{1}{2}} \frac{\mu^k_{j,i}-\mu^k_{j,i+1}}{\Delta x}, \quad j=1,2,
\end{equation}
with the centered approximation at interfaces
\begin{align*}
\mu^k_{j,i+\frac{1}{2}} = \frac{\mu^k_{j,i}+\mu^k_{j,i+1}}{2}, \quad u^k_{j,i+\frac{1}{2}} = \frac{u^k_{j,i}+u^k_{j,i+1}}{2}, \quad j=1,2.
\end{align*}

\subsection{Main results}

Let us collect our assumptions.

\begin{labeling}{(A44)}
\item[(H1)] The domain is taken as $\Omega = \Tor$.
\item[(H2)] The diffusion coefficients $d_1$, $d_2$, $d_{11}$ and $d_{22}$ are non-negative constants and the cross-diffusion coefficients $d_{12}$ and $d_{21}$ are positive constants.
\item[(H3)]  The convolution kernels $\rho_1$, $\rho_2$, $\sigma_1$ and $\sigma_2$ are $L^\infty(\Tor)$ functions that are non-negative and satisfy the symmetry hypotheses \eqref{eq:hypoth}. In particular $\rho_i = \rho_{1,i} =  \rho_{2,-i}$ for all $i\in\I$.
\item[(H4)] The initial data $u^0_1$ and $u^0_2$ are non-negative $L^1(\Tor)$ functions with finite entropy, namely $h_1(u_1^0), h_2(u_2^0)\in L^1(\Tor)$.
\item[(H5)] The reaction terms satisfy $R_1=R_2=0$.
\end{labeling}

As already mentioned, hypothesis (H1) is only made for the convenience of the reader and one can adapt the design of the scheme and the results to a $d$-dimensional periodic domain $\Omega = \prod_{i=1}^d \mathbb{R}/L_i\mathbb{Z}$. 
Observe that by assuming (H2) we require cross-diffusion on both species. While this is crucial in our proofs, the scheme performs well in practice even with $d_{12}=0$  or $d_{21}=0$ (see Section~\ref{sec.num}).  The assumption (H3) on the symmetry of the functions $\rho_1$, $\rho_2$, $\sigma_1$ and $\sigma_2$ are needed, as at the continuous level, to show the discrete entropy inequality satisfied by the solutions of the scheme \eqref{5.ic}--\eqref{5.mu2}. However, in terms of practical use, the scheme performs well even when dropping this hypothesis (see Section~\ref{sec:testcase3}). Following for instance \cite{JZ20}, the assumption (H5) can be relaxed and one can extend the proofs of Theorem \ref{thm.SKTexi} and Theorem \ref{thm.convSKT} in the case of the Lotka-Volterra source terms:
\begin{align*}
    R_j(u_1,u_2) = u_j \, \left(a_{j0} - \sum_{k=1}^2 a_{jk} \, u_k \right), \quad j=1,2,
\end{align*}
with $a_{j0}$ and $a_{jk}$ some nonnegative constants for $j,k=1,2.$

Our first main result deals with the existence of solutions to scheme \eqref{5.ic}--\eqref{5.mu2} at each time step. But first let us recall the definition of the discrete entropy functional
\begin{align*}
H(u^k_1,u^k_2) = \sum_{i \in \I} \Delta x \, h_1(u^k_{1,i}) +\sum_{i \in \I} \Delta x \, h_2(u^k_{2,i}),
\end{align*}
where the functions $h_1$ and $h_2$ are defined by 
\begin{align}\label{6.density.entropy}
h_1(x) =\dfrac{1}{d_{12}} \, \left(x \,(\log(x)-1)+1\right), \quad h_2(x) = \dfrac{1}{d_{21}} \, \left(x \,(\log(x)-1)+1\right), \quad \forall x \in (0,+\infty),
\end{align}
with the obvious continuous extension at $x=0$. The corresponding entropy dissipation functional is defined by 
\begin{multline}\label{6.def.dissip}
D(u^k_1,u^k_2) = 2 \frac{d_{11}}{d_{12}}  \sum_{j \in \I} \sigma_{1,j} \sum_{i\in\I} \left(\sqrt{u^k_{1,i+1} u^k_{1,i+1-j}} - \sqrt{u^k_{1,i} u^k_{1,i-j}} \right)^2 \\
+2 \frac{d_{22}}{d_{21}}  \sum_{j \in \I} \sigma_{2,j} \sum_{i\in\I} \left(\sqrt{u^k_{2,i+1} u^k_{2,i+1-j}} - \sqrt{u^k_{2,i} u^k_{2,i-j}} \right)^2
+ 4 \frac{d_1}{d_{12}}  \Big|\sqrt{u^k_1}\Big|^2_{1,2,\T} \\
+ 4 \frac{d_2}{d_{21}}  \Big|\sqrt{u^k_2}\Big|^2_{1,2,\T}
+4  \sum_{j \in \I} \rho_j \sum_{i \in \I} \left(\sqrt{u^k_{1,i+1} u^k_{2,i+1-j}} - \sqrt{u^k_{1,i} u^k_{2,i-j}} \right)^2.
\end{multline}
\begin{theorem}[Existence of solutions]\label{thm.SKTexi}
Let the assumptions (H1)--(H5) hold. Then, for every $1 \leq k \leq N_T$ there exists (at least) one nonnegative solution $(u^k_1,u^k_2)$ to scheme \eqref{5.ic}--\eqref{5.mu2}. Moreover, this solution satisfies the following properties:
\begin{itemize}
\item[(i)] Mass conservation:
\begin{align}\label{6.conser.mass}
\sum_{i \in \I} \Delta x \, u^k_{j,i} = \int_{\Tor} u^0_j(x) \, \mathrm{d}x, \quad \forall k \geq 0, \, j=1,2.
\end{align}
\item[(ii)] Entropy production estimate: for all $k \geq 1$ it holds
\begin{align}\label{6.entopy.dissip}
H(u^k_1,u^k_2) +\Delta tD(u^k_1,u^k_2)  \leq H(u^{k-1}_1,u^{k-1}_2).
\end{align}
\end{itemize}
Finally  for all $j\in\{1,2\}$, if $d_j$ is positive then $u_j^k$ is positive for all $0<k\leq N_T$.
\end{theorem}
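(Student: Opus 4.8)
The plan is to follow the entropy-variable/fixed-point strategy of \cite{JZ20}: enforce positivity by first solving a regularized scheme written in the logarithmic (entropy) unknowns, obtain the discrete entropy inequality \eqref{6.entopy.dissip} as the crucial \emph{a priori} estimate, deduce existence at fixed regularization parameter by a degree argument, and finally remove the regularization. Concretely, fix $1\le k\le N_T$ and assume $(u_1^{k-1},u_2^{k-1})$ is nonnegative (true for $k=1$ by \eqref{5.ic} and (H4)). With $h_1,h_2$ as in \eqref{6.density.entropy} one has $h_1'(x)=d_{12}^{-1}\log x$ and $h_2'(x)=d_{21}^{-1}\log x$; set $\phi_j=(h_j')^{-1}$, i.e. $\phi_1(w)=\exp(d_{12}w)$, $\phi_2(w)=\exp(d_{21}w)$, smooth increasing bijections of $\R$ onto $(0,\infty)$. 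For $\eps>0$ and $\tau\in[0,1]$ I would look for $(w_1,w_2)\in\R^N\times\R^N$ solving, with $u_{j,i}:=\phi_j(w_{j,i})$ and $\mu_j$ still given by \eqref{5.mu1}--\eqref{5.mu2},
\[
\frac{u_{j,i}-u_{j,i}^{k-1}}{\Delta t}-\tau\,(\Delta_\T(\mu_j u_j))_i+\eps\,w_{j,i}=0,\qquad i\in\I,\ j=1,2.
\]
For $\tau=0$ this decouples into strictly monotone scalar equations for each $w_{j,i}$, hence is uniquely solvable with nonzero Brouwer degree; for $\tau=1$, $\eps=0$ it is \eqref{5.sch}. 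A topological degree argument then produces a solution for all $\tau\in[0,1]$, once a $\tau$-uniform bound on $(w_1,w_2)$ is available.

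That bound is precisely the discrete entropy estimate. Multiplying the $j$-th equation by $\Delta t\,\Delta x\, h_j'(u_{j,i})=\Delta t\,\Delta x\, w_{j,i}$ and summing over $i\in\I$ and $j\in\{1,2\}$: convexity of $h_j$ gives $\sum_i\Delta x\,(u_{j,i}-u_{j,i}^{k-1})h_j'(u_{j,i})\ge H(u_j^k)-H(u_j^{k-1})$; discrete summation by parts on $\Tor$ (no boundary terms) turns the flux term of equation $j$ into $\tau$ times the discrete analogue of $d_{1\cdot}^{-1}\!\int\na(\mu_ju_j)\cdot\na\log u_j$; and the regularization contributes $\eps\,\Delta t\,\Delta x\sum_{j,i}|w_{j,i}|^2\ge0$. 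Expanding $\mu_j$ via \eqref{5.mu1}--\eqref{5.mu2}, distributing the convolution double sums, and invoking the elementary logarithmic inequalities that play the discrete role of the chain-rule identities $\na u\cdot\na\log u=4|\na\sqrt u|^2$ and $(a-b)(\log a-\log b)\ge0$, one reorganizes the flux contribution; exactly as in the continuous computation leading to \eqref{4.einonlocal.cont}, the symmetry hypotheses \eqref{eq:hypoth} (i.e. (H3)) are used to pair the $\rho_1$-part of equation $1$ with the $\rho_2$-part of equation $2$, and similarly the $\sigma_j$ self-interaction parts, so that all indefinite cross terms assemble into the nonnegative quantities of $D$ in \eqref{6.def.dissip}. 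One obtains
\[
H(u_1^k,u_2^k)+\tau\,\Delta t\,D(u_1^k,u_2^k)+\eps\,\Delta t\,\Delta x\sum_{j=1}^{2}\sum_{i\in\I}|w_{j,i}|^2\le H(u_1^{k-1},u_2^{k-1}),
\]
which, since $H\ge0$ and $D\ge0$, bounds $\eps\,\Delta t\,\Delta x\sum_{j,i}|w_{j,i}|^2$ and hence, through $\phi_j$, bounds $(u_1^k,u_2^k)$ in $\R^{2N}$, uniformly in $\tau$. The degree argument then yields a solution $(u_{1,\eps}^k,u_{2,\eps}^k)$ of the $\eps$-regularized scheme, automatically with strictly positive entries, for which the displayed inequality holds with $\tau=1$ and a right-hand side $H(u_1^{k-1},u_2^{k-1})$ independent of $\eps$.

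It remains to let $\eps\to0$ and check (i)--(ii) and positivity. The $\eps$-uniform bound $H(u_{1,\eps}^k,u_{2,\eps}^k)\le H(u_1^{k-1},u_2^{k-1})$ controls $(u_{1,\eps}^k,u_{2,\eps}^k)$ in $\R^{2N}$ by superlinearity of $h_j$, so a subsequence converges to some nonnegative $(u_1^k,u_2^k)$; since $\eps|w_{j,\eps,i}^k|\le(\eps\,H(u_1^{k-1},u_2^{k-1})/(\Delta t\,\Delta x))^{1/2}\to0$, the regularization term vanishes and the limit solves \eqref{5.sch}, while continuity of $H$ and $D$ on $[0,\infty)^{2N}$ passes the inequality to the limit, giving \eqref{6.entopy.dissip}. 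Mass conservation \eqref{6.conser.mass} follows from summing \eqref{5.sch} over $i$, the flux sum telescoping on the torus and $R_j\equiv0$ by (H5). For strict positivity when $d_j>0$: if $u_{j,i_0}^k=0$ for some $i_0$, then, since $\mu_j^k\ge d_j>0$ and all entries are nonnegative, $i_0$ minimizes $i\mapsto\mu_{j,i}^ku_{j,i}^k$, so $-(\Delta_\T(\mu_j^ku_j^k))_{i_0}\le0$, while $(u_{j,i_0}^k-u_{j,i_0}^{k-1})/\Delta t=-u_{j,i_0}^{k-1}/\Delta t\le0$; since their sum is $0$ both vanish, which forces $\mu_{j,i_0\pm1}^ku_{j,i_0\pm1}^k=0$, hence $u_{j,i_0\pm1}^k=0$; iterating around $\I=\Z/N\Z$ gives $u_{j,\cdot}^k\equiv0$, whence $\int_\Tor u_j^0=0$ by \eqref{6.conser.mass} — impossible once $u_j^0\not\equiv0$. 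Thus $u_j^k>0$.

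The main obstacle is the discrete entropy--dissipation identity underlying the a priori estimate: reproducing at the discrete level the completion of squares behind \eqref{4.einonlocal.cont} requires carefully tracking the remainder terms created by the discrete Leibniz rule $f_{i+1}g_{i+1}-f_ig_i=\tfrac12(f_{i+1}+f_i)(g_{i+1}-g_i)+\tfrac12(g_{i+1}+g_i)(f_{i+1}-f_i)$ and by the fact that the chain-rule identities hold only as inequalities in the discrete setting; and, because of the convolutions, one must manipulate double sums over $\I\times\I$ in which the symmetry \eqref{eq:hypoth} must be invoked at exactly the right place for the indefinite cross terms to collapse into the squared $\sqrt{u_\cdot u_\cdot}$-difference expressions of \eqref{6.def.dissip}. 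Everything else — the degree argument, the finite-dimensional passage to the limit $\eps\to0$, mass conservation, and the minimum-principle proof of positivity — is comparatively routine.
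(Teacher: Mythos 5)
Your proposal is correct and, for the core of the theorem (the logarithmic change of unknowns, the $\varepsilon$-regularization, the discrete entropy--dissipation inequality obtained by summation by parts and symmetrization of the convolution double sums via (H3), and the finite-dimensional passage to the limit $\varepsilon\to0$), it follows essentially the same route as the paper; the only cosmetic difference there is that you set up a topological degree homotopy in $\tau$ where the paper applies the standard corollary of Brouwer's theorem to $P^k_\varepsilon(X)=P^k(X)+\varepsilon X$, the coercivity in both cases coming from the same a priori inequality \eqref{eq:Pentropy}, which also yields \eqref{6.entopy.dissip} and \eqref{6.conser.mass} in the limit exactly as you describe. The one genuinely different step is the strict positivity when $d_j>0$: you run a discrete minimum principle directly on the limit scheme (if $u^k_{j,i_0}=0$ then $-u^{k-1}_{j,i_0}/\Delta t\le 0$ and $-\bigl(\Delta_{\mathcal T}(\mu^k_j u^k_j)\bigr)_{i_0}\le 0$ sum to zero, hence both vanish, forcing $u^k_{j,i_0\pm1}=0$ since $\mu^k_{j,\cdot}\ge d_j>0$, and the zero propagates around the torus until it contradicts mass conservation), whereas the paper extracts positivity from the entropy dissipation, namely from the uniform bound on $\frac{d_j\Delta t}{\Delta x}\sum_i\bigl(u^\varepsilon_{j,i+1}-u^\varepsilon_{j,i}\bigr)\bigl(\log u^\varepsilon_{j,i+1}-\log u^\varepsilon_{j,i}\bigr)$, which blows up in the limit if a zero cell has a positive neighbour. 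Your argument is more elementary, uses only the structure of the discrete Laplacian and nonnegativity, and avoids any reference to the $\varepsilon$-approximation; like the paper's, it implicitly requires $u^0_j\not\equiv0$ so that mass conservation delivers the contradiction. No gap.
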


The proof of existence of Theorem \ref{thm.SKTexi} is based on a consequence  (see \cite[Section 9.1]{Evans}) of the Brouwer fixed point Theorem. It can be applied thanks to the \emph{a priori} entropy-dissipation estimate \eqref{6.entopy.dissip} and regularization inspired by \cite{BFPS10, Jue16}. It also follows the line of the existence proof of \cite{chainais2022long}.

The second main result is concerned by some properties satisfied by the solutions of the scheme \eqref{5.ic}--\eqref{5.mu2}. These estimates are discrete counterparts of \cite[Theorem 9]{DM21}.

\begin{theorem}[Qualitative properties of the solutions]\label{thm.prop}
Let the assumptions of Theorem \ref{thm.SKTexi} hold. Moreover, assume that $u^0_1,u^0_2\in L^2(\Tor)$ and
let $\gamma$ and $\Gamma$ be some nonnegative constants such that
\begin{align*}
\gamma \leq u^0_1(x), \, u^0_2(x) \leq \Gamma \quad \mbox{a.e. }x \in\Tor.
\end{align*}
Finally let us introduce $m^0_j = \|u^0_j\|_{L^1(\Tor)}$ for $j=1,2$. Then the following properties hold. 
\begin{itemize}
\item[(i)] Maximum principle: If $\rho$, $\sigma_1$ and $\sigma_2$ are twice continuously differentiable functions and that the time step satisfies the condition
\begin{align*}
\Delta t < 1/\left(\min\lbrace d_{11} m^0_1 \|\Delta  \sigma_1\|_{L^\infty(\Tor)},d_{22} m^0_2 \|\Delta  \sigma_2\|_{L^\infty(\Tor)} \rbrace + \min\lbrace d_{12} m^0_2, d_{21} m^0_1\rbrace \|\Delta  \rho\|_{L^\infty(\Tor)} \right),
\end{align*} 
then for all $i \in \I$, $k \geq 1$ and $j=1,2$ we have 
\begin{align*}
e_k \leq u^k_{j,i} \leq E_k,
\end{align*}
where
\begin{align*}
e_k &= \gamma \left(1+ \Delta t\left( \min\lbrace d_{11} m^0_1\|\Delta  \sigma_1\|_{L^\infty(\Tor)},d_{22}m^0_2 \|\Delta  \sigma_2\|_{L^\infty(\Tor)} \rbrace + \min\lbrace d_{12} m^0_2, d_{21} m^0_1\rbrace \|\Delta  \rho\|_{L^\infty(\Tor)} \right)\right)^{-k},\\ 
E_k &= \Gamma \left(1-\Delta t \left(\min\lbrace d_{11} m^0_1\|\Delta \sigma_1\|_{L^\infty(\Tor)},d_{22} m^0_2 \|\Delta \sigma_2\|_{L^\infty(\Tor)} \rbrace + \min\lbrace d_{12}m^0_2, d_{21} m^0_1\rbrace \|\Delta  \rho\|_{L^\infty(\Tor)} \right)\right)^{-k}.
\end{align*}
\item[(ii)] Duality estimate: If $d_1$ and $d_2$ are positives, then there exists a constant $C>0$ which is independent of the mesh size such that
\begin{align*}
\sum_{k=1}^{N_T} \Delta t \sum_{i \in \I} \Delta x \left(\mu^k_{1,i} u^k_{1,i} + \mu^k_{2,i} u^k_{2,i} \right) \left(u^k_{1,i} + u^k_{2,i} \right) \leq  C (1+TA ) \left(\|u^0_{1}\|_{L^2(\Tor)}^2 + \|u^0_{2}\|_{L^2(\Tor)}^2 \right),
\end{align*}
where
\begin{align*}
A=d_1+d_2+d_{11} m^0_1 \, \|\sigma_1\|_{L^1(\Tor)}+d_{22} m^0_2 \, \|\sigma_2\|_{L^1(\Tor)}+\|\rho\|_{L^1(\Tor)}\left(d_{12} m^0_2 + d_{21}  m^0_1\right).
\end{align*}
\end{itemize}
\end{theorem}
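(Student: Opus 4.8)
The plan is to establish the two parts separately, since they rely on entirely different mechanisms: part (i) is a classical discrete maximum principle argument exploiting smoothness of the kernels, while part (ii) is the discrete duality estimate, which is the heart of the matter.

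For part (i), I would argue by induction on $k$. Assume the bounds $e_{k-1}\le u^{k-1}_{j,i}\le E_{k-1}$ hold (with $e_0=\gamma$, $E_0=\Gamma$). Fix $j$ and let $i_0$ be an index where $u^k_{j,\cdot}$ attains its minimum over $\I$. Evaluating the scheme \eqref{5.sch} at $i_0$: the discrete Laplacian term $(\Delta_\T(\mu^k_j u^k_j))_{i_0}$ can be rewritten, using the product rule at the discrete level (as in \eqref{5.sch.flux}), to separate a ``diffusion'' part proportional to $\mu^k_{j,i_0}$ times a discrete Laplacian of $u^k_j$, which has a favourable sign at a minimum, from a remainder involving discrete second differences of $\mu^k_j$. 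The key observation is that, thanks to \eqref{5.mu1}--\eqref{5.mu2} and \eqref{5.approx.conv1}, the coefficient $\mu^k_{j,i}$ is a discrete convolution of the (bounded, nonnegative) $u^k$ against a twice continuously differentiable kernel, so its discrete second difference divided by $\Delta x^2$ is controlled by $\|\Delta\sigma_\ell\|_{L^\infty}$ or $\|\Delta\rho\|_{L^\infty}$ times the mass $m^0_\ell$, uniformly in the mesh; here the mass conservation \eqref{6.conser.mass} is used. This gives an inequality of the form $u^k_{j,i_0}\ge u^{k-1}_{j,i_0} - \Delta t\, \kappa\, (\min_i u^k_{j,i})$ with $\kappa$ the constant in the statement (using $\min_i u^k_{j,i}\le$ the relevant convolution values), whence, solving for the minimum and using the smallness of $\Delta t$, the lower bound $e_k$. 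The upper bound $E_k$ is symmetric, evaluating at a maximizing index. The main technical care is in bookkeeping the $\min\{\cdot,\cdot\}$ over $j=1,2$ correctly — the bound must hold simultaneously for both species, so one takes the worse constant.

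For part (ii), I would adapt the continuous duality method at the discrete level, which is the novelty flagged in the introduction (Section~\ref{sec.Kolmo}). Summing the two equations of \eqref{5.sch}, and writing $w^k_i = u^k_{1,i}+u^k_{2,i}$ and $z^k_i = \mu^k_{1,i}u^k_{1,i}+\mu^k_{2,i}u^k_{2,i}$, one gets a discrete equation of the form $(w^k-w^{k-1})/\Delta t - \Delta_\T z^k = 0$. The quantity to bound is $\sum_k \Delta t \sum_i \Delta x\, z^k_i w^k_i$. The idea is to test against the solution of the backward (adjoint) discrete Kolmogorov-type problem: introduce, for each fixed $\ell$, the solution $(\phi^k)$ of a discrete backward heat equation with a source chosen so that pairing with the scheme reproduces the left-hand side. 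Concretely, one uses the dual characterization \eqref{3.defdual.norm} of the $L^2(Q_T)$ norm: the left-hand side is controlled via Cauchy–Schwarz by $\|z^{1/2}\|_{L^2(Q_T)}$-type quantities, and the discrete Laplacian structure lets one trade a factor of $z$ for the discrete dual norm of $w$, which in turn is estimated by the discrete $L^\infty_tL^2_x$ and $L^2_tH^1_x$ energy of the Kolmogorov solution — this is where the coefficient $A$ enters, since $A$ is precisely an $L^\infty$ bound on $\mu^k_{j,i}$ obtained from \eqref{5.mu1}--\eqref{5.mu2}, Young's inequality for convolutions ($\|\sigma\ast u\|_{L^\infty}\le\|\sigma\|_{L^1}\|u\|_{L^\infty}$ replaced at the discrete level, but only the $L^1$ norm of the kernel and the mass are needed) and mass conservation. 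Telescoping in time against $w^0$ produces the $\|u^0_j\|_{L^2}^2$ right-hand side, and the factor $(1+TA)$ comes from a discrete Grönwall-type step absorbing the zeroth-order contribution of the time-dependent coefficients. The positivity of $d_1,d_2$ is essential here: it guarantees the ellipticity/coercivity of the discrete Kolmogorov operator uniformly in the mesh (the $4d_j/d_{12}$-type terms in $D$, or rather the $d_j$ in $\mu^k_j$), so that the dual problem is well-posed and its energy estimates are uniform.

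The main obstacle is part (ii), and within it the construction and uniform-in-mesh analysis of the discrete adjoint (Kolmogorov) problem: one must show the discrete backward parabolic problem with bounded, space-time-varying, degenerate-at-worst coefficients $\mu^k_{j,i}\ge d_j>0$ admits a solution with energy estimates independent of $\Delta x$ and $\Delta t$, and then carefully match the discrete integration-by-parts (Abel summation, using periodicity so there are no boundary terms) to recover exactly the bilinear quantity on the left-hand side. The continuous proof in \cite{DM21} and the abstract duality lemmas of \cite{DLMT15,LM17,AM20} give the template, but transferring the parabolic regularity to the discrete setting — where one cannot invoke standard parabolic theory — requires proving the needed estimates by hand from the discrete scheme, presumably the content of Section~\ref{sec.Kolmo}; I would expect to invoke those results here rather than reprove them.
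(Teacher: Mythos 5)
Your proposal is correct and takes essentially the same route as the paper: part (i) rests on bounding $\|\Delta_\T \mu_j^k\|_{L^\infty(\Tor)}$ by the $C^2$ norm of the kernels times the conserved masses and then propagating the componentwise bounds through the implicit step (the paper does this via the M-matrix monotonicity of Lemma~\ref{lem.Linftybounds} rather than by evaluating at extremal indices, but the two mechanisms are equivalent), and part (ii) is exactly the discrete duality method of Section~\ref{sec.Kolmo} applied to $z=u_1+u_2$ with the averaged coefficient $\mu=(\mu_1u_1+\mu_2u_2)/(u_1+u_2)$ and Theorem~\ref{thm.duality}. The only imprecision worth noting is that $A$ is not an $L^\infty$ bound on $\mu^k_{j,i}$ but a bound on $\|\mu\|_{L^1(Q_T)}/T$ (obtained from $\|\sigma_j\|_{L^1}$, $\|\rho\|_{L^1}$ and mass conservation), which is precisely the quantity entering the constant of Theorem~\ref{thm.duality}.
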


The proof of Theorem \ref{thm.prop} relies on a discrete duality method. In Section \ref{sec.Kolmo}, we define and study the properties satisfied by the finite volume solutions to the Kolmogorov equation. Then, in Section \ref{sec.proof.thm.prop},  we apply these results on the solutions to the scheme \eqref{5.ic}--\eqref{5.mu2} in order to establish the Theorem. Let us emphasize that the duality estimate holds without any assumptions on the time step or the regularity of the convolution kernels. This implies in particular that this discrete estimate also holds for the solutions to the local SKT system.

Finally, we show the convergence of the solutions to the scheme \eqref{5.ic}--\eqref{5.mu2} towards a distributional solution to \eqref{4.SKT1}--\eqref{4.ic} in the sense of Definition \ref{def.distrib.solution}. However, in order to state precisely our convergence result, we need some notations.

We introduce a family $(\mathcal{D}_m)_{m\in\N}$ of space-time discretizations of $Q_T$ indexed by the size 
$\eta_m=\max\{\Delta x_m,\Delta t_m\}$ of the mesh, satisfying $\eta_m\to 0$
as $m\to\infty$. We denote by $\T_m$ the corresponding mesh of $\Tor$ and
by $\Delta t_m$ the corresponding time step. Finally, for every $m \in \N$ we set $(u_{1,m},u_{2,m}) \in \mathcal{H}_{\mathcal D_m}$ the picewise constant in space and time reconstruction of the solutions to the scheme \eqref{5.ic}--\eqref{5.mu2} corresponding to the mesh $\D_m$. 

\begin{theorem}[Convergence of the scheme]\label{thm.convSKT}
Let the assumptions of Theorem \ref{thm.SKTexi} hold, assume that the coefficients $d_1$ and $d_2$ are positives and let $(\D_m)_{m \in \N}$ be a family of space-time discretizations of $Q_T$ with $\eta_m \to 0$ as $m \to \infty$. Then, if we denote by $(u_{1,m},u_{2,m})$ a family of finite volume solutions to \eqref{5.ic}--\eqref{5.mu2} obtained in Theorem \ref{thm.SKTexi}, there exists $(u_1,u_2) \in \left(L^p(Q_T) \right)^2$ for $p \in [1,3)$ a distributional solutions to \eqref{4.SKT1}--\eqref{4.ic} in the sense of Definition \ref{def.distrib.solution} such that, up to a subsequence, for $j=1,2$ it holds
\begin{align*}
u_{j,m} \to u_j \quad\mbox{strongly in }L^p(Q_T) \mbox{ for }1 \leq p < 3  \quad \mbox{as }m\to \infty.
\end{align*}
\end{theorem}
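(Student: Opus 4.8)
The plan is to pass to the limit in the discrete weak formulation of the scheme, using the uniform estimates from Theorems~\ref{thm.SKTexi} and~\ref{thm.prop} to extract a strongly convergent subsequence. First I would assemble the available \emph{a priori} bounds. The entropy-dissipation inequality \eqref{6.entopy.dissip}, summed over time steps, gives $\sum_k \Delta t\, D(u^k_1,u^k_2)\le H(u^0_1,u^0_2)$, which is uniform in $m$ since the discrete initial entropy converges to $H(u^0_1,u^0_2)$ by (H4) and Jensen's inequality. From the $d_j$-terms in $D$ one extracts a uniform bound on $|\sqrt{u_{j,m}}|_{1,2,\T_m}$ in the discrete $L^2(0,T;H^1)$ norm; combined with mass conservation \eqref{6.conser.mass} and the one-dimensional discrete Gagliardo–Nirenberg/Sobolev inequality, this yields a uniform bound on $\sqrt{u_{j,m}}$ in discrete $L^2(0,T;L^\infty)\cap L^\infty(0,T;L^2)$, hence on $u_{j,m}$ in discrete $L^p$ for $p<3$ (in particular $L^2(Q_T)$, giving meaning to the target space). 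The duality estimate Theorem~\ref{thm.prop}(ii) provides the key additional integrability: since $d_j>0$, $\mu^k_{j,i}u^k_{j,i}\ge d_j u^k_{j,i}$, so the left-hand side controls $\sum_{k,i}\Delta t\,\Delta x\,(u^k_{1,i}+u^k_{2,i})^3$, i.e.\ a uniform $L^3(Q_T)$ bound on $u_{j,m}$. This is exactly what makes $L^p$-convergence for $p<3$ plausible and guarantees that the nonlinear terms $\mu^k_j u^k_j$, which are quadratic in $u$, are uniformly bounded in $L^{3/2}(Q_T)$ and thus equiintegrable.

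Second, I would establish compactness. Space compactness comes from the discrete $L^2(0,T;H^1)$ bound on $\sqrt{u_{j,m}}$: one shows $\|\sqrt{u_{j,m}}(\cdot+\xi,\cdot)-\sqrt{u_{j,m}}\|_{L^2(Q_T)}^2\le C|\xi|(|\xi|+\Delta x_m)$ uniformly, the standard translation estimate for finite-volume functions. Time compactness is obtained from the scheme \eqref{5.sch.div}: testing the equation against discrete test functions and using the uniform $L^{3/2}$ bound on the fluxes $\mathcal F^k_{j,i\pm1/2}$ (which follow from the duality bound, since $\mathcal F$ involves $\mu_j u_j$ and differences of $\mu_j u_j$), one derives a uniform estimate on a discrete time-translate or a negative-norm time derivative of $u_{j,m}$. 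Feeding these into a discrete Aubin–Lions / Kolmogorov-type compactness lemma (the Kruzhkov-type argument referenced after the statement of the existence theorem, cf.\ \cite{ABR11,Kru69}) gives $\sqrt{u_{j,m}}\to v_j$ strongly in $L^2(Q_T)$ along a subsequence, hence $u_{j,m}=(\sqrt{u_{j,m}})^2\to u_j:=v_j^2$ strongly in $L^1(Q_T)$ and a.e.; interpolating the a.e. convergence with the uniform $L^3$ bound upgrades this to strong $L^p(Q_T)$ convergence for every $p<3$. One also needs the convolution terms to pass to the limit: since the discrete convolutions \eqref{5.mu1}--\eqref{5.mu2} are Riemann-sum approximations of $\sigma_j\ast$, $\rho_j\ast$ with $L^\infty$ kernels, $\sigma_{j,m}\ast u_{j,m}\to \sigma_j\ast u_j$ strongly (e.g.\ in $L^p(Q_T)$, $p<3$) follows from the strong convergence of $u_{j,m}$ together with $\|\sigma_{j,m}-\sigma_j\|\to0$ and Young's inequality; consequently the products $\mu^k_{j}u^k_{j}$ converge strongly in $L^{3/2}(Q_T)$ to $(d_j+\dots)u_j$.

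Third, I would perform the limit passage in the discrete formulation. Fix $\phi\in C^\infty_0(\Omega\times[0,T))$; multiply \eqref{5.sch} (or \eqref{5.sch.div}) by $\Delta t\,\Delta x\,\phi(x_i,t_{k-1})$, sum over $i$ and $k$, and perform discrete integration by parts to move the discrete Laplacian onto $\phi$ — using $R_j=0$ by (H5). Each term is then recognized as a Riemann sum: the discrete time-difference term converges to $\int_{Q_T}u_j\partial_t\phi + \int_\Omega u^0_j\phi(\cdot,0)$ using the strong $L^1$ convergence of $u_{j,m}$ and the $L^1$ convergence of the discretized initial data; the diffusion term converges to $\int_{Q_T}(d_ju_j+d_{jj}\sigma_j\ast u_j\,u_j+d_{j\ell}\rho_j\ast u_\ell\,u_j)\Delta\phi$ using the strong $L^{3/2}$ convergence of $\mu^k_j u^k_j$ against the smooth, uniformly convergent discrete second differences of $\phi$. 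Non-negativity of $u_j$ is preserved in the limit since each $u_{j,m}\ge0$. This yields exactly \eqref{weak1}--\eqref{weak2}.

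The main obstacle is the \textbf{compactness in time}: the scheme is strongly coupled and degenerate (the mobility $\mu_j u_j$ vanishes where $u_j\to0$ if $d_j$ were zero, which is why $d_j>0$ is assumed), so one must carefully extract a usable time-regularity estimate from \eqref{5.sch.div}. The natural route is to bound a discrete dual norm of $\partial_t u_{j,m}$ by the $L^{3/2}(Q_T)$ norm of the fluxes (hence by the duality estimate), and then invoke a discrete Aubin–Lions lemma compatible with the fact that the \emph{only} genuine spatial compactness we have is on $\sqrt{u_{j,m}}$, not on $u_{j,m}$ itself — so the compactness argument must be run at the level of $\sqrt{u_{j,m}}$ and the nonlinear square recovered afterwards. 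Handling this mismatch between the variable controlled in $H^1$ ($\sqrt{u_j}$) and the variable appearing in the equation ($u_j$ and its fluxes) is the delicate point; everything else is a matter of organizing Riemann-sum convergences.
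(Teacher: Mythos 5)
Your overall architecture (entropy/mass a priori bounds, discrete Kruzhkov-type compactness from \cite{ABR11}, a Vitali upgrade to $L^p$ for $p<3$, and a Riemann-sum limit passage in the weak formulation) matches the paper's, but the way you derive the two load-bearing estimates does not work. First, the uniform $L^3(Q_T)$ bound does \emph{not} follow from the duality estimate of Theorem~\ref{thm.prop}(ii): the only pointwise lower bound available is $\mu^k_{j,i}\ge d_j$, a constant ($\sigma_j\ast u_j$ is not bounded below by $u_j$), so the quantity $(\mu^k_{1,i}u^k_{1,i}+\mu^k_{2,i}u^k_{2,i})(u^k_{1,i}+u^k_{2,i})$ only controls $\min(d_1,d_2)\,(u^k_{1,i}+u^k_{2,i})^2$, i.e.\ an $L^2(Q_T)$ bound, not the cubic quantity you claim. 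Likewise, interpolating $\sqrt{u_{j,m}}\in L^2(0,T;L^\infty)\cap L^\infty(0,T;L^2)$ gives $u_{j,m}\in L^1(0,T;L^\infty)\cap L^\infty(0,T;L^1)$ and hence only $L^2(Q_T)$. The paper's route is different and sharper: Cauchy--Schwarz and mass conservation give $|u^k_j|_{1,1,\T}\le 2(m^0_j)^{1/2}\,|\sqrt{u^k_j}|_{1,2,\T}$, so the entropy dissipation (with $d_j>0$) bounds $u_{j,m}$ in $L^2(0,T;BV)$; the one-dimensional embedding $BV(\Tor)\hookrightarrow L^\infty(\Tor)$ then gives $L^2(0,T;L^\infty)$, and interpolation with $L^\infty(0,T;L^1)$ gives the $L^3(Q_T)$ bound. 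The duality estimate is not used at all in the convergence proof.

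Second, the ``main obstacle'' you identify — that spatial compactness is only available for $\sqrt{u_{j,m}}$ — is a misdiagnosis created by the first error. The same Cauchy--Schwarz inequality transfers the $H^1$ control on $\sqrt{u_{j,m}}$ to a $BV$ control on $u_{j,m}$ itself, so \cite[Lemma~9.2]{ABR11} applies directly to $u_{j,m}$ once one also has a uniform $L^1(Q_T)$ bound on the numerical fluxes. That flux bound is obtained in Proposition~\ref{prop.estim} by splitting $\F^k_{j,i+\frac12}$ as in \eqref{5.sch.flux} and using the $L^\infty$ bound on the kernels (which makes $\mu_{j,m}$ uniformly bounded, since the convolution acts on functions of fixed mass) together with the $BV$ bounds — not from an $L^{3/2}$ bound on $\mu_j u_j$, which in any case would not control the difference quotient appearing in the flux. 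With these corrections, your remaining steps (strong $L^1$ and a.e.\ convergence, Vitali with the $L^3$ bound, convergence of the discretized convolutions, and the limit passage against discrete second differences of the test function) coincide with the paper's argument.
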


The proof of Theorem \ref{thm.convSKT} is based on uniform estimates w.r.t. $\dx$ and $\Delta t$, established in Section \ref{sec.unif}. These estimates allow us to apply in Section \ref{subsec.compactness} a compactness result obtained in \cite{ABR11} which yields, up to a subsequence, the strong convergence in $L^p(Q_T)$ of the sequence $(u_{1,m},u_{2,m})$ towards the functions $u_1$ and $u_2$ stated in Theorem \ref{thm.convSKT}. Then, we identify in Section \ref{subsec.proof.conv} the functions $u_1$ and $u_2$ as distributional solutions in the sense of Definition \ref{def.distrib.solution} of the nonlocal cross-diffusion system \eqref{4.SKT1}--\eqref{4.ic}.

\begin{remark}
Let us make few remarks concerning Theorem \ref{thm.convSKT}.
\begin{itemize}
\item If $d > 1$, the convergence of the scheme can also be established. However in this case we obtain, up to a subsequence, for $j=1,2$,
\begin{align*}
u_{j,m} \rightarrow u_j \quad \mbox{strongly in }L^p(Q_T) \mbox{ for }1 \leq p < {\frac{d}{d-1}}, \quad \mbox{as }m \to \infty,
\end{align*}
see Remark \ref{rem.dimsupconv} for more details. 
\item If  $d_1=d_2 = 0$, then it is still possible to conclude if the convolution kernels are smooth enough ($C^2$ for instance). Indeed in this case from weak compactness on $(u_{j,m})_m$, strong compactness can be obtained on its convolution with the smooth kernel. Moreover, if the convolution kernels are $C^2$, then one can prove a stability estimate (in $L^2$-norm on the difference between two solutions) for the solutions to \eqref{4.SKT1}--\eqref{4.ic} which provides uniqueness and continuous dependence on the initial data at the continuous level. As a by-product we deduce that in this case the whole sequence $(u_{1,m}, u_{2,m})$ converges as $m \to \infty$ instead of only a subsequence. In the discrete setting, a  counterpart of the stability estimate in $L^2$-norm can also be established uniformly in $\Delta x$ at the price of additional $H^1$ regularity on the initial data. This difference with the continuous setting comes from the fact that a Gr\"onwall argument with implicit schemes requires a condition on the time step (see  Proposition~\ref{prop.estimate_kolmo} for an illustration of this fact). Under these assumptions, one also gets uniqueness of solutions to the scheme. Uniqueness may also be obtained without additional regularity assumptions provided that a CFL condition holds (see Remark~\ref{rem:CFL} for details). 
\item Finally, with enough regularity on the data, one could derive quantitative error estimates between approximate and continuous solutions.
\end{itemize}
\end{remark}

\section{Existence of solution and entropy dissipation estimate}\label{subsec.proofexiSKT}

The problem of existence of solution reduces to the resolution of a nonlinear system of equations. The natural unknowns for which a fixed point theorem will be easily applied are linked to the entropy. In our case, given $(u_1,u_2)\in((0,+\infty)^N)^2$ we define the new unknown $X = \Phi(u_1,u_2)$ where $\Phi:((0,+\infty)^N)^2\to\R^{2N}$ is the smooth diffeomorphism defined by
\[
\Phi(u_1,u_2) = (d_{12}^{-1}\log(u_{1,1}),\dots, d_{12}^{-1}\log(u_{1,N}), d_{21}^{-1}\log(u_{2,1}),\dots, d_{21}^{-1}\log(u_{2,N}))^\top\in\R^{2N}.
\]
From there finding a positive solution to the scheme \eqref{5.ic}--\eqref{5.mu2} amounts to finding a zero $X^k = \Phi(u_1^k,u_2^k)$ of the continuous map $P^k:\R^{2N}\to\R^{2N}$ defined for any $(u_1,u_2)\in((0,+\infty)^N)^2$ by its components
\[
P_{i + N(j-1)}^k(\Phi(u_1,u_2)) = \Delta x(u_{j,i}-u^{k-1}_{j,i}) - \Delta t\Delta x\left(\Delta_\mathcal{T}  (\mu_j u_j)\right)_i,\quad \forall i\in\{1,\dots,N\}, j\in\{1,2\},
\]
where $(u_1^{k-1},u_2^{k-1})$ are given and $\mu_1,\mu_2$ are related to $u_1,u_2$ through the relation \eqref{5.mu1} and \eqref{5.mu2} dropping the index $k$. 
\subsection{Entropy dissipation and mass conservation}
In the following $\lla,\rra$ denotes the Euclidean scalar product and $|\cdot|$ the Euclidean norm.
\begin{proposition}\label{prop:Pproperties}
Let $(u_1^{k-1},u_2^{k-1})$ be componentwise non-negative. Then for any $X\in\R^{2N}$,
\begin{equation}\label{eq:Pconservmass}
\lla P^k(X),\mathds{1}_j\rra = \sum_{i\in\I}(u_{j,i}-u^{k-1}_{j,i})\Delta x \,,\quad \forall j \in\{1,2\},
\end{equation}
and
\begin{equation}\label{eq:Pentropy}
\lla P^k(X),X\rra\geq H(u_1,u_2) - H(u_1^{k-1},u_2^{k-1}) + \Delta tD(u_1,u_2),
\end{equation}
where $\mathds{1}_1 = (1,\dots,1,0,\dots,0)$, $\mathds{1}_2 = (0,\dots,0,1,\dots,1)$, $(u_1,u_2) = \Phi^{-1}(X)$ and $D(u_1,u_2)$ denotes the entropy dissipation functional given by \eqref{6.def.dissip}. 
\end{proposition}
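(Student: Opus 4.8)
The plan is to derive both statements by a direct computation starting from the definition of $P^k$, using discrete summation by parts on the periodic mesh $\T$ (so that no boundary terms appear) together with the convexity of the entropy densities $h_1,h_2$. For \eqref{eq:Pconservmass} I would simply add up the components of $P^k(X)$ attached to species $j$, which gives
\[
\lla P^k(X),\mathds{1}_j\rra = \sum_{i\in\I}\Delta x\,(u_{j,i}-u^{k-1}_{j,i}) - \Delta t\,\Delta x\sum_{i\in\I}\big(\Delta_\T(\mu_j u_j)\big)_i ,
\]
and then observe that $\Delta_\T$ is a discrete divergence of a flux, so $\sum_{i\in\I}(\Delta_\T v)_i=0$ for every $v\in\mathcal H_\T$ (the telescoping sum closes up by periodicity); this already yields \eqref{eq:Pconservmass}, and notice that non-negativity of $(u_1^{k-1},u_2^{k-1})$ plays no role here.

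For \eqref{eq:Pentropy} the starting point is that, with $(u_1,u_2)=\Phi^{-1}(X)$, the components of $X$ are exactly the discrete entropy variables: $X_{i+N(j-1)}=h_j'(u_{j,i})$, since $h_1'(x)=d_{12}^{-1}\log x$ and $h_2'(x)=d_{21}^{-1}\log x$ by \eqref{6.density.entropy}. Hence $\lla P^k(X),X\rra$ splits into a discrete time-derivative contribution $\sum_{j}\sum_{i}\Delta x\,(u_{j,i}-u^{k-1}_{j,i})\,h_j'(u_{j,i})$ and a discrete diffusion contribution $-\Delta t\sum_j\sum_i\Delta x\,(\Delta_\T(\mu_j u_j))_i\,h_j'(u_{j,i})$. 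The first one I bound below by $H(u_1,u_2)-H(u_1^{k-1},u_2^{k-1})$ using the convexity inequality $(a-b)h_j'(a)\ge h_j(a)-h_j(b)$. For the second one I apply discrete summation by parts to rewrite it as $\Delta t\sum_j\frac{1}{\Delta x}\sum_i\big((\mu_ju_j)_{i+1}-(\mu_ju_j)_i\big)\big(h_j'(u_{j,i+1})-h_j'(u_{j,i})\big)$, then expand $\mu_ju_j$ following \eqref{5.mu1}--\eqref{5.mu2} into its linear part $d_ju_j$, its self-interaction part $d_{jj}(\sigma_j\ast u_j)u_j$ and its cross-interaction part, reindex every discrete convolution (so that $\sum_n\Delta x\,\sigma_{1,i-n}u_{1,n}=\Delta x\sum_\ell\sigma_{1,\ell}u_{1,i-\ell}$, and similarly for $\rho$), and show that each resulting block is bounded below by the corresponding block of $D(u_1,u_2)$.

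The engine for all these lower bounds is the elementary inequality $(a-b)(\log a-\log b)\ge 4(\sqrt a-\sqrt b)^2$ for $a,b>0$ (equivalently $(t+1)\log t\ge 2(t-1)$ for $t=\sqrt{a/b}$), which I would isolate as a preliminary lemma. Applied to $a=u_{j,i+1}$, $b=u_{j,i}$ it turns the linear blocks into $4\tfrac{d_1}{d_{12}}|\sqrt{u_1}|^2_{1,2,\T}$ and $4\tfrac{d_2}{d_{21}}|\sqrt{u_2}|^2_{1,2,\T}$. For the self-interaction block $d_{11}(\sigma_1\ast u_1)u_1$ one obtains, after reindexing, the sum $\tfrac{d_{11}}{d_{12}}\sum_\ell\sigma_{1,\ell}\sum_i(u_{1,i+1-\ell}u_{1,i+1}-u_{1,i-\ell}u_{1,i})(\log u_{1,i+1}-\log u_{1,i})$; symmetrising in $\ell$ (using $\sigma_{1,\ell}=\sigma_{1,-\ell}$ and $\sigma_{1,\ell}\ge0$ from (H3)) and applying the shift $i\mapsto i-\ell$ replaces the bracket by $\tfrac12(P_i-Q_i)(\log P_i-\log Q_i)$ with $P_i=u_{1,i+1}u_{1,i+1-\ell}$ and $Q_i=u_{1,i}u_{1,i-\ell}$, and the logarithmic inequality yields precisely $2\tfrac{d_{11}}{d_{12}}\sum_\ell\sigma_{1,\ell}\sum_i(\sqrt{u_{1,i+1}u_{1,i+1-\ell}}-\sqrt{u_{1,i}u_{1,i-\ell}})^2$; the $d_{22}$ block is handled identically. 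Finally the two cross blocks $d_{12}(\rho_1\ast u_2)u_1$ (from the $u_1$-equation) and $d_{21}(\rho_2\ast u_1)u_2$ (from the $u_2$-equation) must be combined: after reindexing, using $\rho_{1,\ell}=\rho_{2,-\ell}=\rho_\ell$ from (H3), and shifting $i\mapsto i-\ell$ in the second one, their sum collapses to $\sum_\ell\rho_\ell\sum_i(P_i-Q_i)(\log P_i-\log Q_i)$ with $P_i=u_{1,i+1}u_{2,i+1-\ell}$ and $Q_i=u_{1,i}u_{2,i-\ell}$ (because $\log P_i-\log Q_i=(\log u_{1,i+1}-\log u_{1,i})+(\log u_{2,i+1-\ell}-\log u_{2,i-\ell})$), and one last application of the logarithmic inequality produces $4\sum_\ell\rho_\ell\sum_i(\sqrt{u_{1,i+1}u_{2,i+1-\ell}}-\sqrt{u_{1,i}u_{2,i-\ell}})^2$. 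Summing all blocks gives the diffusion contribution $\ge\Delta t\,D(u_1,u_2)$, and together with the bound on the time contribution this is \eqref{eq:Pentropy}.

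The delicate part is the bookkeeping inside the diffusion contribution: keeping the reindexing of the discrete convolutions transparent, and — exactly as in the continuous computation leading to \eqref{4.einonlocal.cont} — recognising that the self-interaction block is not a square until one symmetrises in the convolution variable (which is where the symmetry hypothesis (H3) on $\sigma_1,\sigma_2$ enters), while the individual cross blocks are not sign-controlled and only become so after being paired across the two species' equations using $\rho_{1,\ell}=\rho_{2,-\ell}$. Everything else is routine: the telescoping of the discrete Laplacian, the convexity estimate for the time term, and the repeated use of the single scalar inequality $(a-b)(\log a-\log b)\ge 4(\sqrt a-\sqrt b)^2$.
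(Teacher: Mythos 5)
Your proposal is correct and follows essentially the same route as the paper: telescoping of the discrete Laplacian for mass conservation, convexity of $x\mapsto x\log x - x +1$ for the time term, summation by parts followed by the decomposition of $\mu_j u_j$ into linear, self-interaction and cross-interaction blocks, symmetrization in the convolution index via (H3) for the $\sigma$-blocks, pairing of the two $\rho$-blocks via $\rho_{1,\ell}=\rho_{2,-\ell}$, and repeated use of $(a-b)(\log a-\log b)\ge 4(\sqrt a-\sqrt b)^2$. All coefficients you report match the dissipation functional \eqref{6.def.dissip}, so there is nothing to add.
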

\begin{proof}
In order to prove \eqref{eq:Pconservmass}, it suffices to sum the components of $P^k(X)$ and observe that
\[
\sum_{i\in\I}\left(\Delta_\mathcal{T}(\mu_j u_j)\right)_i  = 0,
\]
since it is a telescopic sum. Concerning the inequality, first observe that $H(u_1^{k-1},u_2^{k-1})$ is well-defined since $(u_1^{k-1},u_2^{k-1})$ is non-negative. Then, using the definition of $\Phi(u_1,u_2)$ and $P^k$ one obtains
\[
\lla P^k(X),X\rra = \Delta t(I_1 + I_2 + J_1 + J_2)
\]
with 
\begin{align*}
 I_1 &= \frac{1}{d_{12}}\sum_{i \in \I} \dx (u_{1,i}-u^{k-1}_{1,i}) \, \log(u_{1,i}),\\
 I_2 &= \frac{1}{d_{21}}\sum_{i \in \I} \dx (u_{2,i}-u^{k-1}_{2,i}) \, \log(u_{2,i}),\\
 J_1 &= \frac{1}{d_{12}\dx} \sum_{i \in \I}(-\mu_{1,i+1} u_{1,i+1} + 2 \mu_{1,i} u_{1,i} - \mu_{1,i-1} u_{1,i-1}) \, \log(u_{1,i})\\
 J_2 &= \frac{1}{d_{21}\dx} \sum_{i \in \I} (-\mu_{2,i+1} u_{2,i+1} + 2 \mu_{2,i} u_{2,i} - \mu_{2,i-1} u_{2,i-1}) \, \log(u_{2,i})
\end{align*}
Using the convexity of $x \mapsto (x\log(x)-x+1)$ to bound both $I_1$ and $I_2$ from below, one obtains
\[
I_1 + I_2\geq H(u_1,u_2) - H(u_1^{k-1},u_2^{k-1}).
\]
Then for $J_1$, a discrete integration by parts (or summation by parts) yields
\[
J_1 = \frac{1}{d_{12} \,\Delta x} \sum_{i \in \I} \left(u_{1,i+1} \mu_{1,i+1} - u_{1,i} \mu_{1,i} \right) \left(\log(u_{1,i+1}) - \log(u_{1,i}) \right),
\]
and a similar formula holds for $J_2$. Using the definitions of $\mu_{1}$ and $\mu_{2}$ (see \eqref{5.mu1} and \eqref{5.mu2} without the exponents), one has $J_1 = J_{1}^\text{diff}+J_{1}^{\sigma_1}+J_{1}^{\rho_1}$ and $J_2 = J_{2}^\text{diff}+J_{2}^{\sigma_2}+J_{2}^{\rho_2}$ with
\begin{align*}
J_{1}^\text{diff} &= \frac{d_1}{d_{12}\dx} \sum_{i \in \I} \left(u_{1,i+1}-u_{1,i} \right) \left(\log(u_{1,i+1})-\log(u_{1,i}) \right)\geq\frac{4d_1}{d_{12}\dx} \sum_{i \in \I} \left(\sqrt{u_{1,i+1}}-\sqrt{u_{1,i}} \right)^2,
\end{align*}
and a similar estimate for $J_2^\text{diff}$. For the second term one has
\begin{align*}
J_{1}^{\sigma_1} &= \frac{d_{11}}{d_{12}} \sum_{j\in\I}\sigma_{1,j}\sum_{i \in \I} \left( u_{1,i+1}   u_{1,i+1-j} - u_{1,i}  u_{1,i-j} \right) \left(\log(u_{1,i+1})-\log(u_{1,i}) \right)\\
&= \frac{1}{2}J_{1}^{\sigma_1} + \frac{d_{11}}{2d_{12}} \sum_{j\in\I} \sigma_{1,j} \sum_{i \in \I} \left(u_{1,i+1} u_{1,i+1+j} - u_{1,i} u_{1,i+j} \right) \left(\log(u_{1,i+1}) - \log(u_{1,i}) \right)\\
&= \frac{1}{2}J_{1}^{\sigma_1} + \frac{d_{11}}{2d_{12}} \sum_{j\in\I} \sigma_{1,j} \sum_{i \in \I} \left(u_{1,i+1-j} u_{1,i+1} - u_{1,i-j} u_{1,i} \right) \left(\log(u_{1,i+1-j}) - \log(u_{1,i-j}) \right)\\
&= \frac{d_{11}}{2d_{12}} \sum_{j\in\I} \sigma_{1,j} \sum_{i \in \I} \left( u_{1,i+1}   u_{1,i+1-j} - u_{1,i}  u_{1,i-j} \right) \left(\log(u_{1,i+1}u_{1,i+1-j}) - \log(u_{1,i}u_{1,i-j}) \right)\\
&\geq \frac{2d_{11}}{d_{12}} \sum_{j\in\I} \sigma_{1,j} \sum_{i \in \I} \left( \sqrt{u_{1,i+1}   u_{1,i+1-j}} - \sqrt{u_{1,i}  u_{1,i-j}} \right)^2.
\end{align*}
 In the previous estimate, the second inequality is obtained by changing $j$ into $-j$ and using the symmetry of $\sigma_1$. For the third equality, one changes $i$ into $i-j$. The fourth one is the combination of the first and third equalities. Once again a similar estimate holds for $J_{2}^{\sigma_2}$. Finally with the same changes of indices one can estimate the sum
\begin{align*}
J_{1}^{\rho_1} + J_{2}^{\rho_2} =&  \sum_{j \in \I} \rho_j\sum_{i \in \I} \left(u_{1,i+1}  u_{2,i+1-j} - u_{1,i} u_{2,i-j} \right) \left(\log(u_{1,i+1})-\log(u_{1,i}) \right)\\
&+ \sum_{j \in \I} \rho_{-j}\sum_{i \in \I} \left(u_{2,i+1}  u_{1,i+1-j} - u_{2,i} u_{1,i-j} \right) \left(\log(u_{2,i+1})-\log(u_{2,i}) \right)\\
=&  \sum_{j \in \I} \rho_j\sum_{i \in \I} \left(u_{1,i+1}  u_{2,i+1-j} - u_{1,i} u_{2,i-j} \right) \left(\log(u_{1,i+1})-\log(u_{1,i}) \right)\\
&+ \sum_{j \in \I} \rho_{j}\sum_{i \in \I} \left(u_{2,i-j+1}  u_{1,i+1} - u_{2,i-j} u_{1,i} \right) \left(\log(u_{2,i-j+1})-\log(u_{2,i-j}) \right)\\
=&  \sum_{j \in \I} \rho_j\sum_{i \in \I} \left(u_{1,i+1}  u_{2,i+1-j} - u_{1,i} u_{2,i-j} \right) \left(\log(u_{1,i+1}u_{2,i-j+1})-\log(u_{1,i}u_{2,i-j}) \right)\\
\geq&  4\sum_{j \in \I} \rho_j\sum_{i \in \I} \left(\sqrt{u_{1,i+1}  u_{2,i+1-j}} - \sqrt{u_{1,i} u_{2,i-j}} \right)^2.
\end{align*}
By summing all the estimates one obtains \eqref{eq:Pentropy}. The last point of the proposition is obtained by induction.
\end{proof}

\subsection{Proof of Theorem \ref{thm.SKTexi}}
Let us show that $P^k\circ\Phi$ has at least one zero. We use an approximation argument by introducing 
\[
P_\varepsilon^k(X) = P^k(X)+\varepsilon X,\quad \forall X\in\R^{2N}\,.
\]
Using \eqref{eq:Pentropy} and the non-negativity of the entropy and the entropy dissipation one has
\[
\lla P^k_\varepsilon(X),X\rra\geq \varepsilon|X|^2 - H(u_1^{k-1},u_2^{k-1})\,.
\]
Therefore, as a consequence of Brouwer fixed point theorem (see \cite[Section 9.1]{Evans} for details), there is $X_\varepsilon$ such that 
\[
 P^k_\varepsilon(X_\varepsilon) = 0\quad \text{and} \quad |X_\varepsilon|^2\leq H(u_1^{k-1},u_2^{k-1})\varepsilon^{-1}\,.
\]
Let us define the associated $(u_{1}^\varepsilon,u_{2}^\varepsilon) = \Phi^{-1}(X_\varepsilon)$, which is componentwise positive by definition. Observe that 
 \begin{align*}
   \dx \, h_j(u_{j,i}^{\eps})
 	\le H(u_1^{\eps},u^\eps_2) \le  H(u^{k-1}_1,u^{k-1}_2), \quad \forall i \in \I, \, j=1,2.
 \end{align*}
where the last inequality is again a consequence of \eqref{eq:Pentropy} for $X=X_\varepsilon$. This shows that for any $(u_{1,i}^{\eps},u_{2,i}^\eps)$ is uniformly bounded in $\varepsilon$. Therefore, there exists a subsequence (not relabeled) such that $u_{j,i}^{\eps}\to u_{j,i}^k \geq 0$ as $\eps\to 0$, for every $i \in \I$ and $j=1,2$. Since $|X_\varepsilon| = O(\varepsilon^{-1/2})$ one has 
\[
0 = \lim_{\varepsilon\to0}P^k_\varepsilon(X_\varepsilon) = P^k(\Phi(u_1^k,u_2^k))
\]
Therefore $(u_1^k,u_2^k)$ solves the scheme \eqref{5.ic}-\eqref{5.mu2}. By taking limits in \eqref{eq:Pconservmass} and \eqref{eq:Pentropy} evaluated at $X_\varepsilon$ as $\varepsilon\to0$ one recovers \eqref{6.conser.mass} and \eqref{6.entopy.dissip} respectively. 

Let us finally prove that if $d_j>0$, $u^k_{j,i}> 0$ for all $i \in \I$  and $0<k\leq N_T$. This is a consequence of the entropy estimate. For a given $0<k\leq N_T$ we notice that (thanks to the term $J^{\mathrm{diff}}_j$) the positive solution $(u^\varepsilon_1,u^\varepsilon_2)$ satisfies the following estimate
\begin{align*}
    \frac{d_j \, \Delta t}{\dx}\sum_{i \in \I} \left(u^\varepsilon_{j,i+1}-u^\varepsilon_{j,i}\right)\, \left(\log\left(u^\varepsilon_{j,i+1}\right)-\log\left(u^\varepsilon_{j,i}\right)\right) \leq \max(d_{12},d_{21}) \, H\left(u^{k-1}_1,u^{k-1}_2\right) \quad j=1,2.
\end{align*}
At the limit $\varepsilon\to0$, let us assume by contradiction that there exists $i \in \I$ such that $u^k_{j,i}=0$. Then as the r.h.s. of the previous inequality is finite this implies that $u^k_{j,i+1}=0$. Thus repeating this argument we deduce that $u^k_{j,i}=0$ for all $i \in \I$. Consequently we have $\|u^k_j\|_{L^1(\Tor)}=0$ which contradicts the mass conservation property \eqref{6.conser.mass}. This completes the proof of Theorem~\ref{thm.SKTexi}.

\begin{remark}[Uniqueness under CFL]\label{rem:CFL}
Under a parabolic CFL condition, uniqueness of a solution to the scheme can also be proven. Indeed, if one denotes by $U^k$ a vector of solution given by Theorem~\ref{thm.SKTexi}, then the scheme may be rewritten as $\tilde{\mathbb{M}}^k(U^k) U^k = U^{k-1}$. The matrix $\tilde{\mathbb{M}}^k$ (which has a similar definition as \eqref{2.defMk} hereafter) depends on $U^k$ through $\mu_1^k$ and $\mu_2^k$ and is a perturbation of the identity matrix of size $\frac{ \Delta t}{\Delta x^2}(\|\mu_1^k\|_{L^\infty} + \|\mu_2^k\|_{L^\infty})$. Since $\|\mu_i^k\|_{L^\infty}$ is bounded uniformly in terms of the initial masses, $L^\infty$ norms of the convolution kernels and diffusion coefficients, $\frac{ \Delta t}{\Delta x^2}$ can be made small enough with respect to these quantities only so that a contraction argument yields the uniqueness of solutions. 
\end{remark}

\section{Estimates on the discrete Kolmogorov equation}\label{sec.Kolmo}

In this section, we focus on estimates concerning the finite volume discretization of the Kolmogorov equation $\partial_t z = \Delta(\mu z)$. In particular we adapt at the discrete level some properties established in \cite{AM20,DM21}. 

In the rest of this section, we assume that $(\mu^k_i)_{i\in\I}$, $k=1,\dots,N_T$ is given and componentwise non-negative. From there, the scheme is given for all $k\geq1$ by
\begin{align}\label{1.sch}
 \frac{z^k_i-z^{k-1}_i}{\Delta t} - \left(\Delta_\mathcal{T}  (\mu^k z^k)\right)_i  = 0, \quad \forall i \in \I,
\end{align}
where $\Delta_\mathcal{T} $ denotes the discrete Laplacian operator defined by \eqref{1.discreteLapl}.

\subsection{Well-posedness of the scheme and $L^\infty$ estimates}
Let us first prove that the scheme \eqref{1.sch} admits a unique solution at each time step.
\begin{lemma}\label{lem.exi}
For any $(z^{k-1}_i)_{i\in\I}$ there is a unique $(z^{k}_i)_{i\in\I}$ satisfying \eqref{1.sch}. Moreover, if $z^{k-1}$ is componentwise nonnegative then so is $z^{k}$.
\end{lemma}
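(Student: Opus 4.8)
The plan is to read \eqref{1.sch}, at a fixed time step $k$, as a square linear system $A^k z^k=z^{k-1}$ for $z^k=(z^k_i)_{i\in\I}\in\R^N$, and to exploit the structure of the matrix $A^k$. Writing $(\Delta_\mathcal{T} v)_i=(Lv)_i/\Delta x^2$ with $L$ the circulant matrix of the periodic discrete Laplacian and $M^k=\mathrm{diag}((\mu^k_i)_{i\in\I})$, we have $A^k=\mathrm{Id}-\tfrac{\Delta t}{\Delta x^2}LM^k$, whose only nonzero entries are $A^k_{i,i}=1+\tfrac{2\Delta t}{\Delta x^2}\mu^k_i$ and $A^k_{i,i\pm1}=-\tfrac{\Delta t}{\Delta x^2}\mu^k_{i\pm1}$ (indices taken mod $N$). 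Thus $A^k$ has diagonal entries $\geq 1$ and non-positive off-diagonal entries --- it is a $Z$-matrix --- and, crucially, it is \emph{strictly dominant along columns}: for each $j$ the moduli of the two off-diagonal entries of column $j$ sum to $\tfrac{2\Delta t}{\Delta x^2}\mu^k_j<1+\tfrac{2\Delta t}{\Delta x^2}\mu^k_j=A^k_{j,j}$. Note that dominance along rows need not hold, since $\mu^k$ is only assumed non-negative, with no regularity.

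For existence and uniqueness I would invoke the Levy--Desplanques theorem: since $A^k$ is strictly column-dominant, $(A^k)^\top$ is strictly row-dominant, hence invertible, hence so is $A^k$, and the unique solution is $z^k=(A^k)^{-1}z^{k-1}$. (Equivalently one notes $(A^k)^\top\mathbf{1}=\mathbf{1}$, which summed over $\I$ is the telescoping identity $\sum_i(\Delta_\mathcal{T}(\mu^k z^k))_i=0$, i.e.\ discrete mass conservation.)

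For the preservation of non-negativity the point is that $A^k$ is a nonsingular M-matrix, hence inverse-positive, so that $z^{k-1}\geq 0$ componentwise forces $z^k=(A^k)^{-1}z^{k-1}\geq 0$. One way to see $(A^k)^{-1}\geq0$ without quoting M-matrix theory: split $A^k=D-E$ with $D$ its (positive) diagonal part and $E\geq0$; the matrices $D^{-1}E$ and $ED^{-1}$ are similar, and $\|ED^{-1}\|_1=\max_j\tfrac{2\Delta t\mu^k_j/\Delta x^2}{1+2\Delta t\mu^k_j/\Delta x^2}<1$, so the Neumann series converges and $(A^k)^{-1}=\big(\sum_{n\geq0}(D^{-1}E)^n\big)D^{-1}\geq0$. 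I expect this non-negativity step to be the only genuine subtlety: because no CFL/time-step restriction is imposed here, the naive discrete maximum principle fails --- evaluating the scheme at an index realizing $\min_i z^k_i$ leaves a factor $1-\tfrac{\Delta t}{\Delta x^2}(\mu^k_{i+1}+\mu^k_{i-1}-2\mu^k_i)$ of indefinite sign --- so one really must use the global inverse-positivity of $A^k$ coming from its column structure rather than a pointwise comparison.
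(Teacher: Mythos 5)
Your proof is correct and follows essentially the same route as the paper: both identify the system matrix as a $Z$-matrix that is strictly diagonally dominant with respect to its columns, hence a nonsingular M-matrix, which gives invertibility and inverse-positivity (monotonicity) simultaneously. The only difference is that the paper simply invokes the M-matrix property, whereas you additionally spell out the Neumann-series justification of inverse-positivity and correctly note that a pointwise maximum-principle argument would not work without a time-step restriction.
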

\begin{proof}
Let us write $Z^{k-1}=(z^{k-1}_0,\ldots,z^{k-1}_{N-1})^\top$ for all $k\geq1$. Observe that the scheme writes
$\mathbb{M}^k Z^k = Z^{k-1}$ where $\mathbb{M}^k$ is a $N \times N$ tridiagonal matrix defined by
\begin{align}\label{2.defMk}
\M^k_{i,i-1} = - \frac{\Delta t}{\Delta x^2}\mu^k_{i-1}, \quad \M^k_{i,i} = 1+2  \frac{\Delta t}{\Delta x^2}\mu^k_i, \quad \M^k_{i,i+1} = -  \frac{\Delta t}{\Delta x^2}\mu^k_{i+1}, \quad \forall i\in \I.
\end{align}
We notice that $\M^k$ has positive diagonal terms and non-positive off-diagonal terms. Furthermore the matrix $\M^k$ is strictly diagonally dominant with respect to its columns. Therefore $\M^k$ is a non-singular M-matrix and is thus monotone and invertible. This finishes the proof of Lemma~\ref{lem.exi}.
\end{proof}

We prove in the following result some $L^\infty$ estimates for the solution to scheme \eqref{1.sch}.

\begin{lemma}\label{lem.Linftybounds}
 Let us assume that there exists $\tilde{\gamma}, \tilde{\Gamma} \geq 0$ such that
 \begin{align*}
 \tilde{\gamma} \leq z^0_i \leq \tilde{\Gamma}, \quad \forall i\in\I.
 \end{align*}
 Then for every $k \geq 1$ and every $\Delta t > 0$ such that
\begin{align*}
 \Delta t < 1/ \max_{1 \leq k \leq N_T} \|[\Delta_\mathcal{T}  \mu^k]_+\|_{L^\infty(\Tor)},
 \end{align*}
 the solution $Z^k$ to \eqref{1.sch} satisfies
 \begin{align}\label{2.Linftybounds}
 \tilde{\gamma} \, \Pi_{n=1}^k \left(1+ \Delta t \|[\Delta_\mathcal{T}  \mu^n]_-\|_{L^\infty(\Tor)} \right)^{-1} \leq z^k_i \leq \tilde{\Gamma} \, \Pi_{n=1}^k \left(1-\Delta t \|[\Delta_\mathcal{T}  \mu^n]_+\|_{L^\infty(\Tor)} \right)^{-1}, \,\,\, \forall i \in \I,
 \end{align}
 where $[x]_+=\max(x,0)$ and $[x]_-=\min(x,0)$.
\end{lemma}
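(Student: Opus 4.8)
The plan is to prove the two-sided bound \eqref{2.Linftybounds} by induction on $k$, treating the upper and lower bounds separately by a discrete maximum-principle argument applied to the linear scheme \eqref{1.sch}. For the induction step, suppose that $z^{k-1}_i \le \tilde{\Gamma}\,\Pi_{n=1}^{k-1}(1-\Delta t\|[\Delta_\mathcal{T}\mu^n]_+\|_{L^\infty})^{-1} =: \tilde{\Gamma}_{k-1}$ for all $i$. Rewrite \eqref{1.sch} at time step $k$ as
\[
z^k_i\Big(1+2\tfrac{\Delta t}{\Delta x^2}\mu^k_i\Big) = z^{k-1}_i + \tfrac{\Delta t}{\Delta x^2}\big(\mu^k_{i+1}z^k_{i+1} + \mu^k_{i-1}z^k_{i-1}\big).
\]
Let $i^\star$ be an index where $z^k$ attains its maximum $M_k=\max_i z^k_i$. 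Evaluating the identity at $i^\star$ and bounding $z^k_{i^\star\pm1}\le M_k$ gives $M_k(1+2\tfrac{\Delta t}{\Delta x^2}\mu^k_{i^\star}) \le z^{k-1}_{i^\star} + 2\tfrac{\Delta t}{\Delta x^2}\mu^k_{i^\star}M_k$. The issue is that $\mu^k_{i^\star}$ does not cancel cleanly the way it would for the plain heat equation, since the coefficient multiplies $\mu^k z^k$ rather than $z^k$; so instead I would work with the quantity $\mu^k z^k$ directly, or — cleaner — expand the discrete Laplacian as
\[
\big(\Delta_\mathcal{T}(\mu^k z^k)\big)_i = \big(\Delta_\mathcal{T}\mu^k\big)_i\, z^k_i + \text{(cross terms)},
\]
i.e. use a discrete product/Leibniz rule to split $\Delta_\mathcal{T}(\mu^k z^k)$ into a ``reaction-like'' part $(\Delta_\mathcal{T}\mu^k)_i z^k_i$ plus a genuinely diffusive part whose sign can be controlled at an extremum. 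Concretely, one has the identity
\[
\big(\Delta_\mathcal{T}(\mu^k z^k)\big)_i
= \big(\Delta_\mathcal{T}\mu^k\big)_i z^k_i
+ \tfrac{1}{\Delta x^2}\Big(\mu^k_{i+1}(z^k_{i+1}-z^k_i) - \mu^k_{i-1}(z^k_i - z^k_{i-1})\Big),
\]
wait — this needs to be checked, the correct symmetric splitting is
\[
\big(\Delta_\mathcal{T}(\mu^k z^k)\big)_i
= \big(\Delta_\mathcal{T}\mu^k\big)_i z^k_i
+ \tfrac{1}{\Delta x^2}\Big(\mu^k_{i+1}(z^k_{i+1}-z^k_i) + \mu^k_{i-1}(z^k_{i-1}-z^k_i)\Big) + \text{correction},
\]
so I would first establish the exact algebraic identity, then observe that at a maximum index $i^\star$ the bracketed diffusive term is $\le 0$ (it is a nonnegative-coefficient combination of $z^k_{i^\star\pm1}-z^k_{i^\star}\le 0$), whence
\[
\frac{M_k - z^{k-1}_{i^\star}}{\Delta t} \le \big(\Delta_\mathcal{T}\mu^k\big)_{i^\star} M_k \le \|[\Delta_\mathcal{T}\mu^k]_+\|_{L^\infty(\Tor)}\, M_k.
\]
Rearranging and using the time-step condition $\Delta t\|[\Delta_\mathcal{T}\mu^k]_+\|_{L^\infty}<1$ (so the factor $1-\Delta t\|[\Delta_\mathcal{T}\mu^k]_+\|_{L^\infty}$ is positive) gives $M_k \le z^{k-1}_{i^\star}(1-\Delta t\|[\Delta_\mathcal{T}\mu^k]_+\|_{L^\infty})^{-1} \le \tilde{\Gamma}_{k-1}(1-\Delta t\|[\Delta_\mathcal{T}\mu^k]_+\|_{L^\infty})^{-1} = \tilde{\Gamma}_k$, closing the upper induction.

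For the lower bound, the argument is symmetric: let $i_\star$ minimize $z^k$, note $z^k_{i_\star\pm1}-z^k_{i_\star}\ge0$ so the diffusive part is now $\ge 0$, and obtain $\frac{m_k - z^{k-1}_{i_\star}}{\Delta t}\ge (\Delta_\mathcal{T}\mu^k)_{i_\star} m_k \ge -\|[\Delta_\mathcal{T}\mu^k]_-\|_{L^\infty}\, m_k$ where $m_k=\min_i z^k_i\ge0$ (nonnegativity coming from Lemma~\ref{lem.exi}, which also guarantees $z^k$ is well-defined); rearranging yields $m_k\ge z^{k-1}_{i_\star}(1+\Delta t\|[\Delta_\mathcal{T}\mu^k]_-\|_{L^\infty})^{-1}$ and the induction proceeds. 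The base case $k=0$ is the hypothesis $\tilde\gamma\le z^0_i\le\tilde\Gamma$ with empty products equal to $1$.

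The main obstacle is purely algebraic: getting the discrete Leibniz-type splitting of $\Delta_\mathcal{T}(\mu^k z^k)$ exactly right so that the ``$(\Delta_\mathcal{T}\mu^k)_i\, z^k_i$ plus manifestly sign-definite diffusive remainder'' structure actually holds with the constant $[\Delta_\mathcal{T}\mu^k]_\pm$ appearing (and not, say, with first-difference quantities $\delta\mu^k$ that would give a worse bound). Once that identity is in hand, the maximum-principle step and the induction bookkeeping are routine. One should also double-check that the time-step restriction is used only where claimed — namely to keep $1-\Delta t\|[\Delta_\mathcal{T}\mu^k]_+\|_{L^\infty}>0$ in the upper estimate — and is not needed at all for the lower bound, consistent with the asymmetry in \eqref{2.Linftybounds}.
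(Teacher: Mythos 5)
Your proof is correct, but it implements the maximum principle differently from the paper. The paper's proof is a comparison argument through the M-matrix structure established in Lemma~\ref{lem.exi}: it computes $\bigl(\M^k(Z^k-\tilde{\Gamma}^k\mathds{1})\bigr)_i = z^{k-1}_i-\tilde{\Gamma}^k+\Delta t\,\tilde{\Gamma}^k(\Delta_\mathcal{T}\mu^k)_i\leq 0$ by the choice of the recursion for $\tilde{\Gamma}^k$, and then invokes inverse-positivity of $\M^k$ to conclude $Z^k\leq\tilde{\Gamma}^k$ componentwise. You instead evaluate the scheme at an extremal index and use the discrete Leibniz splitting of $\Delta_\mathcal{T}(\mu^k z^k)$; this is more local and self-contained (it does not re-use the monotonicity of $\M^k$ for the bound itself, only for existence and nonnegativity of $z^k$), at the cost of the explicit algebraic identity. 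On that identity: the ``main obstacle'' you flag is not one. Your two candidate splittings are in fact the same formula (note $-\mu^k_{i-1}(z^k_i-z^k_{i-1})=\mu^k_{i-1}(z^k_{i-1}-z^k_i)$), the identity
\begin{align*}
\bigl(\Delta_\mathcal{T}(\mu^k z^k)\bigr)_i=(\Delta_\mathcal{T}\mu^k)_i\,z^k_i+\frac{1}{\Delta x^2}\Bigl(\mu^k_{i+1}(z^k_{i+1}-z^k_i)+\mu^k_{i-1}(z^k_{i-1}-z^k_i)\Bigr)
\end{align*}
is exact with no correction term, and the remainder has the right sign at an extremum since $\mu^k\geq 0$. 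Two small points to make explicit: the nonnegativity $z^k\geq 0$ (from Lemma~\ref{lem.exi} and $\tilde{\gamma}\geq 0$) is needed in the \emph{upper}-bound step as well, to pass from $(\Delta_\mathcal{T}\mu^k)_{i^\star}M_k$ to $\|[\Delta_\mathcal{T}\mu^k]_+\|_{L^\infty(\Tor)}M_k$; and your observation that the time-step restriction enters only through positivity of $1-\Delta t\|[\Delta_\mathcal{T}\mu^k]_+\|_{L^\infty(\Tor)}$, hence is irrelevant for the lower bound, is consistent with the paper.
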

\begin{proof}
We will only deal with the upper bound in \eqref{2.Linftybounds} and the lower bound is obtained in the same way. Let $\M^k$ denote the tridiagonal matrix defined by \eqref{2.defMk} and define 
\[
\tilde{\Gamma}^k = \tilde{\Gamma} \, \Pi_{n=1}^k \left(1-\Delta t \|[\Delta_\mathcal{T}  \mu^n]_+\|_{L^\infty(\Tor)} \right)^{-1}.
\]
We proceed by induction. Since $\tilde{\Gamma}^0= \tilde{\Gamma}$ the bound holds by hypothesis at $k=0$. Then observe that for every $i \in \I$
 \begin{align*}
 (\M^k (Z^k-\tilde{\Gamma}^k))_i = Z^{k-1}-\tilde{\Gamma}^k + \Delta t \tilde{\Gamma}^k \, (\Delta_\mathcal{T}  \mu^k)_i\leq \tilde{\Gamma}^{k-1} - \tilde{\Gamma}^k + \Delta t \tilde{\Gamma}^k \, (\Delta_\mathcal{T}  \mu^k)_i.
 \end{align*}
 Now we notice that by construction
 \begin{align*}
 \tilde{\Gamma}^{k-1}-\tilde{\Gamma}^k = - \Delta t \tilde{\Gamma}^k \, \|[\Delta_\mathcal{T}  \mu^k]_+\|_{L^{\infty}(\Tor)}.
\end{align*}
 Then we easily deduce that for every $i \in \I$ it holds
 \begin{align*}
 (\M^k (Z^k-\tilde{\Gamma}^k))_{i} = - \Delta t \tilde{\Gamma}^k \left( \|[\Delta_\mathcal{T}  \mu^k]_+\|_{L^{\infty}(\Tor)} - (\Delta_\mathcal{T}  \mu^k)_i \right) \leq 0.
 \end{align*}
 Therefore, since $\M^k$ is a M-matrix we conclude that $z^k_i \leq \tilde{\Gamma}^k$ for all $i \in \I$ which concludes the proof of Lemma \ref{lem.Linftybounds}.
\end{proof}

The bounds of Lemma \ref{lem.Linftybounds} are exactly the discrete equivalent of the $L^\infty$ estimates established at the continuous level in \cite[Corollary 18]{DM21}.

\begin{proposition}\label{prop.estimate_kolmo}
Let us assume that it holds
\begin{align*}
\Delta t < 1/\max_{1 \leq k \leq N_T} \|[\Delta_\mathcal{T}  \mu^k]_+\|_{L^{\infty}(\Tor)}.
\end{align*}
Then the solution to \eqref{1.sch} satisfies the following estimate
\begin{multline*}
\|z^k\|_{L^2(\Tor)}^2 + \sum_{n=1}^{k} \Delta t \sum_{i \in \T} (\mu^n_i+\mu^n_{i+1}) \, \frac{(z^n_{i+1}-z^n_i)^2}{\Delta x} \\ 
\leq \Pi_{n=1}^k \left(1- \Delta t \|[\Delta_\mathcal{T}  \mu^n]_+\|_{L^{\infty}(\Tor)} \right)^{-1} \, \|z^0_i\|_{L^2(\Tor)}^2 , \quad \forall 1 \leq k \leq N_T.
\end{multline*}
\end{proposition}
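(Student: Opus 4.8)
The natural approach is a discrete energy estimate: test the scheme \eqref{1.sch} against $z^k$ itself (i.e. multiply the $i$-th equation by $\Delta x\, z^k_i$ and sum over $i\in\I$), then sum over time steps. The crucial algebraic identity is the discrete analogue of $\int z\,\partial_t z = \frac12\frac{d}{dt}\|z\|^2$, namely $(z^k_i-z^{k-1}_i)z^k_i \geq \frac12\big((z^k_i)^2 - (z^{k-1}_i)^2\big)$, which follows from $2a(a-b)\geq a^2-b^2$. After summation in time this telescopes and produces $\frac12\|z^k\|_{L^2}^2 - \frac12\|z^0\|_{L^2}^2$ on the left.

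The second ingredient is a discrete integration by parts on the Laplacian term. Writing $-\big(\Delta_\T(\mu^n z^n)\big)_i = \frac{1}{\Delta x^2}\big(\F^n_{i+\frac12}-\F^n_{i-\frac12}\big)$ with $\F^n_{i+\frac12} = \mu^n_i z^n_i - \mu^n_{i+1} z^n_{i+1}$ (up to the factor $\Delta x$ as in \eqref{5.sch.div}-\eqref{5.sch.flux}), the summation by parts gives
\begin{align*}
-\sum_{i\in\I}\Delta x\, z^n_i \big(\Delta_\T(\mu^n z^n)\big)_i = \sum_{i\in\I}\frac{(\mu^n_i z^n_i - \mu^n_{i+1}z^n_{i+1})(z^n_{i+1}-z^n_i)}{\Delta x}.
\end{align*}
Now the key pointwise manipulation: I expand $\mu^n_i z^n_i - \mu^n_{i+1}z^n_{i+1} = \mu^n_{i+\frac12}(z^n_i - z^n_{i+1}) + z^n_{i+\frac12}(\mu^n_i-\mu^n_{i+1})$ using the centered averages, exactly as in \eqref{5.sch.flux}. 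The first piece contributes $-\sum_i \mu^n_{i+\frac12}\frac{(z^n_{i+1}-z^n_i)^2}{\Delta x} = -\frac12\sum_i(\mu^n_i+\mu^n_{i+1})\frac{(z^n_{i+1}-z^n_i)^2}{\Delta x}$, which is precisely the dissipation term we want on the left-hand side of the claimed inequality. The second piece, $-\sum_i z^n_{i+\frac12}(\mu^n_i-\mu^n_{i+1})\frac{z^n_{i+1}-z^n_i}{\Delta x}$, must be controlled: a further summation by parts rewrites it in terms of $\sum_i \big(\Delta_\T\mu^n\big)_i (z^n_i)^2 \Delta x$ (the discrete analogue of $\int z^2 \Delta\mu$, obtained from $\int \nabla(z^2)\cdot\nabla\mu = -\int z^2\Delta\mu$), which is then bounded above by $\|[\Delta_\T\mu^n]_+\|_{L^\infty(\Tor)}\|z^n\|_{L^2(\Tor)}^2$ using nonnegativity of $z^n$ from Lemma \ref{lem.exi}.

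Collecting everything and multiplying by $2\Delta t$, one arrives at $\|z^k\|_{L^2}^2 + \sum_{n=1}^k \Delta t\sum_i(\mu^n_i+\mu^n_{i+1})\frac{(z^n_{i+1}-z^n_i)^2}{\Delta x} \leq \|z^0\|_{L^2}^2 + \sum_{n=1}^k \Delta t\,\|[\Delta_\T\mu^n]_+\|_{L^\infty}\|z^n\|_{L^2}^2$, and a discrete Grönwall argument over $k$ — using the time-step restriction $\Delta t < 1/\max_n\|[\Delta_\T\mu^n]_+\|_{L^\infty}$ to ensure the coefficient $1-\Delta t\|[\Delta_\T\mu^n]_+\|_{L^\infty}$ stays positive so one can divide — yields the stated product bound $\Pi_{n=1}^k(1-\Delta t\|[\Delta_\T\mu^n]_+\|_{L^\infty})^{-1}\|z^0\|_{L^2}^2$.

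\textbf{Main obstacle.} The delicate point is the treatment of the mixed term involving $\mu^n_i - \mu^n_{i+1}$: one has to recognize that the naive bound (Young's inequality splitting it between dissipation and a remainder) would require absorbing part of it into the dissipation term, which is awkward since the weights $\mu^n_{i+\frac12}$ can degenerate. Instead the clean route is the second discrete integration by parts turning it exactly into $-\sum_i\big(\Delta_\T\mu^n\big)_i(z^n_i)^2\Delta x$ with no leftover gradient terms — this is the discrete miracle that makes the estimate work, and getting the index bookkeeping right (the product rule for discrete differences, $\mu_{i+1}z_{i+1}-\mu_i z_i$, and the symmetry used to pass from the interface sum to the cell sum) is where care is needed. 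Everything else is routine.
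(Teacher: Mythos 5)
Your proposal is correct and follows essentially the same route as the paper's proof: testing the scheme against $z^k$, splitting the flux via the centered averages $\mu^n_{i+\frac12}$, $z^n_{i+\frac12}$, performing a second summation by parts that turns the mixed term exactly into $-\tfrac12\sum_{i}\Delta x\,(\Delta_\T\mu^n)_i\,(z^n_i)^2$, and closing with the discrete Gr\"onwall lemma. Two minor quibbles: that identity carries a factor $\tfrac12$ which your narration drops (your final displayed inequality is nonetheless the correct one, consistent with $\mu^n_{i+\frac12}=\tfrac12(\mu^n_i+\mu^n_{i+1})$), and the nonnegativity of $z^n$ is not actually needed to bound that term, since $(z^n_i)^2\geq 0$ already suffices to estimate it by $\|[\Delta_\T\mu^n]_+\|_{L^\infty(\Tor)}\|z^n\|_{L^2(\Tor)}^2$.
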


\begin{proof}
Let $k \geq 1$ be fixed and let us first notice that we can rewrite for every $i \in \I$ equation \eqref{1.sch} as
\begin{align*}
\Delta x \frac{z^k_i-z^{k-1}_i}{\Delta t} + \mu^k_{i+\frac{1}{2}} \frac{(z^k_i-z^k_{i+1})}{\Delta x} - \mu^k_{i-\frac{1}{2}} \frac{(z^k_{i-1}-z^k_i)}{\Delta x} + z^k_{i+\frac{1}{2}} \frac{(\mu^k_i-\mu^k_{i+1})}{\Delta x} - z^k_{i-\frac{1}{2}} \frac{(\mu^k_{i-1}-\mu^k_i)}{\Delta x}=0,
\end{align*}
 where
\begin{align*}
\mu^k_{i+\frac{1}{2}} = \frac{\mu^k_i+\mu^k_{i+1}}{2}, \quad z^k_{i+\frac{1}{2}} = \frac{z^k_i+z^k_{i+1}}{2}, \quad \forall i \in \I.
\end{align*}
Now we multiply the above equation by $\Delta t z^k_i$ and we sum over $i \in \I$, we obtain
\begin{align*}
I_3 + I_4 + I_5=0,
\end{align*}
where
\begin{align*}
I_3 &= \sum_{i \in \I} \Delta x (z^k_i-z^{k-1}_i) z^k_i,\\
I_4 &= \Delta t \sum_{i \in \I} \left( \mu^k_{i+\frac{1}{2}} \frac{(z^k_i-z^k_{i+1})}{\Delta x} - \mu^k_{i-\frac{1}{2}} \frac{(z^k_{i-1}-z^k_i)}{\Delta x}\right) z^k_i,\\
I_5 &= \Delta t \sum_{i \in \I} \left( z^k_{i+\frac{1}{2}} \frac{(\mu^k_i-\mu^k_{i+1})}{\Delta x} - z^k_{i-\frac{1}{2}} \frac{(\mu^k_{i-1}-\mu^k_i)}{\Delta x} \right) z^k_i.
\end{align*}
For $I_3$ using the inequality $(a-b)a \geq (a^2-b^2)/2$ we obtain
\begin{align}\label{2.I1}
I_3 \geq \frac{1}{2} \sum_{i \in \I} \Delta x \left(|z^k_i|^2 - |z^{k-1}_i|^2\right).
\end{align}
For $I_4$ applying a discrete integration by parts yields
\begin{align}\label{2.I2}
I_4 = \Delta t \sum_{i \in \I} \mu^k_{i+\frac{1}{2}} \, \frac{(z^k_{i+1}-z^k_i)^2}{\Delta x}.
\end{align}
Now we rewrite $I_5$ as
\begin{align*}
I_5 = -\frac{\Delta t}{2} \sum_{i \in \I} \Delta x |z^k_i|^2 (\Delta_\mathcal{T}  \mu^k)_i +\frac{\Delta t}{2\Delta x} \sum_{i \in \I} \left( z^k_{i+1} z^k_i (\mu^k_i - \mu^k_{i+1}) - z^k_{i-1} z^k_i ( \mu^k_{i-1}-\mu^k_i) \right),
\end{align*}
and reordering the terms in the r.h.s. the second sum vanishes and we have
\begin{align}\label{2.I3}
I_5 = -\frac{\Delta t}{2} \sum_{i \in \I} \Delta x |z^k_i|^2 (\Delta_\mathcal{T}  \mu^k)_i.
\end{align}
Gathering \eqref{2.I1}--\eqref{2.I3} we end up with
\begin{align*}
\frac{1}{2} \sum_{i \in \I} \Delta x |z^k_i|^2 + \Delta t \sum_{i \in \I} \mu^k_{i+\frac{1}{2}} \frac{(z^k_{i+1}-z^k_i)^2}{\Delta x} \leq \frac{1}{2} \sum_{i \in \I} \Delta x |z^{k-1}_i|^2  + \frac{\Delta t}{2} \|[\Delta_\mathcal{T}  \mu^k]_+\|_{L^{\infty}(\Tor)}  \sum_{i \in \I} \Delta x |z^k_i|^2.
\end{align*}
We deduce that
\begin{multline*}
\frac{1}{2} \sum_{i \in \I} \Delta x |z^k_i|^2 + \sum_{n=1}^k \Delta t \sum_{i \in \I} \mu^n_{i+\frac{1}{2}} \frac{(z^n_{i+1}-z^n_i)^2}{\Delta x} \\
\leq \frac{1}{2} \sum_{i \in \I} \Delta x |z^0_i|^2  + \sum_{n=1}^k \frac{\Delta t}{2} \|[\Delta_\mathcal{T}  \mu^n]_+\|_{L^{\infty}(\Tor)}  \sum_{i \in \I} \Delta x |z^n_i|^2.
\end{multline*}
One ends the proof of Proposition \ref{prop.estimate_kolmo} thanks to a discrete Gr\"onwall inequality.
\end{proof}

\subsection{Study of the dual problem}

The main objective of this section is to establish a discrete counterpart of the so-called duality inequality for the solution to \eqref{1.sch}, see for instance \cite[Theorem 3]{AM20}. In this aim, following \cite{AM20}, we introduce a ``dual'' scheme associated to \eqref{1.sch}. Let $v^{N_T+1}_i$ be given for every $i \in \I$, then for $1\leq k \leq N_T$ we want to determine the solution to the following implicit backward in time scheme
\begin{align}\label{3.sch.dual}
 \frac{v^k_i-v^{k+1}_i}{\Delta t} - \mu^k_i \left(\Delta_\mathcal{T}  v^k \right)_i 	=  S^k_i \quad \forall i \in \I,
\end{align}
 where $\mu^k_i$ is given and non-negative and $S^k=(S^k_0,\ldots,S^k_{N-1})$ is some given vector in $\R^N$ for all $1 \leq k \leq N_T$. Let us notice that \eqref{3.sch.dual} define a set of linear equation which can be rewritten as
\begin{align}\label{3.pb.dual}
(\mathbb{M}^k)^\top V^k = V^{k+1}+\Delta t S^k, \quad \forall 1 \leq k \leq N_T,
\end{align}
where $\mathbb{M}^k$ is the tridiagonal matrix given by \eqref{2.defMk}. Therefore, it follows directly from the proof of Lemma \ref{lem.exi} that the problem \eqref{3.pb.dual} admits a unique solution for every $1 \leq k \leq N_T$.

Prior to the proof of the discrete duality estimate, see Theorem \ref{thm.duality} below, we establish some uniform estimates satisfied by the solution of \eqref{3.pb.dual}.

\begin{proposition}\label{prop.estimates_pbdual}
Assume that $\min_{i \in \I} \mu^k_i > 0$ for every $0 \leq k \leq N_T$ and that $v^{N_T+1}_i=0$ for every $i \in \I$. Then the solution to \eqref{3.pb.dual} satisfies for every $1 \leq k \leq N_T$ the following estimate
\begin{align}\label{3.estim_pbdual1}
|v^k|^2_{1,2,\T} + \sum_{n=k}^{N_T} \Delta t \sum_{i \in \I} \mu^n_i (\Delta_\mathcal{T}  v^n)^2_i\Delta x
\leq \|\mu^{-1/2} S\|^2_{L^2(Q_T)},
 \end{align}
 and there exists a constant $C>0$ independent of $\Delta x$ such that
 \begin{align}\label{3.estim_pbdual2}
 \|v^k\|^2_{L^2(\Tor)} \leq C (1 + \|\mu\|_{L^1(Q_T)}) \, \|\mu^{-1/2} S\|^2_{L^2(Q_T)}, \quad \forall 1 \leq k \leq N_T,
 \end{align}
 where $\mu$ and $S$ denote the piecewise reconstruction functions in $\mathcal{H}_{\D}$ associated to the vectors $(\mu^k)_{1 \leq k \leq N_T}$ and $(S^k)_{1\leq k \leq N_T}$.
 \end{proposition}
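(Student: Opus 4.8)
The plan is to transport to the discrete level the duality argument of \cite{AM20,DM21}. First I would prove the energy bound \eqref{3.estim_pbdual1} by testing the backward scheme \eqref{3.sch.dual} against a suitable multiple of the discrete Laplacian $\Delta_\mathcal{T} v^k$; then I would deduce the $L^2(\Tor)$ estimate \eqref{3.estim_pbdual2} by decomposing $v^k$ into its spatial average and a zero-average remainder, controlling the average through the (non mass-conservative) evolution of $\int_\Tor v^k$ and the remainder by a discrete Poincar\'e--Wirtinger inequality. Existence and uniqueness of the $v^k$ are already known (see the discussion following \eqref{3.pb.dual}).

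\textbf{The energy estimate.} Fix $k$, take $k\leq n\leq N_T$, multiply \eqref{3.sch.dual} at time index $n$ by $-\Delta t\,\Delta x\,(\Delta_\mathcal{T} v^n)_i$ and sum over $i\in\I$. A discrete summation by parts turns the discrete-time term into $|v^n|^2_{1,2,\T}$ plus a cross term in $v^n,v^{n+1}$ bounded from below by Young's inequality by $-\tfrac12|v^{n+1}|^2_{1,2,\T}-\tfrac12|v^n|^2_{1,2,\T}$; the diffusive term becomes $\Delta t\,\Delta x\sum_{i\in\I}\mu^n_i(\Delta_\mathcal{T} v^n)_i^2$; and, using $\mu^n_i>0$, the source term obeys
\[
-\Delta t\,\Delta x\sum_{i\in\I}S^n_i(\Delta_\mathcal{T} v^n)_i\;\leq\;\frac{\Delta t\,\Delta x}{2}\sum_{i\in\I}\frac{(S^n_i)^2}{\mu^n_i}\;+\;\frac{\Delta t\,\Delta x}{2}\sum_{i\in\I}\mu^n_i(\Delta_\mathcal{T} v^n)_i^2\,.
\]
Absorbing the last term on the left yields $\tfrac12|v^n|^2_{1,2,\T}-\tfrac12|v^{n+1}|^2_{1,2,\T}+\tfrac{\Delta t\,\Delta x}{2}\sum_i\mu^n_i(\Delta_\mathcal{T} v^n)_i^2\leq\tfrac{\Delta t\,\Delta x}{2}\sum_i(S^n_i)^2/\mu^n_i$; summing over $n$ from $k$ to $N_T$, the seminorm terms telescope, $|v^{N_T+1}|_{1,2,\T}=0$ since $v^{N_T+1}=0$, and the right-hand side equals $\|\mu^{-1/2}S\|^2_{L^2(Q_T)}$ by definition of the discrete norm, which gives \eqref{3.estim_pbdual1}.

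\textbf{The $L^2$ estimate.} Let $m^k=\Delta x\sum_{i\in\I}v^k_i=\int_\Tor v^k$. Summing \eqref{3.sch.dual} over $i\in\I$ (times $\Delta x$), the term $\Delta x\sum_i(\Delta_\mathcal{T} v^k)_i$ telescopes to $0$ but $\Delta x\sum_i\mu^k_i(\Delta_\mathcal{T} v^k)_i$ does \emph{not}, leaving the recursion $(m^k-m^{k+1})/\Delta t=\Delta x\sum_i\mu^k_i(\Delta_\mathcal{T} v^k)_i+\Delta x\sum_i S^k_i$. Summing this from $n=k$ to $N_T$ (times $\Delta t$), using $m^{N_T+1}=0$, and bounding each spatial sum by Cauchy--Schwarz with weight $\mu^n_i$ and then by Cauchy--Schwarz in $n$, I would obtain
\[
|m^k|\;\leq\;\|\mu\|^{1/2}_{L^1(Q_T)}\Big[\Big(\textstyle\sum_{n=k}^{N_T}\Delta t\,\Delta x\sum_i\mu^n_i(\Delta_\mathcal{T} v^n)_i^2\Big)^{1/2}+\|\mu^{-1/2}S\|_{L^2(Q_T)}\Big]\;\leq\;2\,\|\mu\|^{1/2}_{L^1(Q_T)}\,\|\mu^{-1/2}S\|_{L^2(Q_T)}\,,
\]
the last step being \eqref{3.estim_pbdual1} applied to the first bracketed term. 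Finally, since $v^k-m^k$ has zero average, $\|v^k\|^2_{L^2(\Tor)}=\|v^k-m^k\|^2_{L^2(\Tor)}+|m^k|^2$, and a discrete Poincar\'e--Wirtinger inequality on $\Tor$ (with mesh-independent constant $C_P$) gives $\|v^k-m^k\|_{L^2(\Tor)}\leq C_P\,|v^k|_{1,2,\T}$. Using \eqref{3.estim_pbdual1} once more yields $\|v^k\|^2_{L^2(\Tor)}\leq(C_P^2+4\,\|\mu\|_{L^1(Q_T)})\,\|\mu^{-1/2}S\|^2_{L^2(Q_T)}$, that is, \eqref{3.estim_pbdual2}.

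\textbf{Main obstacle.} Estimate \eqref{3.estim_pbdual1} is essentially routine (summation by parts, two Young inequalities, a telescoping sum). The delicate point is \eqref{3.estim_pbdual2}: the dual scheme is \emph{not} conservative for $v$ — the sum $\sum_i\mu^k_i(\Delta_\mathcal{T} v^k)_i$ does not vanish — so the average $\int_\Tor v^k$ genuinely moves and must be tracked by hand. This is what forces the weighted Cauchy--Schwarz splitting above and is exactly the origin of the $(1+\|\mu\|_{L^1(Q_T)})$ factor; one also needs the discrete Poincar\'e--Wirtinger constant on the uniform mesh of $\Tor$ to be independent of $\Delta x$, which is classical.
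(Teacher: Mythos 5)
Your proposal is correct and follows essentially the same route as the paper: testing the backward scheme against $-\Delta t\,\Delta x\,(\Delta_\mathcal{T} v^n)_i$, using Young's inequality on the time and source terms and telescoping to get \eqref{3.estim_pbdual1}, then tracking the spatial average via the non-conservative recursion, applying weighted Cauchy--Schwarz together with \eqref{3.estim_pbdual1}, and concluding with the discrete Poincar\'e--Wirtinger inequality (the paper invokes \cite[Lemma 6]{BHR19} for the mesh-independent constant). The only cosmetic difference is that you make the mean-plus-fluctuation decomposition of $v^k$ explicit where the paper simply cites the Poincar\'e--Wirtinger step.
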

\begin{proof}
Let us first establish estimate \eqref{3.estim_pbdual1}. In this purpose let $1 \leq k \leq N_T$ be fixed. We multiply equation \eqref{3.sch.dual} by $\Delta t(-v^k_{i+1}+2v^k_i-v^k_{i-1})/\Delta x$, we sum over $i \in \I$ and we apply definition \eqref{1.discreteLapl} of the operator $\Delta_\mathcal{T} $ and we obtain
\begin{align*}
 I_{6} + I_{7} = I_{8},
\end{align*}
 with
\begin{align*}
 I_{6} &= \sum_{i \in \I} (v^k_i - v^{k+1}_i) \frac{(-v^k_{i+1}+2v^k_i-v^k_{i-1})}{\Delta x},\\
 I_{7} &= \Delta t \sum_{i \in \I} \mu^k_i \frac{(v^k_{i+1}-2 v^k_i + v^k_{i-1})^2}{\Delta x^3},\\
 I_{8} &= \Delta t \sum_{i \in \I} S^k_i \frac{(-v^k_{i+1}+2v^k_i-v^k_{i-1})}{\Delta x}.
\end{align*}
 For $I_{6}$ reordering the terms leads to
 \begin{align*}
 I_{6} =\frac{1}{\Delta x} \sum_{i \in \I} (v^k_{i+1}-v^k_i) \left[ (v^k_{i+1}-v^k_i) - (v^{k+1}_{i+1}-v^{k+1}_i) \right],
 \end{align*}
 and using the inequality $a(a-b) \geq (a^2-b^2)/2$ we get
 \begin{align}\label{3.I4}
 I_{6} \geq \frac{1}{2} \sum_{i \in \I} \left[ \frac{(v^k_{i+1}-v^k_i)^2}{\Delta x} - \frac{(v^{k+1}_{i+1}-v^{k+1}_i)^2}{\Delta x} \right].
 \end{align}
 For $I_{8}$ applying the Cauchy-Schwarz and Young inequality yield
 \begin{align}\label{3.I6}
 |I_{8}| \leq \frac{\Delta t}{2} \sum_{i \in \I} \Delta x (\mu^k_i)^{-1} |S^k_i|^2 + \frac{\Delta t}{2} \sum_{i \in \I} \mu^k_i \frac{(-v^k_{i+1}+2 v^k_i - v^k_{i-1})^2}{\Delta x^3}.
 \end{align}
 Collecting \eqref{3.I4}--\eqref{3.I6} we obtain
 \begin{align*}
 \sum_{i \in \I} \frac{(v^k_{i+1}-v^k_i)^2}{\Delta x} + \Delta t \sum_{i \in \I} \mu^k_i \frac{(\Delta_\mathcal{T}  v^k)^2_i}{\Delta x} 
 \leq  \sum_{i \in \I} \frac{(v^{k+1}_{i+1}-v^{k+1}_i)^2}{\Delta x} + \Delta t \sum_{i \in \I} \Delta x (\mu^k_i)^{-1} |S^k_i|^2.
 \end{align*}
 In order to prove \eqref{3.estim_pbdual1} it remains to sum over $n \in \lbrace k,\ldots,N_T \rbrace$.

 We now prove estimate \eqref{3.estim_pbdual2}. In this purpose we multiply \eqref{3.sch.dual} by $\Delta x \Delta t$, we sum over $i \in \I$ and $n \in \lbrace k,\ldots,N_T \rbrace$ and we obtain
 \begin{align*}
 \sum_{i \in \I} \Delta x v^k_i = \sum_{n=k}^{N_T} \Delta t \sum_{i \in \I} \mu^k_i \frac{v^n_{i+1}-2 v^n_i + v^n_{i-1}}{\Delta x} + \sum_{n=k}^{N_T} \Delta t \sum_{i \in \I} \Delta x S^n_i.
 \end{align*}
 Applying the Cauchy-Schwarz inequality leads to
 \begin{multline*}
 \left|\sum_{i \in \I} \Delta x v^k_i \right| \leq \left(\sum_{n=k}^{N_T} \Delta t \sum_{i \in \I} \Delta x \mu^n_i \right)^{1/2} \Bigg[ \left(\sum_{n=k}^{N_T} \Delta t \sum_{i \in \I} \mu^n_i \frac{(\Delta_\mathcal{T}  v^n)^2_i}{\Delta x} \right)^{1/2}\\ +  \left(\sum_{n=k}^{N_T} \Delta t \sum_{i \in \I} \Delta x (\mu^n_i)^{-1} |S^n_i|^2 \right)^{1/2} \Bigg].
 \end{multline*}
 Using estimate \eqref{3.estim_pbdual1} we obtain
 \begin{align*}
 \left|\sum_{i \in \I} \Delta x v^k_i \right| \leq 2 \left(\sum_{n=k}^{N_T} \Delta t \sum_{i \in \I} \Delta x \mu^n_i \right)^{1/2} \left(\sum_{n=k}^{N_T} \Delta t \sum_{i \in \I} \Delta x (\mu^n_i)^{-1} |S^n_i|^2 \right)^{1/2}.
 \end{align*}
 Now it remains to apply the discrete Poincar\'e-Wirtinger inequality on the torus obtained in \cite[Lemma 6]{BHR19} in order to conclude the proof of Proposition \ref{prop.estimates_pbdual}.
 \end{proof}
 
 We are now in position to establish the discrete dual estimate.

\begin{theorem}\label{thm.duality}
Let us assume that $\min_{i \in \I} \mu^k_i > 0$ for every $1 \leq k \leq N_T$. Then there exists a constant $C>0$ independent of $\Delta x$ such that the solution $(Z^k)_{1 \leq k \leq N_T}$ to \eqref{1.sch} satisfies
\begin{align*}
\|\mu^{1/2} z \|_{L^2(Q_T)} \leq C \left( 1 + \|\mu\|_{L^1(Q_T)}^{1/2} \right) \, \|z^0\|_{L^2(\Tor)}.
\end{align*}
\end{theorem}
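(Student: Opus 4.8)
The plan is to exploit the dual formulation of the $L^2(Q_T)$ norm given in \eqref{3.defdual.norm}, and to pair the primal scheme \eqref{1.sch} against the dual scheme \eqref{3.sch.dual} so that the cross terms telescope. Concretely, let $f \in \mathcal{H}_\D$ be an arbitrary test function with $\sum_{k} \Delta t \sum_i \Delta x |f^k_i|^2 = 1$, and take $S^k_i = \mu^k_i{}^{1/2} f^k_i$ in the dual problem \eqref{3.pb.dual} with terminal datum $v^{N_T+1} \equiv 0$; this is admissible since $\min_i \mu^k_i > 0$ by hypothesis, so the solution $(v^k)$ exists, is unique, and Proposition \ref{prop.estimates_pbdual} applies with $\|\mu^{-1/2} S\|_{L^2(Q_T)}^2 = \|f\|_{L^2(Q_T)}^2 = 1$.

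Next I would compute $\sum_{k=1}^{N_T} \Delta t \sum_{i\in\I} \Delta x\, (\mu^k_i{}^{1/2} z^k_i)\, f^k_i = \sum_{k} \Delta t \sum_i \Delta x\, z^k_i\, S^k_i$ by substituting $S^k_i$ from \eqref{3.sch.dual}. The term $\sum_i \Delta x\, z^k_i \big(\tfrac{v^k_i - v^{k+1}_i}{\Delta t} - \mu^k_i (\Delta_\T v^k)_i\big)$, multiplied by $\Delta t$ and summed in $k$, is handled by a discrete summation by parts in $k$ (Abel summation) on the first piece and a discrete summation by parts in space on the second piece; the spatial integration by parts moves the discrete Laplacian from $v^k$ onto $\mu^k z^k$, producing exactly $z^k_i - z^{k-1}_i$ via the primal scheme \eqref{1.sch} (note $\sum_i \Delta x\, \mu^k_i z^k_i (\Delta_\T v^k)_i = \sum_i \Delta x\, v^k_i (\Delta_\T(\mu^k z^k))_i = \sum_i \Delta x\, v^k_i \tfrac{z^k_i - z^{k-1}_i}{\Delta t}$). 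Combining the two summations-by-parts, all interior terms cancel and one is left only with boundary-in-time contributions: using $v^{N_T+1} = 0$ one obtains $\sum_k \Delta t \sum_i \Delta x\, z^k_i S^k_i = \sum_i \Delta x\, z^0_i v^1_i$ (up to sign bookkeeping, this is the standard primal-dual pairing identity). Then Cauchy–Schwarz gives $|\sum_i \Delta x\, z^0_i v^1_i| \leq \|z^0\|_{L^2(\Tor)} \|v^1\|_{L^2(\Tor)}$, and estimate \eqref{3.estim_pbdual2} bounds $\|v^1\|_{L^2(\Tor)}^2 \leq C(1 + \|\mu\|_{L^1(Q_T)})$. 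Taking the supremum over all admissible $f$ and invoking \eqref{3.defdual.norm} yields $\|\mu^{1/2} z\|_{L^2(Q_T)} \leq C(1 + \|\mu\|_{L^1(Q_T)}^{1/2}) \|z^0\|_{L^2(\Tor)}$.

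The main obstacle I anticipate is carrying out the double summation by parts cleanly on the periodic mesh while keeping careful track of the time-index shifts: the primal scheme couples $z^k$ to $z^{k-1}$ whereas the dual scheme couples $v^k$ to $v^{k+1}$, so the Abel summation telescopes a sum like $\sum_{k=1}^{N_T}(v^k - v^{k+1}) z^k$ against $\sum_{k=1}^{N_T} v^k(z^k - z^{k-1})$, and one must verify that the net result is precisely the single boundary term at $k=0$ (using $v^{N_T+1}=0$ and the given $z^0$), with no leftover $k$-sum. One should also double-check that the reaction-free structure ($R_j = 0$, i.e. (H5)) is what makes the primal scheme homogeneous so that the pairing collapses to the initial datum alone; this is already built into \eqref{1.sch}. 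Everything else — the spatial summations by parts and the Cauchy–Schwarz/Young steps — is routine given Proposition \ref{prop.estimates_pbdual}.
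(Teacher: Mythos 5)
Your proposal is correct and follows essentially the same route as the paper's proof: the primal--dual pairing identity obtained by double summation by parts with $v^{N_T+1}=0$, the bound $\|v^1\|_{L^2(\Tor)}^2\leq C(1+\|\mu\|_{L^1(Q_T)})\|\mu^{-1/2}S\|_{L^2(Q_T)}^2$ from Proposition~\ref{prop.estimates_pbdual}, and the dual characterization \eqref{3.defdual.norm} of the $L^2(Q_T)$ norm. The only (cosmetic) difference is that you fix the test function $f$ first and construct $S=\mu^{1/2}f$ via the dual scheme, whereas the paper derives the identity for an arbitrary $v$ and only at the end observes that every $f$ is realized as $\mu^{-1/2}S$ for some $v$; both orderings rely on the same solvability of \eqref{3.pb.dual}.
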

 \begin{proof}
 Let $(v^k_i)_{i \in \I}$ be given in $\R^N$ for every $1\leq k \leq N_T+1$ with $v^{N_T+1}_i=0$ for all $i \in \I$. Now for $1 \leq k \leq N_T$, we multiply \eqref{1.sch} by $\Delta t \Delta x v^k_i$, we sum over $i \in \I$ and $k \in \lbrace 1,\ldots,N_T \rbrace$, we obtain
\begin{align*}
\sum_{k=1}^{N_T} \sum_{i \in \I} \Delta x (z^k_i - z^{k-1}_i) v^k_i - \sum_{k=1}^{N_T} \Delta t \sum_{i \in \I} \Delta x \left( \Delta_\mathcal{T}  (z^k \mu^k)\right)_i v^k_i = 0.
\end{align*}
 Reordering the terms we have
\begin{align}\label{3.dualesti_1}
\sum_{k=1}^{N_T} \Delta t \sum_{i \in \I}  \Delta x z^k_i \left(\frac{(v^k_i - v^{k+1}_i)}{\Delta t} - \mu^k_i \left(\Delta_\mathcal{T}  v^k\right)_i \right) =  \sum_{i \in \I} \Delta x z^0_i v^1_i.
\end{align}
 We define $(S^k_i)_{i \in \I}$ by
\begin{align*}
S^k_i = \frac{(v^k_i - v^{k+1}_i)}{\Delta t} - \mu^k_i \left(\Delta_\mathcal{T}  v^k\right)_i, \quad \forall i \in \I, \, 1\leq k \leq N_T.
\end{align*}
We first notice that $(S^k_i)_{i\in\I}$ is well-defined since we know that equation \eqref{3.sch.dual} is well-posed. Besides applying the Cauchy-Schwarz inequality in \eqref{3.dualesti_1} we get
\begin{align*}
\left|\sum_{k=1}^{N_T} \Delta t \sum_{i \in \I} \Delta x z^k_i S^k_i \right| \leq \|z^0\|_{L^2(\Tor)} \|v^1\|_{L^2(\Tor)}.
\end{align*}
Now, thanks to \eqref{3.estim_pbdual2} we deduce that
\begin{align*}
\left|\sum_{k=1}^{N_T} \Delta t \sum_{i \in \I} \Delta x (\mu^k_i)^{1/2} z^k_i \, (\mu^k_i)^{-1/2} S^k_i \right| \leq C \left(1+\|\mu\|_{L^1(Q_T)}^{1/2} \right) \|\mu^{-1/2} S\|_{L^2(Q_T)} \|z^0\|_{L^2(\Tor)}.
\end{align*}
In the remaining of the proof we want to use the dual definition \eqref{3.defdual.norm} of the norm $\|\cdot\|_{L^2(Q_T)}$. Observe that for any vector $F^k = (f^k_i)_{i \in \I}$, there exists a unique $V = (v_i)_{i \in \I}$ such that
\begin{align*}
(\mu^k_i)^{-1/2} \frac{v_i-v^{k+1}_i}{\Delta t} - (\mu^k_i)^{1/2} \, \left(\Delta_\mathcal{T}  v\right)_i = f^k_i, \quad \forall i \in \I,
\end{align*}
where $(v^{k+1}_i)_{i \in \I}$ is a given vector. Indeed, this system rewrites $(\mathbb{M}^k)^\top V = V^{k+1}+\Delta t \mathbb{D}^k F^k$, for all $1 \leq k \leq N_T$ where $\mathbb{M}^k$ is the invertible tridiagonal matrix given by \eqref{2.defMk} and  $\mathbb{D}^k = \mathrm{diag}((\mu^k_i))_{i \in \I}$. We deduce thanks to formula \eqref{3.defdual.norm}  that it holds
\begin{align*}
\left(\sum_{k=1}^{N_T} \Delta t \sum_{i \in \I} \Delta x \mu^k_i |z^k_i|^2 \right)^{1/2} \leq C \left(1+\|\mu\|_{L^1(Q_T)}^{1/2} \right) \|z^0\|_{L^2(\Tor)}.
\end{align*}
This concludes the proof of Theorem \ref{thm.duality}.
\end{proof}

\subsection{Proof of Theorem \ref{thm.prop}}\label{sec.proof.thm.prop}

We are now able to prove Theorem~\ref{thm.prop}.\medskip

\paragraph{\emph{Step 1: Maximum principle}} Let us first prove the maximum principle satisfies by the solutions to \eqref{5.ic}--\eqref{5.mu2}. Let us notice that for every $1 \leq k \leq N_T$ we have
 \begin{align*}
 \max_{i \in \I} \left|\left(\Delta_\mathcal{T}  \mu^k_1\right)_i \right| &= 
  \max_{i \in \I} \Bigg|d_{11} \sum_{j \in \I} \dx u^k_{1,j} \left(\Delta_\mathcal{T}  \sigma_1\right)_{i-j} +  d_{12} \sum_{j \in \I} \dx u^k_{2,j} \left(\Delta_\mathcal{T}  \rho\right)_{i-j} \Bigg|\\
 &\leq d_{11} \|\Delta_\mathcal{T}  \sigma_1\|_{L^\infty(\Tor)} \sum_{j\in\I} \dx u^k_{1,j} + d_{12} \|\Delta_\mathcal{T}  \rho\|_{L^{\infty}(\Tor)}  \sum_{j \in \I} \Delta x  \,u^k_{2,j}.
 \end{align*}
 Now, let us recall that $m^0_j = \|u^0_j\|_{L^1(\Tor)}$ for $j=1,2$, then thanks to the mass conservation property \eqref{6.conser.mass} we obtain
 \begin{align*}
 \max_{i \in \I} \left|\left(\Delta_\mathcal{T}  \mu^k_1\right)_i \right| &\leq d_{11} m^0_1 \, \|\Delta_\mathcal{T}  \sigma_1\|_{L^{\infty}(\Tor)} +  d_{12} m^0_2\, \|\Delta_\mathcal{T}  \rho\|_{L^{\infty}(\Tor)}.
 \end{align*}
 Similarly we establish the following bound
 \begin{align*}
 \max_{i \in \I} \left|\left(\Delta_\mathcal{T}  \mu^k_2\right)_i \right| \leq d_{22} m^0_2 \, \|\Delta_\mathcal{T}  \sigma_2\|_{L^{\infty}(\Tor)} + d_{21} m^0_1 \, \|\Delta_\mathcal{T}  \rho\|_{L^{\infty}(\Tor)}.
 \end{align*}
 As a direct consequence of the previous estimates and \eqref{2.Linftybounds} (with $\tilde{\gamma}=\gamma$ and $\tilde{\Gamma}=\Gamma$) one obtains point \emph{(i)} of  Theorem~\ref{thm.prop}.\medskip
 
\paragraph{\emph{Step 2: Duality estimate}} Let us now show the discrete duality estimate satisfied by the solutions to \eqref{5.ic}--\eqref{5.mu2}. For every $0 \leq k \leq N_T$ we define the element $z^k_i = u^k_{1,i}+u^k_{2,i}$ for all $i\in \I$. Observe that $z^k_i$ is solution to
\begin{align*}
\frac{z^k_i-z^{k-1}_i}{\Delta t} + \Delta_\mathcal{T} (\mu^k z^k)_i = 0, \quad\text{where}\quad
\mu^k_i = \frac{\mu_{1,i}^k u_{1,i}^k + \mu_{2,i}^k u_{2,i}^k}{u_{1,i}^k+u_{2,i}^k}, \quad \forall i \in \I.
\end{align*}
Thanks to Theorem \ref{thm.SKTexi}, we have $u^k_{1,i}$, $u^k_{2,i} > 0$ for all $i\in\I$ and the element $\mu^k_i$ is well-defined. Besides, applying the discrete duality estimate established in Theorem \ref{thm.duality} we deduce the existence of a constant $C>0$ independent of $\Delta x$ such that
\begin{multline}\label{6.aux.dualesti}
\sum_{k=1}^{N_T} \Delta t \sum_{i \in \I} \Delta x \left(\mu^k_{1,i} u^k_{1,i} + \mu^k_{2,i} u^k_{2,i} \right) \left(u^k_{1,i} + u^k_{2,i} \right) \\ \leq  C \left(1+\sum_{k=1}^{N_T} \Delta t \sum_{i \in \I} \Delta x |\mu^k_i| \right) \left(\sum_{i \in \I} \Delta x |u^0_{1,i}|^2 + \sum_{i \in \I} \Delta x |u^0_{2,i}|^2 \right).
\end{multline}
Now we notice that
\begin{align*}
\sum_{k=1}^{N_T} \Delta t \sum_{i \in \I} \Delta x |\mu^k_i| \leq \sum_{k=1}^{N_T} \Delta t \sum_{i \in \I} \Delta x |\mu^k_{1,i}| + \sum_{k=1}^{N_T} \Delta t \sum_{i \in \I} \Delta x |\mu^k_{2,i}| = I_{9} + I_{10}.
\end{align*}
For $I_{9}$ we have
\begin{align*}
I_{9} &= \sum_{k=1}^{N_T} \Delta t \sum_{i \in \I} \Delta x \left(d_1 + d_{11} \sum_{j \in \I} \dx \sigma_{1,i-j} u^k_{1,j} + d_{12} \sum_{j \in \I} \Delta x \rho_{i-j} u^k_{2,j} \right)\\
&= \sum_{k=1}^{N_T} \Delta t \left( d_1 +d_{11} \sum_{j \in \I} \dx u^k_{1,j} \sum_{i \in \I} \dx \sigma_{1,i-j} + d_{12} \sum_{j \in \I} \Delta x u^k_{2,j} \sum_{i \in \I} \Delta x \rho_{i-j} \right).
\end{align*}
Thus, bearing in mind the mass conservation property \eqref{6.conser.mass} we obtain
\begin{align}\label{6.I11}
I_{9} \leq T(d_1 + d_{11} m^0_1 \, \|\sigma_1\|_{L^1(\Tor)} + d_{12} m^0_2 \, \|\rho\|_{L^1(\Tor)}),
\end{align}
and similarly
\begin{align}\label{6.I12}
I_{10} \leq T(d_2 + d_{22} m^0_2 \, \|\sigma_2\|_{L^1(\Tor)} + d_{21} m^0_1 \, \|\rho\|_{L^1(\Tor)}).
\end{align}
Collecting \eqref{6.aux.dualesti}--\eqref{6.I12} we conclude that point \emph{(ii)} of  Theorem~\ref{thm.prop}  holds.

\section{Convergence of the scheme}\label{sec.conv}

This section is dedicated to the proof of Theorem \ref{thm.convSKT}. In the following the subscript $m$ refer to the size $\eta_m = \max\lbrace \Delta x_m, \Delta t_m \rbrace$ of the family $(\D_m)$ of space-time discretizations of $Q_T$. We derive uniform in $m$ a priori estimates in subsection~\ref{sec.unif} in order to obtain compactness in $L^p(Q_T)$ of the sequences of constant by part reconstructions $(u_{j,m})_m$ for both species $j=1,2$. The compactness results are gathered in Section~\ref{subsec.compactness}. A keypoint is a discrete $L^1$ compactness result obtained in \cite[Lemma 9.2]{ABR11}. This result is the adaptation at the discrete level of a compactness lemma established by Kruzhkov in \cite{Kru69} (see also \cite{Re30}). Finally in Section~\ref{subsec.proof.conv}, we prove Theorem~\ref{thm.convSKT}.

\subsection{Uniform estimates}\label{sec.unif}

In this section we establish some uniform estimates w.r.t. $\Delta x$ and $\Delta t$ fulfilled by the solutions to the scheme \eqref{5.ic}--\eqref{5.mu2}. They rely on the entropy dissipation inequality \eqref{6.entopy.dissip} and the conservation of mass \eqref{6.conser.mass} of Theorem~\ref{thm.SKTexi}.

\begin{proposition}\label{prop.estim}
Let the assumptions of Theorem \ref{thm.SKTexi} hold. Then there exists a constant $C_1>0$ only depending on $d_{12}$, $d_{21}$, $m^0_1$, $m^0_2$  and $H(u^0_1,u^0_2)$ such that
\begin{align}\label{8.unif.BV}
\max_{k=1,\ldots,N_T} \|u^k_j\|_{L^1(\Tor)} + \left(d_j \sum_{k=1}^{N_T} \Delta t \, \|u^k_j\|^2_{1,1,\T} \right)^{\frac{1}{2}} \leq C_1, \quad \mbox{for }j=1,2.
\end{align}
Moreover, assuming that $d_1$ and $d_2$ are positive constants, there exists a constant $C_2 > 0$ only depending on $T$, $d_1$, $d_2$, $d_{11}$, $d_{12}$, $d_{21}$, $d_{22}$, $\|\sigma_1\|_{L^\infty(\Tor)}$, $\|\sigma_2\|_{L^\infty(\Tor)}$, $\|\rho\|_{L^\infty(\Tor)}$, $m^0_1$, $m^0_2$ and $H(u^0_1,u^0_2)$ such that
\begin{align}\label{8.unif.Flux}
\sum_{k=1}^{N_T} \Delta t \sum_{i \in \I} \Delta x \, \left|\F^k_{j,i+\frac{1}{2}} \right| \leq C_2, \quad \mbox{for }j=1,2,
\end{align}
where the numerical fluxes are defined by \eqref{5.sch.flux}.
\end{proposition}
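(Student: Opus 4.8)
The first estimate \eqref{8.unif.BV} is essentially a direct consequence of the entropy-dissipation inequality \eqref{6.entopy.dissip} together with mass conservation \eqref{6.conser.mass}. The plan is: first, sum \eqref{6.entopy.dissip} over $k=1,\dots,N_T$ and telescope to get $\Delta t\sum_{k}D(u_1^k,u_2^k)\leq H(u_1^0,u_2^0)$, using that $H\geq 0$. Among the (non-negative) terms of $D$ defined in \eqref{6.def.dissip}, keep only the diffusive contributions $4\frac{d_j}{d_{1+\ldots}}|\sqrt{u_j^k}|^2_{1,2,\T}$; this bounds $\sum_k\Delta t\,d_j|\sqrt{u_j^k}|^2_{1,2,\T}$ by a constant depending only on $d_{12},d_{21},H(u_1^0,u_2^0)$. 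Then, to pass from the gradient of $\sqrt{u_j^k}$ to the $W^{1,1}$-seminorm of $u_j^k$ itself, write the discrete chain-rule-type factorization $u^k_{j,i+1}-u^k_{j,i} = (\sqrt{u^k_{j,i+1}}-\sqrt{u^k_{j,i}})(\sqrt{u^k_{j,i+1}}+\sqrt{u^k_{j,i}})$ and apply Cauchy--Schwarz over $i\in\I$, so that
\[
|u^k_j|_{1,1,\T}\leq |\sqrt{u^k_j}|_{1,2,\T}\Big(2\Delta x\sum_{i\in\I}(u^k_{j,i+1}+u^k_{j,i})\Big)^{1/2}= |\sqrt{u^k_j}|_{1,2,\T}\,(4m^0_j)^{1/2},
\]
using mass conservation. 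Squaring, summing over $k$ with the weight $\Delta t$, and using the bound just obtained on $\sum_k\Delta t\,|\sqrt{u^k_j}|^2_{1,2,\T}$ yields control of $d_j\sum_k\Delta t\,|u^k_j|^2_{1,1,\T}$. Combined with $\|u^k_j\|_{L^1}=m^0_j$ (so $\|u^k_j\|_{1,1,\T}=|u^k_j|_{1,1,\T}+m^0_j$) this gives \eqref{8.unif.BV}; note the $d_j$ weight is needed since nothing is claimed when $d_j=0$.

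For the flux estimate \eqref{8.unif.Flux}, I would start from the splitting \eqref{5.sch.flux}:
\[
\F^k_{j,i+\frac12} = \mu^k_{j,i+\frac12}\frac{u^k_{j,i}-u^k_{j,i+1}}{\Delta x}+u^k_{j,i+\frac12}\frac{\mu^k_{j,i}-\mu^k_{j,i+1}}{\Delta x}.
\]
The quantity $\Delta x\sum_{k,i}\Delta t\,|\F^k_{j,i+\frac12}|$ should be bounded by splitting into these two pieces and applying Cauchy--Schwarz in $(k,i)$. For the first piece, $\Delta x\sum_{k,i}\Delta t\,\mu^k_{j,i+\frac12}|u^k_{j,i+1}-u^k_{j,i}|/\Delta x \leq \big(\sum_{k,i}\Delta t\Delta x\,\mu^k_{j,i+\frac12}\big)^{1/2}\big(\sum_{k,i}\Delta t\,\mu^k_{j,i+\frac12}(u^k_{j,i+1}-u^k_{j,i})^2/\Delta x\big)^{1/2}$. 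Here the second factor is exactly controlled by the duality estimate of Theorem \ref{thm.prop}(ii) / the Kolmogorov-equation bound of Proposition \ref{prop.estimate_kolmo} applied to $u_j$ (using $\mu^k_{j,i+\frac12}\geq d_j>0$, which is why positivity of $d_1,d_2$ is assumed here), after observing $\mu^k_{j,i+\frac12}(u^k_{j,i+1}-u^k_{j,i})^2\lesssim (\mu^k_{j,i}+\mu^k_{j,i+1})(u^k_{j,i+1}-u^k_{j,i})^2$; and the first factor is bounded since $\Delta x\sum_i\mu^k_{j,i}\leq d_j + d_{j\ell}m^0_\ell\|\sigma_j\|_{L^1}+\dots$ uniformly, exactly as in \eqref{6.I11}--\eqref{6.I12}. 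For the second piece, one uses $0\leq u^k_{j,i+\frac12}$, bounds $|\mu^k_{j,i+1}-\mu^k_{j,i}|$ by a discrete-convolution expression and hence by (a constant times) $\|\sigma_j\|_{L^\infty},\|\rho\|_{L^\infty}$ times $|u^k_1|_{1,1,\T}+|u^k_2|_{1,1,\T}$ (Young's inequality for the convolution, shifting the discrete difference onto $u$), then applies Cauchy--Schwarz in $(k,i)$ and invokes \eqref{8.unif.BV} together with the $\|u^k_j\|_{L^1}=m^0_j$ bound on $u^k_{j,i+\frac12}$.

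The main obstacle is bookkeeping rather than conceptual: carefully estimating the ``$u\,\nabla\mu$'' part of the flux, because $\mu^k_j$ is a discrete convolution of $u_1,u_2$ so its discrete gradient must be rewritten (by translating the difference operator through the convolution, using that $\sum_n\Delta x\,\sigma_{j,n}=\|\sigma_j\|_{L^1}$ and the $L^\infty$ bound on the kernels) in terms of $|u^k_1|_{1,1,\T}+|u^k_2|_{1,1,\T}$, which are then square-summable in time by \eqref{8.unif.BV}. One must also track that all constants depend only on the quantities listed in the statement; in particular the passage through Proposition \ref{prop.estimate_kolmo}/Theorem \ref{thm.duality} produces a factor $(1+\|\mu_j\|_{L^1(Q_T)}^{1/2})$, and $\|\mu_j\|_{L^1(Q_T)}$ is bounded by $T$ times a constant built from the $d$'s, the $m^0$'s and the $L^1$ (hence $L^\infty$) norms of the kernels, as in \eqref{6.I11}--\eqref{6.I12}.
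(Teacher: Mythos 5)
Your treatment of \eqref{8.unif.BV} and of the ``$u\,\nabla\mu$'' part of the flux coincides with the paper's proof: the factorization $u^k_{j,i+1}-u^k_{j,i}=(\sqrt{u^k_{j,i+1}}-\sqrt{u^k_{j,i}})(\sqrt{u^k_{j,i+1}}+\sqrt{u^k_{j,i}})$, Cauchy--Schwarz, mass conservation and the entropy dissipation give $d_j\sum_k\Delta t\,|u^k_j|^2_{1,1,\T}\leq d_{12}\,m^0_j\,H(u^0_1,u^0_2)$ (and similarly for $j=2$), while shifting the discrete difference through the convolution controls $|\mu^k_{j,i}-\mu^k_{j,i+1}|$ by $\|\sigma_j\|_{L^\infty(\Tor)}$, $\|\rho\|_{L^\infty(\Tor)}$ times $BV$ seminorms of $u^k_1,u^k_2$, which are square-summable in time by \eqref{8.unif.BV}.

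However, your estimate of the first piece $\mu^k_{j,i+\frac12}(u^k_{j,i}-u^k_{j,i+1})/\Delta x$ contains a genuine gap. Your Cauchy--Schwarz split requires a bound on $\sum_{k,i}\Delta t\,\mu^k_{j,i+\frac12}(u^k_{j,i+1}-u^k_{j,i})^2/\Delta x$, which you propose to extract from Theorem \ref{thm.prop}(ii) or Proposition \ref{prop.estimate_kolmo}. Neither applies here. The duality estimate controls the weighted $L^2$ norm of $u$ itself, i.e.\ quantities of the form $\sum_{k,i}\Delta t\,\Delta x\,\mu^k_i|u^k_i|^2$, not of its discrete gradient. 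Proposition \ref{prop.estimate_kolmo} does control the weighted gradient, but only under the time-step restriction $\Delta t<1/\max_k\|[\Delta_\mathcal{T}\mu^k]_+\|_{L^\infty(\Tor)}$ (which for merely $L^\infty$ kernels is not uniform as $\Delta x\to0$) and with a right-hand side proportional to $\|u^0_j\|^2_{L^2(\Tor)}$; Proposition \ref{prop.estim} assumes neither $u^0_j\in L^2(\Tor)$ nor any such condition, and its constant $C_2$ may not depend on $\|u^0_j\|_{L^2(\Tor)}$. The repair is elementary and is the route the paper takes: bound $\mu^k_{j,i+\frac12}$ uniformly in $L^\infty$ by $d_1+d_{11}\|\sigma_1\|_{L^\infty(\Tor)}m^0_1+d_{12}\|\rho\|_{L^\infty(\Tor)}m^0_2$ (for $j=1$), using the $L^\infty$ bound on the kernels and mass conservation, so that after the cancellation of $\Delta x$ with $1/\Delta x$ the first piece is at most this constant times $\sum_k\Delta t\,|u^k_j|_{1,1,\T}\leq T^{1/2}\bigl(\sum_k\Delta t\,|u^k_j|^2_{1,1,\T}\bigr)^{1/2}$, which is controlled by \eqref{8.unif.BV}.
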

\begin{proof}
The uniform $L^\infty(0,T;L^1(\Tor))$ estimate of the first term in the right hand side of \eqref{8.unif.BV} is a direct consequence of the conservation of mass \eqref{6.conser.mass}. Then, for the uniform discrete $L^2(0,T;W^{1,1}(\Tor))$ estimate, we first notice, for $j=1$ or $2$ and $k \in \lbrace 1,\ldots, N_T \rbrace$, that it holds
\begin{align*}
 |u_j^k|_{1,1,\T} = \sum_{i \in \I} \left|u^k_{j,i+1}-u^k_{j,i}\right| = \sum_{i \in \I} \left|\left(\sqrt{u^k_{j,i+1}}-\sqrt{u^k_{j,i}} \right) \, \left(\sqrt{u^k_{j,i+1}}+\sqrt{u^k_{j,i}} \right) \right|.
\end{align*}
Hence, the Cauchy-Schwarz inequality yields
\begin{align*}
 |u^k_j|_{1,1,\T} \leq \left|\sqrt{u^k_j}\right|_{1,2,\T} \, \left(\sum_{i \in \I} \Delta x \, \left(\sqrt{u^k_{j,i+1}}+\sqrt{u^k_{j,i}} \right)^2 \right)^{\frac{1}{2}}
\end{align*}
Since $(a+b)^2\leq2(a^2+b^2)$ and $\|u^k_j\|_{L^1(\Tor)} = \|u^0_j\|_{L^1(\Tor)}=m^0_j$ (conservation of mass), one has
\begin{align*}
|u^k_j|_{1,1,\T} \leq 2 \left(m^0_j\right)^{1/2} \, \left|\sqrt{u^k_j}\right|_{1,2,\T}.
\end{align*}
Therefore, applying the entropy inequality \eqref{6.entopy.dissip}, we get for the first species
\begin{align*}
d_1\sum_{k=1}^{N_T} \Delta t \, |u^k_1|^2_{1,1,\T} \leq 4 d_1 \, m^0_1\,\sum_{k=1}^{N_T} \Delta t \, \left|\sqrt{u^k_1}\right|^2_{1,2,\T} \leq d_{12} m^0_1 \, H(u^0_1,u^0_2),
\end{align*}
and the equivalent estimate holds for the second species. This yields the existence of $C_1$ such that \eqref{8.unif.BV} holds.

It remains to establish \eqref{8.unif.Flux}. In this purpose we will consider the case $j=1$. Then, using the definition \eqref{5.sch.flux} of the numerical fluxes, we estimate
\begin{align*}
\sum_{k=1}^{N_T} \Delta t \sum_{i \in \I} \Delta x \, \left|\F^k_{1,i+\frac{1}{2}} \right| \leq \sum_{k=1}^{N_T} \Delta t \sum_{i \in \I} \mu^k_{1,i+\frac{1}{2}} \, \left|u^k_{1,i} - u^k_{1,i+1}\right| &+ \sum_{k=1}^{N_T} \Delta t \sum_{i \in \I} u^k_{1,i+\frac{1}{2}} \, \left|\mu^k_{1,i}-\mu^k_{1,i+1} \right|\\
&=  I_{11} + I_{12}.
\end{align*}
For $I_{11}$, applying the regularity of the functions $\sigma_1$ and $\rho$ we have
\begin{align*}
 I_{11} &\leq \sum_{k=1}^{N_T} \Delta t \sum_{i \in \I} \left(d_1 + d_{11} \, \|\sigma_1\|_{L^{\infty}(\Tor)} \, \|u^k_1\|_{L^1(\Tor)} + d_{12} \, \|\rho\|_{L^{\infty}(\Tor)} \, \|u^k_2\|_{L^1(\Tor)} \right) \, \left|u^k_{1,i} - u^k_{1,i+1}\right|.
\end{align*}
Hence, using the conservativity of the scheme and \eqref{8.unif.BV}, we get
\begin{align}\label{8.I11}
 I_{11}\leq \left(d_1 + d_{11} m^0_1 \, \|\sigma_1\|_{L^{\infty}(\Tor)} + d_{12} m^0_2 \, \|\rho\|_{L^{\infty}(\Tor)} \right) \, T^{1/2} \, C_1.
 \end{align}
For $I_{12}$, using the definition of $\mu^k_{1,i}$ for $i \in \I$, we notice that it holds
\begin{multline*}
I_{12} \leq \frac{1}{2} \sum_{k=1}^{N_T} \Delta t \sum_{i \in \I} \Delta x \, \left(u^k_{1,i}+u^k_{1,i+1}\right) \\
\times \left(d_{11} \sum_{n \in \I} \sigma_{1,n} \, \left|u^k_{1,i-n}-u^k_{1,i+1-n}\right| + d_{12} \sum_{n \in \I} \rho_n \, \left|u^k_{2,i-n}-u^k_{2,i+1-n}\right| \right).
\end{multline*}
Then, thanks to the conservativity of the scheme, we obtain
\begin{align*}
I_{12} &\leq 2 m^0_1 \sum_{k=1}^{N_T} \Delta t   \,\left(d_{11} \, \|\sigma_1\|_{L^{\infty}(\Tor)} \, \left|u^k_1\right|_{1,1,\T} + d_{12} \, \|\rho\|_{L^{\infty}(\Tor)} \, \left|u^k_2\right|_{1,1,\T} \right). 
\end{align*}
Therefore, applying the Cauchy-Schwarz inequality and \eqref{8.unif.BV} we end up with
\begin{align}\label{8.I12}
I_{12} \leq 2 \,m^0_1\, T^{1/2} \, C_1 \, \left(\frac{d_{11}}{d_1^{1/2}}  \, \|\sigma_1\|_{L^{\infty}(\Tor)} + \frac{d_{12}}{d_2^{1/2}} \, \|\rho\|_{L^{\infty}(\Tor)} \right).
\end{align}
Collecting \eqref{8.I11} and \eqref{8.I12} and the corresponding inequalities for the second species lead to the existence of $C_2$ such that \eqref{8.unif.Flux} holds. This concludes the proof of Proposition \ref{prop.estim}.
\end{proof}

\subsection{Compactness properties}\label{subsec.compactness}
Let $(u_{1,m},u_{2,m})_{m \in \N}$ be a family, constructed in Theorem \ref{thm.SKTexi}, of finite volume solutions to \eqref{5.ic}--\eqref{5.mu2} associated to the sequence $(\D_m)$. In order to be able to apply \cite[Lemma 9.2]{ABR11}, the first task is to rewrite the scheme \eqref{5.ic}--\eqref{5.mu2} as the discretization of an evolution equation under divergence form. In this purpose we use the equivalent form of \eqref{5.sch} given by \eqref{5.sch.div}. In particular, for $j=1,2$ and $k=1,\ldots,N_T$, we associate to the family of fluxes $\left(\F^k_{j,i+1/2}\right)_{i \in \I}$ the following piecewise reconstruction
\begin{align*}
\F^k_{j,m} = \sum_{i \in \I_m} \F^k_{j,i+\frac{1}{2}} \, \mathbf{1}_{(x_i,x_{i+1})}.
\end{align*}
Then, for this discrete field $\F^k_{j,m}$ we define its $L^1$ norm as
\begin{align*}
\|\F^k_{j,m}\|_{L^1(\Tor)} = \sum_{i \in \I_m} \Delta x \, \left|\F^k_{j,i+\frac{1}{2}} \right|, 
\end{align*}
and its discrete divergence by
\begin{align*}
\mathrm{div}_\T \left(\F^k_{j,m}\right)_i = \frac{1}{\Delta x} \, \left( \F^k_{j,i+\frac{1}{2}} - \F^k_{j,i-\frac{1}{2}} \right), \quad i \in \I_m.
\end{align*}
This definition allows us to rewrite \eqref{5.sch.flux} as
\begin{align}\label{9.schm.div}
\frac{u^k_{j,i}-u^{k-1}_{j,i}}{\Delta t} + \mathrm{div}_\T \left(\F^k_{j,m}\right)_i = 0, \quad \forall i \in \I_m, \, j=1,2,
\end{align}
and we obtain the following result:
 
 \begin{proposition}\label{prop.conv.Lp}
Let the assumptions of Theorem \ref{thm.convSKT} hold and let $(u_{1,m},u_{2,m})_{m\in\N}$ be
a sequence of discrete solutions to \eqref{5.ic}--\eqref{5.mu2} constructed
in Theorem \ref{thm.SKTexi}. Then there exists a subsequence of $(u_{1,m},u_{2,m})$, which is not relabeled, and $(u_1,u_2)\in \left(L^p(Q_T)\right)^2$, with $p \in [1,3)$, such that
\begin{align*}
u_{j,m} \rightarrow u_j \quad \mbox{strongly in }L^p(Q_T) \mbox{ for }1 \leq p < 3, \quad \mbox{as }m \to \infty,
\end{align*}
and almost everywhere.
\end{proposition}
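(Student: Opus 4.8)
The plan is to deduce Proposition~\ref{prop.conv.Lp} from the discrete $L^1$ compactness result of \cite[Lemma 9.2]{ABR11} applied to each species $j=1,2$, using the uniform estimates of Proposition~\ref{prop.estim} as the required hypotheses. First I would record that the scheme is written under the discrete divergence form \eqref{9.schm.div}, so that the pair $(u_{j,m}, \F^k_{j,m})$ fits the abstract framework of \cite{ABR11}: the discrete time derivative of $u_{j,m}$ equals $-\mathrm{div}_\T(\F^k_{j,m})$. The compactness lemma of \cite{ABR11} then requires two things: (a) a uniform spatial $BV$-type bound on $u_{j,m}$ in some space-time averaged sense, and (b) a uniform $L^1(Q_T)$ bound on the numerical fluxes $\F^k_{j,m}$ controlling the discrete time translates. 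Both are exactly what \eqref{8.unif.BV} and \eqref{8.unif.Flux} provide: \eqref{8.unif.BV} gives $\sum_k \Delta t_m \,\|u^k_{j,m}\|_{1,1,\T_m} \leq C$ together with the uniform mass bound $\max_k \|u^k_{j,m}\|_{L^1(\Tor)} \leq C_1$, and \eqref{8.unif.Flux} gives $\sum_k \Delta t_m\, \|\F^k_{j,m}\|_{L^1(\Tor)} \leq C_2$ (here we use that $d_1,d_2>0$, as assumed in Theorem~\ref{thm.convSKT}). Feeding these into \cite[Lemma 9.2]{ABR11} yields, up to a subsequence, the existence of $u_j \in L^1(Q_T)$ with $u_{j,m} \to u_j$ strongly in $L^1(Q_T)$ and almost everywhere in $Q_T$; moreover $u_j \geq 0$ a.e. since each $u_{j,m}\geq 0$ by Theorem~\ref{thm.SKTexi}.

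The next step is to upgrade $L^1$ convergence to $L^p$ convergence for all $p\in[1,3)$. For this I would establish a uniform bound on $(u_{j,m})_m$ in $L^3(Q_T)$, or at least in $L^{3-\delta}(Q_T)$ for every $\delta>0$, which combined with the a.e. convergence and Vitali's convergence theorem gives strong $L^p(Q_T)$ convergence for $1\leq p<3$. The $L^3$ bound should come from interpolating the two estimates of Proposition~\ref{prop.estim}: the discrete $L^2(0,T;W^{1,1}(\Tor))$ bound on $u^k_j$ together with the $L^\infty(0,T;L^1(\Tor))$ mass bound. In dimension $d=1$ the discrete Gagliardo--Nirenberg--Sobolev inequality gives $\|w\|_{L^\infty(\Tor)} \lesssim \|w\|_{1,1,\T}$, hence $\|w\|_{L^3(\Tor)}^3 \leq \|w\|_{L^\infty(\Tor)}^2\,\|w\|_{L^1(\Tor)} \lesssim \|w\|_{1,1,\T}^2\, \|w\|_{L^1(\Tor)}$; integrating in time and using $\sum_k \Delta t_m\,\|u^k_j\|_{1,1,\T}^2 \leq C$ and $\|u^k_j\|_{L^1} = m^0_j$ yields $\sum_k \Delta t_m\, \|u^k_{j,m}\|_{L^3(\Tor)}^3 \leq C$, i.e.\ a uniform bound in $L^3(Q_T)$. (The exponent $3$ and the endpoint $d/(d-1)$ mentioned in the remark are exactly what this interpolation produces, with $3 = d/(d-1)\cdot d$ type scaling in $d=1$; in general dimension one replaces the Sobolev embedding accordingly, whence the $L^p$ range $p<d/(d-1)$ in Remark~\ref{rem.dimsupconv}.) Since $\{u_{j,m}\}$ is bounded in $L^3(Q_T)$ and converges a.e., the sequence is uniformly integrable in $L^p$ for every $p<3$, so Vitali's theorem promotes the $L^1$ convergence to $L^p$ convergence, and passing to a common subsequence for $j=1$ and $j=2$ finishes the proof.

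The main obstacle is the verification that the hypotheses of \cite[Lemma 9.2]{ABR11} are met in precisely the form stated there: that lemma is phrased for a scalar conservation-law-type discretization with a specific list of structural assumptions (uniform $L^1$ bound on the solution, uniform $L^1$ bound in space-time on the flux, and a weak $BV$ estimate), and one must check that the periodic TPFA setting here with the particular flux \eqref{5.sch.flux} falls within its scope. The weak $BV$ estimate is the delicate point: it is not literally \eqref{8.unif.BV} but a consequence of it, and one may need the additional observation that $u^k_{j,m}\in\mathcal{H}_{\T_m}\cap BV(\Tor)$ with $\|u^k_{j,m}\|_{BV(\Tor)} = \|u^k_{j,m}\|_{1,1,\T_m}$, together with the flux bound, to control space and time translates simultaneously. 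Once the translation estimates are in place the Kolmogorov--Riesz--Fréchet (Kruzhkov) argument of \cite{ABR11} applies verbatim. A secondary subtlety is ensuring the limit $u_j$ is genuinely in $L^p(Q_T)$ and not merely a measurable function: this is guaranteed by the $L^3$ bound combined with Fatou's lemma, which gives $u_j\in L^3(Q_T)\subset L^p(Q_T)$ for $p\in[1,3)$.
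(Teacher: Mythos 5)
Your argument is correct and follows essentially the same route as the paper: the divergence form \eqref{9.schm.div} together with the uniform bounds \eqref{8.unif.BV}--\eqref{8.unif.Flux} feeds \cite[Lemma 9.2]{ABR11} to give strong $L^1(Q_T)$ and a.e.\ convergence, and then the $L^\infty(0,T;L^1)$ and $L^2(0,T;BV)$ bounds combined with the one-dimensional embedding $BV(\Tor)\hookrightarrow L^\infty(\Tor)$ yield, by the interpolation $\|w\|_{L^3}^3\leq\|w\|_{L^\infty}^2\|w\|_{L^1}$, a uniform $L^3(Q_T)$ bound, after which Vitali's theorem gives strong $L^p(Q_T)$ convergence for $p\in[1,3)$. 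This is precisely the paper's proof, so no further comparison is needed.
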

\begin{proof}
A direct consequence of Proposition \ref{prop.estim} is that there is a constant $C$ independent of $\Delta x_m$ and $\Delta t_m$ such that
\begin{align*}
\sum_{k=1}^{N_T} \Delta t_m \, \|u^k_{j,m}\|_{L^1(\Tor)} + \sum_{k=1}^{N_T}  \Delta t_m \, \|\F^k_{j,m}\|_{L^1(\Tor)} + \sum_{k=1}^{N_T} \Delta t_m \, |u^k_{j,m}|_{1,1,\T_m} \leq C, \quad j=1,2.
\end{align*}
By \cite[Lemma 9.2]{ABR11}, which can be applied thanks to \eqref{9.schm.div}, there is a function $u_j \in L^1(Q_T)$, $j=1,2$, such that, up to a subsequence,
\begin{align*}
u_{j,m} \rightarrow u_j \quad \mbox{strongly in }L^1(Q_T) \mbox{ as }m \to \infty.
\end{align*}
Moreover, Proposition \ref{prop.estim} also implies that the sequence $(u_{i,m})$ is uniformly bounded in the space $L^\infty(0,T;L^1(\Tor))$ and in $L^2(0,T;BV(\Tor))$. The continuous embedding of $BV(\Tor)$ in $L^\infty(\Tor)$ (see \cite{AFP00}) implies that the sequence $(u_{j,m})$ is uniformly bounded in $L^2(0,T;L^\infty(\Tor))$. Hence, by interpolation, one has a uniform bound of $(u_{j,m})$ in $L^3(Q_T)$. Thus, Vitali's theorem gives the strong convergence of $(u_{j,m})$ towards $u_j$ in $L^p(Q_T)$ for all $p \in [1,3)$. This concludes the proof of Proposition \ref{prop.conv.Lp}.
\end{proof}

\begin{remark}\label{rem.dimsupconv}
In dimension $d\geq2$, we have the compact embedding of the space $BV(\Tor^d)$ in $L^{\frac{d}{d-1}}(\Tor^d)$. In particular in this case the sequence $(u_{j,m})$ is uniformly bounded in $L^{\frac{d}{d-1}}(Q_T)$. Therefore arguing as in the previous proof we deduce the existence for $j=1$ and $2$ of $u_j \in L^p(Q_T)$ for $p \in [1,{d}/{(d-1)})$, such that, up to a subsequence,
\begin{align*}
u_{j,m} \rightarrow u_j \quad \mbox{strongly in }L^p(Q_T) \mbox{ for }1 \leq p < {\frac{d}{d-1}}, \quad \mbox{as }m \to \infty.
\end{align*}
\end{remark}

\begin{corollary}\label{prop.conv.mu.LpLinf}
Let the assumptions of Proposition \ref{prop.conv.Lp} hold. Then there exists a subsequence of $(u_{1,m},u_{2,m})$, such that for any $p \in [1,3)$ one has 
\begin{equation*}
\begin{aligned}
\mu_{1,m} \rightarrow \mu_1 = d_1+d_{12}\rho_1\ast u_2 +d_{11}\sigma_1\ast u_1 \quad \mbox{strongly in }L^p(0,T; L^\infty(\Tor)), \quad \mbox{as }m\to \infty,\\
\mu_{2,m} \rightarrow \mu_2 = d_2+d_{21}\rho_2\ast u_1 +d_{22}\sigma_2\ast u_2  \quad \mbox{strongly in }L^p(0,T; L^\infty(\Tor)), \quad \mbox{as }m\to \infty,
\end{aligned}
\end{equation*}
where we recall that $\rho_1(x)=\rho_2(-x)=\rho(x)$ for a.e. $x \in \Tor$.
\end{corollary}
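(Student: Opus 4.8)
The plan is to deduce the convergence of $\mu_{j,m}$ directly from the strong $L^p(Q_T)$ convergence of $u_{j,m}$ obtained in Proposition \ref{prop.conv.Lp}, together with the uniform boundedness of the discretized convolution kernels. Recall that, by \eqref{5.mu1}--\eqref{5.approx.conv1}, for a.e. $(x,t)\in Q_T$ the reconstruction $\mu_{1,m}(x,t)$ is a piecewise constant approximation of $d_1 + d_{11}(\sigma_{1,m}\ast u_{1,m})(x,t) + d_{12}(\rho_{1,m}\ast u_{2,m})(x,t)$, where $\sigma_{1,m}$ and $\rho_{1,m}$ denote the piecewise constant reconstructions of the cell averages \eqref{5.approx.conv1}. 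Since $\sigma_1,\rho_1\in L^\infty(\Tor)$ by (H3), the reconstructed kernels satisfy $\|\sigma_{1,m}\|_{L^\infty}\leq\|\sigma_1\|_{L^\infty}$, $\|\rho_{1,m}\|_{L^\infty}\leq\|\rho_1\|_{L^\infty}$ and converge to $\sigma_1,\rho_1$ in every $L^q(\Tor)$, $q<\infty$ (by the Lebesgue differentiation theorem / standard properties of averaging operators).

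First I would fix $j=1$, the case $j=2$ being identical after exchanging the roles of $\rho_1$ and $\rho_2$ via $\rho_2(x)=\rho(-x)$. I would write the difference $\mu_{1,m}-\mu_1$ as a sum of two kinds of terms, for instance for the $\rho$-part
\[
\rho_{1,m}\ast u_{2,m} - \rho_1\ast u_2 = \rho_{1,m}\ast(u_{2,m}-u_2) + (\rho_{1,m}-\rho_1)\ast u_2,
\]
plus an additional commutator term accounting for the difference between the exact convolution $\rho_{1,m}\ast u_{2,m}$ and its piecewise-constant reconstruction appearing in the scheme (this last term is controlled by $\Delta x_m$ times the discrete $BV$ norm of $u_{2,m}$, hence goes to $0$ by \eqref{8.unif.BV}). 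For the first term, Young's convolution inequality gives, at a.e. time $t$, $\|\rho_{1,m}\ast(u_{2,m}-u_2)(\cdot,t)\|_{L^\infty(\Tor)}\leq\|\rho_1\|_{L^\infty(\Tor)}\|u_{2,m}(\cdot,t)-u_2(\cdot,t)\|_{L^1(\Tor)}$; raising to the $p$-th power, integrating in time and using Proposition \ref{prop.conv.Lp} (strong $L^p(Q_T)$, in particular $L^p(0,T;L^1(\Tor))$, convergence) handles this contribution. For the second term, $\|(\rho_{1,m}-\rho_1)\ast u_2(\cdot,t)\|_{L^\infty(\Tor)}\leq\|\rho_{1,m}-\rho_1\|_{L^{p'}(\Tor)}\|u_2(\cdot,t)\|_{L^p(\Tor)}$ with $1/p+1/p'=1$, and since $u_2\in L^p(Q_T)\subset L^p(0,T;L^p(\Tor))$ and $\|\rho_{1,m}-\rho_1\|_{L^{p'}(\Tor)}\to0$, dominated convergence in time closes the estimate; a symmetric argument treats the $\sigma_1$-part.

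The only real subtlety — and the step I would single out as the main obstacle — is bookkeeping the passage between the \emph{discrete} convolutions actually used in the scheme (sums over cells with cell-averaged kernels) and the \emph{continuous} convolutions of the reconstructed functions, so that Young's inequality applies cleanly. One must check that $\mu_{1,m}$, as an element of $\mathcal{H}_{\D_m}$, equals (up to the aforementioned $O(\Delta x_m)$ commutator) the reconstruction of $d_1+d_{11}\sigma_{1,m}\ast u_{1,m}+d_{12}\rho_{1,m}\ast u_{2,m}$; this is a direct computation using \eqref{5.approx.conv1} and the identity $\int_{K_i}(f\ast g) = \sum_n \int_{K_{i-n}} f \,\int_{K_n} g /\Delta x$ up to boundary corrections controlled by the modulus of continuity, i.e. by $\Delta x_m |u_{j,m}|_{1,1,\T_m}$ which is uniformly bounded by \eqref{8.unif.BV} and hence contributes $O(\Delta x_m)\to0$. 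Once this identification is in place, the three error terms above are each shown to tend to $0$ in $L^p(0,T;L^\infty(\Tor))$, and passing to a further subsequence is not even needed since the limit is already determined; I would simply keep the subsequence from Proposition \ref{prop.conv.Lp}. Finally, the explicit formula for the limit $\mu_1 = d_1+d_{12}\rho_1\ast u_2+d_{11}\sigma_1\ast u_1$ (and the analogous one for $\mu_2$, using $\rho_2(x)=\rho(-x)$) follows by identifying limits.
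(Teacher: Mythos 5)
Your proof is correct and rests on the same two pillars as the paper's argument: Young's inequality applied to a convolution of a bounded kernel with $u_{j,m}-u_j$, killed by the strong $L^p(Q_T)$ convergence of Proposition~\ref{prop.conv.Lp}, and a H\"older estimate for the kernel--discretization error, killed by an $L^{p/(p-1)}$ approximation property of the kernel. The organizational difference is that you route everything through the reconstructed kernel $\rho_{1,m}$, which produces three error terms, namely $\rho_{1,m}\ast(u_{2,m}-u_2)$, $(\rho_{1,m}-\rho_1)\ast u_2$ and a commutator controlled by $\Delta x_m\,|u_{2,m}|_{1,1,\T_m}$, whereas the paper uses the exact identity $\mu_{1,i}^k=d_1+d_{11}(\sigma_1\ast u_{1,m})(x_i,t)+d_{12}(\rho_1\ast u_{2,m})(x_i,t)$ (the change of variables $y\mapsto x_i-y$ maps $K_{i-n}$ onto $K_n$, so the discrete sum is \emph{exactly} the continuous convolution with the exact kernel evaluated at the cell center, with no reconstructed kernel and no commutator). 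This leaves the paper with only two terms, $\rho_1\ast(u_2-u_{2,m})$ and $\int_\Tor(\rho_1(x-y)-\rho_1(x_i-y))u_{2,m}(y)\,\mathrm{d}y$, the latter bounded by $\sup_{|z|\le\Delta x_m}\|\rho_1(z+\cdot)-\rho_1\|_{L^{p/(p-1)}(\Tor)}\,\|u_{2,m}(t)\|_{L^p(\Tor)}$ and the continuity of translation in $L^{p/(p-1)}$. Your variant is equally valid but slightly less economical: the commutator requires the extra ingredient \eqref{8.unif.BV} (hence $d_1,d_2>0$, which is anyway assumed here), and since that estimate is only $L^2$ in time you need to interpolate with the uniform bound $\Delta x_m|u^k_{2,m}|_{1,1,\T_m}\le 2m^0_2$ (from nonnegativity and mass conservation) to make it vanish in $L^p(0,T)$ for $2<p<3$. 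One small point to tidy up: for $p=1$ your bound on $(\rho_{1,m}-\rho_1)\ast u_2$ would require $\rho_{1,m}\to\rho_1$ in $L^\infty(\Tor)$, which fails for merely bounded kernels; this is harmless because convergence in $L^p(0,T;L^\infty(\Tor))$ for any $p\in(1,3)$ implies the case $p=1$ on the bounded time interval.
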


\begin{proof}
Observe that by definition \eqref{5.mu1}-\eqref{5.approx.conv1} one has for $x \in K_i$ and $t\in(0,T)$
\[
\begin{aligned}
\mu_1(x,t) - \mu_{1,m}(x,t) &= d_{12} \, (\rho_1\ast (u_2-u_{2,m}))(x,t) +d_{11} \, (\sigma_1\ast (u_1-u_{1,m}))(x,t)\\
&+ d_{12}\int_{\Tor}(\rho_1(x-y) - \rho_1(x_i-y)) \, u_{2,m}(y,t) \,\mathrm{d}y\\
&+ d_{11}\int_{\Tor}(\sigma_1(x-y) - \sigma_1(x_i-y)) \, u_{1,m}(y,t)\,\mathrm{d}y.\\
\end{aligned}
\]
Therefore by dominated convergence one obtains the strong convergence of $\mu_{1,m}$ towards $\mu_1$ in $L^1(Q_T)$ and almost everywhere in $Q_T$. Besides, for a.e. $t\in(0,T)$ thanks to Young's inequality and for $p\in(1,3)$, it holds
\begin{align*}
\|\rho_1 \ast (u_2-u_{2,m})(t)\|_{L^\infty(\Tor)} \leq \|\rho_1\|_{L^{\frac{p}{p-1}}(\Tor)} \, \|(u_2-u_{2,m})(t)\|_{L^p(\Tor)}.
\end{align*}
Then, applying Proposition \ref{prop.conv.Lp}, we obtain
\begin{align*}
\int_0^T \|\rho_1 \ast (u_2-u_{2,m})(t)\|^p_{L^\infty(\Tor)} \mathrm{d}t \leq \|\rho_1\|^p_{L^{\frac{p}{p-1}}(\Tor)} \, \int_0^T \|(u_2-u_{2,m})(t)\|^p_{L^p(\Tor)} \, \mathrm{d}t \to 0 \quad \mbox{as }m \to \infty.
\end{align*}
Let us now setting $\xi(x,y) = \rho_1(x-y) - \rho_1(x_i-y)$ for a.e. $x\in K_i$ and $y\in\Tor$. Hence, for a.e. $t \in (0,T)$, the H\"older inequality yields for $p\in(1,3)$
\[
\left\|\int_{\Tor}\xi(\cdot,y)u_{2,m}(t,y)\mathrm{d}y\right\|_{L^\infty(\Tor)} \leq \sup_{|z|\leq\dx_m}\|\rho_1(z+\cdot) - \rho_1\|_{L^{\frac{p}{p-1}}(\Tor)}\|u_{2,m}(t)\|_{L^p(\Tor)}.
\]
 The first factor in the right hand side tends to $0$ (by density of continuous functions in $L^{p/(p-1)}(\Tor)$) while, bearing in mind Proposition \ref{prop.conv.Lp}, the second factor is uniformly bounded in $L^p(Q_T)$. Therefore one can conclude the strong convergence in $L^p(0,T;L^\infty(\Tor))$ by using Young's inequality and the previous argument. This finishes the proof of Corollary \ref{prop.conv.mu.LpLinf}.
\end{proof}

\subsection{Proof of Theorem \ref{thm.convSKT}}\label{subsec.proof.conv}

It remains to prove that the functions $u_1$ and $u_2$ constructed in Section \ref{subsec.compactness} are distributional solutions to \eqref{4.SKT1}--\eqref{4.ic} in the sense of \eqref{weak1}. Let $\phi \in C^\infty_0(\Tor \times [0,T))$, we multiply equation \eqref{5.sch} by $\Delta t_m \dx_m \, \phi^{k-1}_i$, where $\phi^{k-1}_i = \phi(x_i, t^{k-1})$, and we sum over $i \in \I$ and $k \in \lbrace 1,\ldots,N_T \rbrace$. We obtain $
F^m_1 + F^m_2 = 0$ with
\begin{align*}
F^m_1 &= \sum_{k=1}^{N_T} \sum_{i \in \I} \dx_m \, (u^k_{1,i}-u^{k-1}_{1,i}) \phi^{k-1}_i= \sum_{k=1}^{N_T} \sum_{i \in \I} \dx_m \, u^{k}_{1,i}(\phi^{k-1}_i - \phi^{k}_i) - \sum_{i \in \I} \dx_m u^{0}_{1,i} \phi^{0}_{i},
\end{align*}
and
\begin{align*}
F^m_2 &= \sum_{k=1}^{N_T} \Delta t_m \sum_{i \in \I} \frac{-\mu^k_{1,i+1} u^k_{1,i+1} + 2 \mu^k_{1,i} u^k_{1,i} - \mu^k_{1,i-1} u^k_{1,i-1}}{\dx_m} \, \phi^{k-1}_i\\
&= \sum_{k=1}^{N_T} \Delta t_m \sum_{i \in \I} \frac{-\phi^{k-1}_{i+1} + 2 \phi^{k-1}_{i} - \phi^{k-1}_{i-1}}{\dx_m} \, \mu^k_{1,i} u^k_{1,i}.
\end{align*}
Let $\psi_m(x,t) = (\phi^{k}_i - \phi^{k-1}_i)/\Delta t_m$ for all $x\in K_i$ and $t\in(t_{k-1}, t_k]$ and $\phi_m^0(x) = \phi^{0}_i$ for all $x\in K_i$. Then, since $\psi_m \to\partial_t\phi $ in $L^\infty(Q_T)$ and $\phi^0_m\to \phi^0$ in $L^\infty(\Tor)$, so using the convergence results of Proposition~\ref{prop.conv.Lp} one obtains
\begin{align*}
&F^m_1 +\int_{Q_T} u_1 \partial_t \phi \, \mathrm{d}x \mathrm{d}t + \int_{\Tor} u^0_1(x) \phi(x,0) \, \mathrm{d}x\\
=& \int_{Q_T} (u_1 \partial_t \phi - u_{1,m}\psi_m)\, \mathrm{d}x \mathrm{d}t +  \int_{\Tor} (u^0_1 \phi(\cdot,0) - u^0_1\phi_m^0)\, \mathrm{d}x \to0\quad \mbox{as }m\to 0.
\end{align*}
Similarly, if one defines $\zeta_m(x,t)= (\phi^{k-1}_{i+1} - 2 \phi^{k-1}_{i} + \phi^{k-1}_{i-1})/(\dx_m)^2$ for all $x\in K_i$ and $t\in(t_{k-1}, t_k]$, then $\zeta_m\to \Delta \phi $ in $L^\infty(Q_T)$ and therefore using Proposition~\ref{prop.conv.Lp} and Corollary~\ref{prop.conv.mu.LpLinf} one obtains
\begin{align*}
&F^m_2 +\int_{Q_T} u_1\mu_1 \Delta\phi \, \mathrm{d}x \mathrm{d}t= \int_{Q_T} (u_1 \mu_1\Delta\phi - u_{1,m}\mu_{1,m}\zeta_m)\, \mathrm{d}x \mathrm{d}t\to0\quad \mbox{as }m\to 0.
\end{align*}
This concludes the proof of Theorem \ref{thm.convSKT}.

\section{Numerical experiments}\label{sec.num}
In this section, we perform several numerical experiments to illustrate the behavior of the scheme.

\subsection{Implementation}

The scheme was implemented in dimension $d=1$ and $d=2$ using Matlab. The code is available at \url{https://gitlab.inria.fr/herda/nonlocal-skt}. In order to optimize the computational cost, a number of matrices can be pre-assembled and stored using a sparse matrix structure. This is the case for the matrix of the Laplacian and those related to the convolution kernels. Moreover, the assembling can be performed efficiently using the discrete Fourier transform. At each time step the nonlinear system is solved using a Newton method. Convergence of the Newton method is reached when the $\ell^\infty$ norm of the residue divided by the norm of the first guess gets less than a given tolerance, which we took to be $10^{-10}$ in our experiments. An adaptive time step procedure is implemented in case the Newton method fails to converge. After maximum number of steps ($50$ in the experiments), if the target error is not attained, $\Delta t$ is divided by $2$. If there was refinement on a given time step,  $\Delta t$ is multiplied by two for the next time step. In the experiments below the Newton method never failed to converge and the time step remained constant along all the simulations.

\subsection{Test case 1: Convergence for various convolution kernels and initial data}\label{sec:testcase1}
In this first test case, we investigate the convergence of the scheme in the case for the following nonlocal cross-diffusion system
\begin{align*}
\partial_t u_1 - \partial_{xx}^2((\rho \ast u_2) u_1) &= 0,\\
\partial_t u_2 - 2\partial_{xx}^2((\rho \ast u_1) u_2) &= 0.
\end{align*}
The convolution kernel is taken to be either the Dirac measure, which we denote by $\rho_0$, either by an approximation of a Dirac
\begin{equation}\label{eq:rhodelt}
\rho_\delta(x) =\delta^{-1} \chi_{[-\delta/2,\delta/2]}(x),
\end{equation}
where $\chi_A$ indicator function of the set $A$, or the smooth kernel
\[
\rho_\text{smooth}(x) = \cos(\nu_Lx)+1.
\]
with $\nu_L = 2\pi/L$. We consider two initial data, either the indicator functions
\begin{equation}\label{eq:CI1}
u^0_1(x) = \chi_{[\frac L9, \frac L3]}(x)\,,\quad u^0_2(x) = \chi_{[\frac L3, \frac{3L}{4}]}(x)\,,\quad x\in\mathbb{R}/L\mathbb{Z}.
\end{equation}
or the smooth functions
\begin{equation}\label{eq:CI2}
u^0_1(x) = \cos\left(\nu_L x\right)+1\,,\quad u^0_2(x) = \sin\left(\nu_L x\right)+1\,,\quad x\in\mathbb{R}/L\mathbb{Z}.
\end{equation}
The final time of simulation is taken to be $T = 5$ and the domain has length $L=25$. We run the scheme for a sequence of decreasing space and time steps. More precisely the number of points is $N_k = 32\cdot2^{k-1}$ for $k=1,\dots,6$ and the corresponding time step $\Delta t_k = \Delta t_0\cdot4^{-(k-1)}$, with $\Delta t_0 = 5$. Observe that the refinement of the time step allows to witness experimental convergence in space up to second order accuracy if it is attained. As we do not know the analytical solution for this system, we take as reference solution the computed solution on the finest mesh ($N=1024$). Then the error for the $k$-th mesh is taken to be the $\ell^\infty$ norm between the $k$-th solution and the reference solution projected on the $k$-th mesh. From these errors the experimental order is evaluated by linear regression (in log scale). In Table~\ref{tab:convergence}, we report the experimental order of convergence and the error between the $N=512$ mesh and $N=1024$ mesh for each kernel and initial data.
\begin{table}[!h]
\begin{tabular}{l|r|r|r}
Convolution kernel:&Smooth $\rho_{\text{smooth}}$&Indicator $\rho_{L/4}$&Dirac $\rho_0$\\\hline\hline
Initial condition: &order: $1.97$&order: $1.53$ &order: $1.04$\\
indicator func. \eqref{eq:CI1}&error: $5\cdot10^{-3}$&error: $7\cdot10^{-2}$&error: $2.7\cdot10^{-3}$\\\hline
Initial condition: &order: $2.32$&order: $ 2.02$ &order: $2.32$\\
smooth func.  \eqref{eq:CI2}&error: $4.9\cdot10^{-4}$&error: $9.2\cdot10^{-4}$&error: $5\cdot10^{-4}$
\end{tabular}
\caption{Estimated order of convergence in space and absolute error at final time for the mesh $N=512$ in $L^\infty$ norm for various convolution kernels and initial data. Reference solution is for $N=1024$.}
\label{tab:convergence}
\end{table}

\subsection{Test case 2: From nonlocal to local cross-diffusion}\label{sec:testcase2} As a second test case, we investigate numerically the rate of convergence for different metrics of the so-called localization limit. Namely we study the rate of convergence of solutions of the nonlocal cross-diffusion system \eqref{4.SKT1}--\eqref{4.SKT2} towards solutions of its corresponding local version \eqref{4.local.SKT1}--\eqref{4.local.SKT2} as the convolution kernel tends to a Dirac measure. Indeed, if theoretically this localization limit has been proved in \cite{DM21,AM20}, the proofs rely on some compactness method and no explicit ``error'' bounds  are available (see also for instance \cite{JPZ22}). The establishment of such explicit estimates seems to be a complex task. In order to get a better understanding of this problem we aim to study this question thanks to our finite volume scheme.

More precisely, in this test case we consider the same system as in the first test case with $\rho = \rho_{\delta}$ for various values of $\delta\in[0,L]$. The domain has length $L=25$, the final time is $T=1$ and the mesh is such that $N = 1024$ and $\Delta t = 10^{-2}$. We evaluate the error at time $T$ between the solution $(u_1^{(\delta)}, u_2^{(\delta)})$ computed for the kernel $\rho_\delta$ and $(u_1^{(0)}, u_2^{(0)})$ computed for the local cross-diffusion system in Wasserstein-1 norm,
\[
W_1(u_1^{(\delta)}, u_1^{(0)}) + W_1(u_2^{(\delta)}, u_2^{(0)}),
\]
and in $L^p$ norms 
\[
\|u_1^{(\delta)}- u_1^{(0)}\|_{L^p} + \|u_2^{(\delta)}- u_2^{(0)}\|_{L^p},
\]
with $p=1$ or $p=\infty$. For the computation of the Wasserstein-1 norm we recall that in dimension $1$, if $f$ and $g$ are non-negative integrable functions on $\mathbb{R}$ with the same mass, one has $W_1(f, g) = \|F-G\|_{L^1}$, with $F'=f$, $G' =g$ and $F(-\infty) = G(-\infty)$. In practice, $f$ and $g$ are piecewise constant functions, thus the previous norm can be computed exactly numerically. On Figure~\ref{fig:localnonlocal}, we plot the error as a function of $\delta/L=4W_1(\rho_\delta, \rho_0)/L$ for the two initial data \eqref{eq:CI1} and \eqref{eq:CI2}.
\begin{figure}[h!]
  \centering
    \begin{tabular}{lcr}
    \begin{minipage}{.35\textwidth}
%
%
%
\begin{tikzpicture}

\begin{axis}[%
width=\textwidth,
scale only axis,
xmode=log,
xmin=0.001,
xmax=2,
xminorticks=true,
xlabel={$\delta/L$},
ymode=log,
ymin=0.01,
ymax=10,
yminorticks=true,
axis background/.style={fill=white},
legend style={at = {(1.0,0.0)}, anchor=south east, legend cell align=left, align=left, draw=white!15!black}
]
\addplot [color=red, dashed, mark=triangle, mark options={solid, red}]
  table[row sep=crcr]{%
1	3.23002740349045\\
0.5	2.54114578626428\\
0.4	2.13556278264275\\
0.3	1.62994743369678\\
0.2	1.08205256734307\\
0.1	0.597556521931737\\
0.05	0.427735833421858\\
0.04	0.382083711463199\\
0.03	0.322736374386223\\
0.02	0.248706983214396\\
0.01	0.164064432233567\\
0.005	0.102057279805887\\
0.003	0.0745733610728946\\
};
\addlegendentry{$W_1$ (order: $0.65$)}

\addplot [color=blue, dashed, mark=square, mark options={solid, blue}]
  table[row sep=crcr]{%
1	4.05319804246771\\
0.5	3.0502225890075\\
0.4	2.7534483929286\\
0.3	2.36650877805009\\
0.2	1.95331793256302\\
0.1	1.55330288785731\\
0.05	1.33694907023247\\
0.04	1.24910183674991\\
0.03	1.11586933285021\\
0.02	0.915577848065252\\
0.01	0.6911529631725\\
0.005	0.483569480376013\\
0.003	0.375539748971271\\
};
\addlegendentry{$L^1$ (order: $0.38$)}

\addplot [color=black, dashed, mark=diamond, mark options={solid, black}]
  table[row sep=crcr]{%
1	1.0922154014351\\
0.5	0.974826583203721\\
0.4	1.0399393540034\\
0.3	1.06483925440273\\
0.2	1.05138527105017\\
0.1	1.03923396554592\\
0.05	1.02940148982679\\
0.04	1.04755977464866\\
0.03	1.07040103139088\\
0.02	1.09293306452348\\
0.01	1.10725731750609\\
0.005	1.10376632763749\\
0.003	1.08764248877969\\
};
\addlegendentry{$L^\infty$}

\end{axis}
\end{tikzpicture}%
    \end{minipage}
    &\begin{minipage}{.03\textwidth}~\end{minipage}&
    \begin{minipage}{.35\textwidth}
%
%

%
\begin{tikzpicture}

\begin{axis}[%
width=\textwidth,
scale only axis,
xmode=log,
xmin=0.001,
xmax=2,
xminorticks=true,
xlabel={$\delta/L$},
ymode=log,
ymin=1e-6,
ymax=10,
yminorticks=true,
axis background/.style={fill=white},
legend style={at = {(1.0,0.0)}, anchor=south east, legend cell align=left, align=left, draw=white!15!black}
]
\addplot [color=red, dashed, mark=triangle, mark options={solid, red}]
  table[row sep=crcr]{%
1	17.36869633489\\
0.5	5.01782434611946\\
0.4	3.12438286330904\\
0.3	1.72643288535814\\
0.2	0.756032123487418\\
0.1	0.188157131993878\\
0.05	0.0459226722472736\\
0.04	0.0296581616728354\\
0.03	0.0169408069271463\\
0.02	0.00776230345867207\\
0.01	0.00211661549991365\\
0.005	0.000423300471877955\\
0.003	0.00014109889930773\\
};
\addlegendentry{$W_1$ (order: $2.01$)}

\addplot [color=blue, dashed, mark=square, mark options={solid, blue}]
  table[row sep=crcr]{%
1	4.53106620994319\\
0.5	1.46211826081622\\
0.4	0.940539190241608\\
0.3	0.536690541887217\\
0.2	0.243283675512452\\
0.1	0.0625753631515012\\
0.05	0.0154540384762746\\
0.04	0.00999614764469887\\
0.03	0.00571684168992667\\
0.02	0.00262190103930717\\
0.01	0.000715343110626223\\
0.005	0.000143085165811804\\
0.003	4.7695969690004e-05\\
};
\addlegendentry{$L^1$ (order: $1.98$)}

\addplot [color=black, dashed, mark=diamond, mark options={solid, black}]
  table[row sep=crcr]{%
1	0.394312773801331\\
0.5	0.128263593682307\\
0.4	0.0818146872499927\\
0.3	0.0460837671720641\\
0.2	0.0207525952941274\\
0.1	0.005374429973556\\
0.05	0.00138050133490772\\
0.04	0.000900583413454739\\
0.03	0.000518892119064224\\
0.02	0.000239393561033516\\
0.01	6.55729413716472e-05\\
0.005	1.31322153718999e-05\\
0.003	4.37839287863173e-06\\
};
\addlegendentry{$L^\infty$ (order: $1.97$)}

\end{axis}
\end{tikzpicture}%
    \end{minipage}
    \end{tabular}
\caption{Distance between solution of the nonlocal and local cross-diffusion system at final time versus $\delta/L$. Left: Initial data is indicator function \eqref{eq:CI1}; Right: Initial data is the smooth function \eqref{eq:CI2}}.
\label{fig:localnonlocal}
\end{figure}
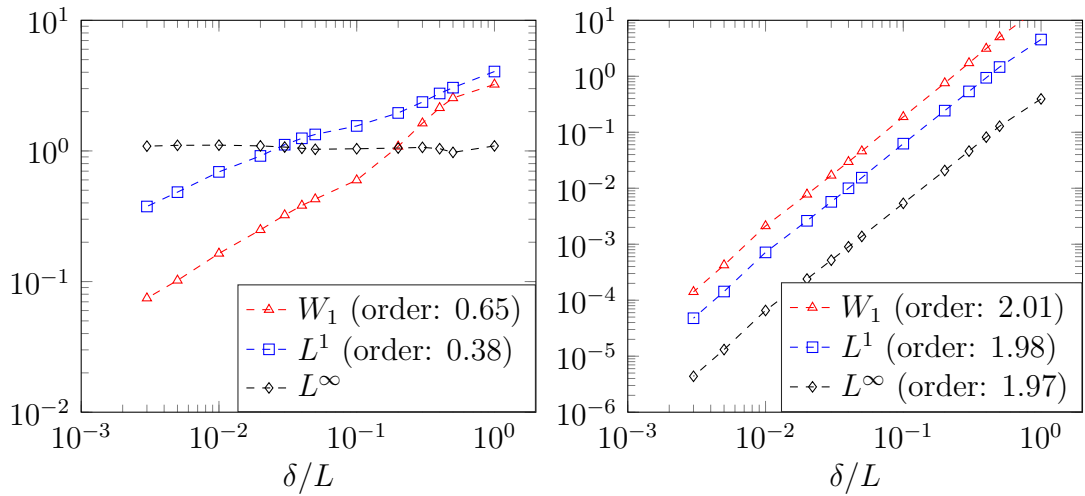
 For the smooth initial data \eqref{eq:CI2} supported on the whole domain (up to one point), we observe convergence with rate $O(W_1(\rho_\delta, \rho_0)^2)$ for all the norms. For the discontinuous initial data \eqref{eq:CI2} supported on part of the domain, there is no experimental convergence in $L^\infty$ norm, and $O(W_1(\rho_\delta, \rho_0)^\alpha)$ convergence with $\alpha \approx 0.38$ in $L^1$ norm and $\alpha = 0.65$ in Wasserstein-1 norm.
 
\subsection{Test case 3: Turing instabilities in prey-predator systems with nonlocal cross-diffusion}\label{sec:testcase3}
In this last test case, we consider the following system with nonlocal cross diffusion and reaction modelling a population of preys with density $u_1$ and predators with density $u_2$. The system reads
\begin{align*}
\partial_t u_1 - d_1\Delta u_1 &= R_1(u_1,u_2),\\
\partial_t u_2 - \Delta((d_2 + d_{21} \, \rho_2 \ast u_1) u_2) &= R_2(u_1,u_2).
\end{align*}
 The precise reaction terms  will be specified below. On the one hand, preys are subject to linear diffusion with constant diffusivity coefficient $d_1$. However, the predators diffuse depending on the presence or the absence of preys. More precisely, the convolution kernel $\rho_2$ is chosen such that it is close to $0$ near the origin and large away form the origin  (up to a given distance). This models the fact that predators need not seek for preys when they are available at their position, while they shall diffuse more rapidly if higher densities of preys are ahead. The reaction terms will be chosen following the phytoplankton-herbivore model of Segel and Levin \cite{levin1976hypothesis} and a variation of Mimura-Nishiura-Yamaguti \cite{mimura_1979}. In both cases, the particularities are an  autocatalytic effect on the phytoplankton's (preys) growth rate and a density-dependent mortality of herbivore (predators). In the case of linear diffusion, this model is famous for exhibiting diffusive instabilities \cite{levin1976hypothesis} around the homogenenous equilibrium. The corresponding Turing patterns have been invoked to justify the patchiness of phytoplankton's distribution in the oceans \cite{levin1976hypothesis}.  In \cite{levin1976hypothesis} Segel and Levin mention that in these models \emph{the assumption of passive diffusion is made for simplicity only; more complicated movement patterns can also lead to diffusive instability.} Here we propose a more complex description model of the behavior of predators thanks to non-local cross-diffusion. In the following, we illustrate numerically the persistence and the modification of Turing patterns in the presence of nonlocal cross-diffusion. 
\subsubsection{One dimensional case: Segel-Levin reaction term} We consider the one-dimensional case with the following reaction terms 
\begin{align*}
R_1(u_1,u_2)= au_1+eu_1^2-bu_1u_2,\quad R_2(u_1,u_2)= -du_2^2+cu_1u_2.
\end{align*}
where the parameters are $a=b=c=d=1$ and $e=\frac{1}{3}$. Concerning the diffusion we consider two cases. In the first case, both species are driven by linear diffusion with $d_1 = 0.05$ for preys and $d_2 = 2$ and without cross-diffusion $d_{21}=0$. In the second case the preys are driven by linear diffusion with $d_1 = 0.05$ and the predators by nonlocal cross-diffusion with $d_{21} = 1$ and the kernel
\[
\rho_2(x) = C_r[x^2\chi_{(-r,r)}(x) + (x-2r)^2\chi_{[r,2r)}(x) + (x+2r)^2\chi_{(-2r,-r]}(x)],
\]
with $C_r$ a normalizing constant such that $\int\rho_2 = 1$. This kernel vanishes at $0$, has support on $[-2r,2r]$ and is maximal at $x = \pm r$. It is designed to model the hunting behavior of predators which will diffuse if most of the preys are away from their position, with a detection radius equal to $r$ and a maximal distance of detection of $2r$. In both cases the simulation is performed on a domain of length $L = 25$ with $N = 500$ cells. The radius is taken to be  $r = 10L/49$. The final time of simulation $T=500$ and the time step is $\Delta t =0.1$. The initial data is taken as a small perturbation of the homogeneous equilibrium
\[
u^0_1(x) = \frac{ad}{bc-de} + \varepsilon\chi_{[L/3,L/9]}(x)\,,\quad u^0_2(x) = \frac{ac}{bc-de},
\]
with $\varepsilon = 10^{-2}$. With the chosen parameters, the homogeneous equilibrium is linearly unstable in both the linear diffusion and the nonlocal cross-diffusion cases. Numerically we observe the solution converges in time towards an heterogeneous equilibrium in both cases. On Figure~\ref{fig:turing1d}, we plot the densities of preys and predators at final time. The difference between the patterns in the two cases is illustrated.
\begin{figure}[!h]
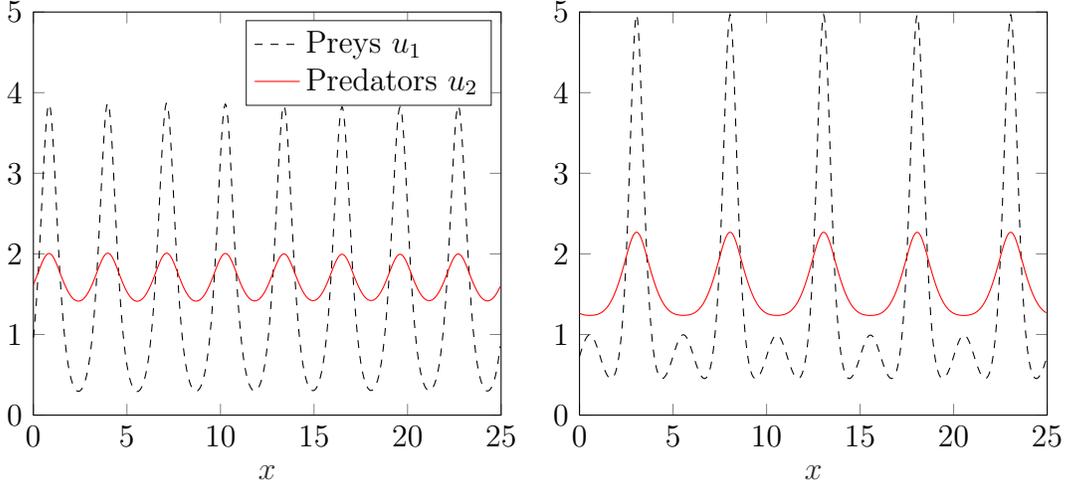

    \centering
    \begin{tabular}{lr}
    \begin{minipage}{.4\textwidth}
    \include{turing_lindiff_1d}
    \end{minipage}
    &
    \begin{minipage}{.4\textwidth}
    \include{turing_crossdiff_1d}
    \end{minipage}
    \end{tabular}
    \caption{Turing patterns at final time for (left) linear diffusion for predators and preys ($d_1 = 0.05$, $d_2 = 2$ and $d_{21} = 0$) and  (right) cross-diffusion for predators and linear diffusion for preys ($d_1 = 0.05$, $d_2 = 0$ and $d_{21} = 1$).}
    \label{fig:turing1d}
\end{figure}

\subsubsection{Two dimensional case: Mimura-Nishiura-Yamaguti reaction term}
Now we consider the two-dimensional case with the following reaction terms 
\begin{align*}
R_1(u_1,u_2)= au_1+eu_1^2-du_1^3-bu_1u_2, \quad R_2(u_1,u_2)= -fu_2-gu_2^2+cu_1u_2,
\end{align*}
with $a=35/9$, $b=c=f=1$, $d=1/9$, $e=16/9$ and $g=2/5$. For these reaction terms, the homogeneous equilibrium is given for the preys by 
\begin{align*}
[f + (c(b^2c^2 - 2bceg + 4dfbg + e^2g^2 + 4adg^2)^{1/2} - bc^2 + ceg - 2dfg)/(2dg)]/c = 5,
\end{align*}
and by 
\begin{align*}
    [c(b^2c^2 - 2bceg + 4dfbg + e^2g^2 + 4adg^2)^{1/2} - bc^2 + ceg - 2dfg]/(2dg^2) = 10,
\end{align*}    
for predators. Concerning the diffusion we consider three cases. In the first case, both species are driven by linear diffusion with $d_1 = 0.001$ for preys and $d_2 = 4$ and without cross-diffusion $d_{21}=0$. Similarly to the one dimensional test case, in the second case the preys are driven by linear diffusion with $d_1 = 0.001$ and the predators by nonlocal cross-diffusion with $d_{21} = 2/5$ and the kernel is the indicator function of an annulus 
\[
\rho_2^\text{sym}(x,y) = C\chi_{(3/8,1/2)}(x^2+y^2),
\]
with $C$ a normalizing constant such that $\int\rho_2^\text{sym} = 1$.  The third case is the same has the second case with linear diffusion for preys and nonlocal cross-diffusion for predators but the kernel is not symmetric and given by
\[
\rho_2^\text{nonsym}(x,y) = C\chi_{(3/8,1/2)}(x^2+y^2)\chi_{[0,\infty)}(x)\chi_{[0,\infty)}(y),
\]
with $C$ a normalizing constant such that $\int\rho_2^\text{nonsym} = 1$. In terms of modelling, it means that predators only sense preys that are north-east of their position (upper right quadrant). The final time of simulation $T=20$ and the time step is $\Delta t =0.01$. In any cases the simulation is performed on a domain of horizontal length $L_x = 4$ and vertical length $L_y = 3$ with $133\times100$ cells.  The initial data is taken as a small perturbation of the homogeneous equilibrium
\[
u^0_1(x,y) = 5 + \varepsilon\chi_{[L_x/9, 4L_x/9]\times[7L_y/9, 8L_y/9]}(x,y)\,,\quad u^0_2(x,y) = 10,
\]
and $\varepsilon = 10^{-2}$. Once again with the chosen parameters, the homogeneous equilibrium is linearly unstable in all cases and the solution converges in time towards an heterogeneous equilibrium. On Figure~\ref{fig:turing2d}, we plot the colormap density of preys at final time. The difference between the patterns in the three cases is illustrated. In the last case the patterns are consistent with the breaking of symmetry in the kernel $\rho_2^\text{nonsym}$.
\begin{figure}[!h]
    \centering
    \begin{tabular}{c}
    \begin{minipage}{\textwidth}
    \begin{tabular}{lr}
       \includegraphics[width = .45\textwidth]{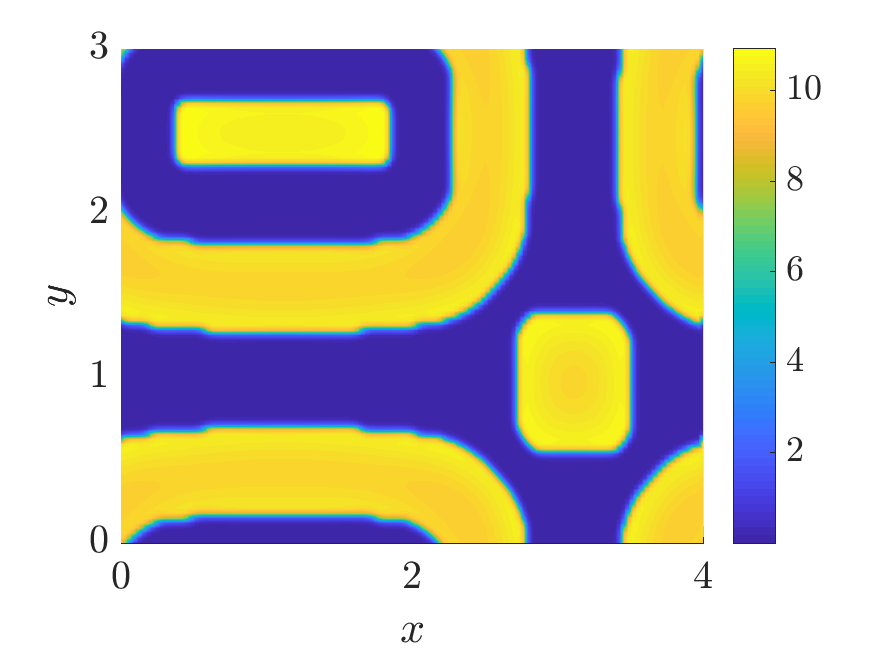}&
   \includegraphics[width = .45\textwidth]{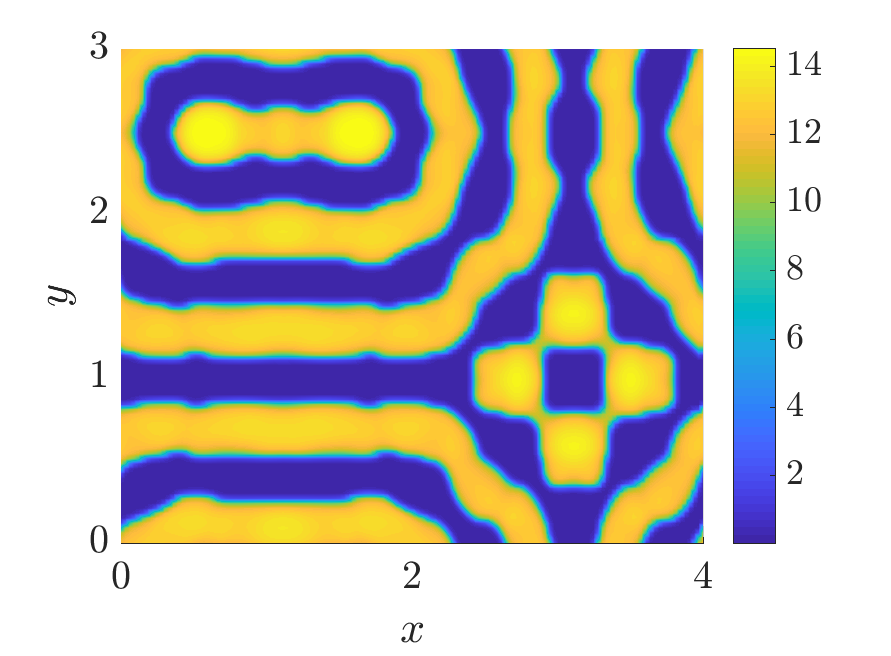}
   \end{tabular}
    \end{minipage}\\
    \begin{minipage}{\textwidth}
    \centering
   \includegraphics[width = .45\textwidth]{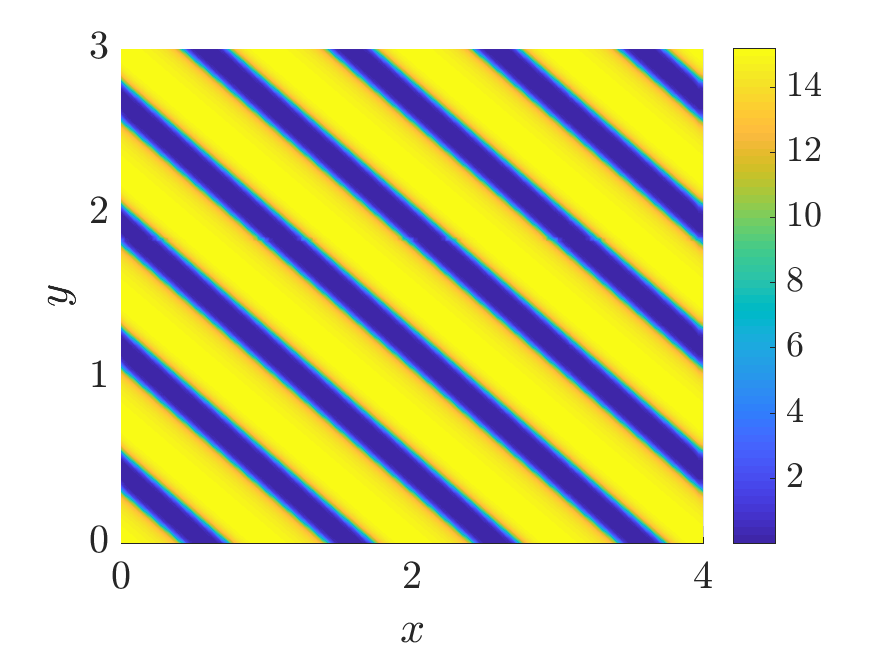}
   \end{minipage}
    \end{tabular}
    \caption{Turing patterns in prey density $u_1$ at final time: (top left) linear diffusion for predators and preys ($d_1 = 0.001$, $d_2 = 4$, $d_{21} = 0$);  (top right) nonlocal cross-diffusion for predators with symmetric kernel and linear diffusion for preys ($d_1 = 0.001$, $d_2 = 0$, $d_{21} = 2/5$, $\rho_2 = \rho_2^\text{sym}$); (bottom) nonlocal cross-diffusion for predators with non-symmetric kernel and linear diffusion for preys ($d_1 = 0.001$, $d_2 = 0$, $d_{21} = 2/5$, $\rho_2 = \rho_2^\text{nonsym}$).}
    \label{fig:turing2d}
\end{figure}

\subsection*{Acknowledgment} MH acknowledges support from the LabEx CEMPI (ANR-11-LABX-0007) and the ministries of Europe and Foreign Affairs (MEAE) and Higher Education, Research and Innovation (MESRI) through PHC Amadeus 46397PA. AZ acknowledges support from the multilateral project of the Austrian Agency
for International Co-operation in Education and Research (OeAD), grant FR 01/2021.

\bibliographystyle{plain}
\bibliography{bibli}

\end{document}